\let\savedegree\bigtimes
\let\bigtimes\relax
\let\bigtimes\savedegree
\setlist[enumerate]{labelsep=*, leftmargin=1.5pc}
\setlist[enumerate]{label=\normalfont(\roman*), ref=\roman*}
\newtheorem{thm}{Theorem}[section]
\newtheorem{lemma}[thm]{Lemma}
\newtheorem{cor}[thm]{Corollary}
\newtheorem{prop}[thm]{Proposition}
\theoremstyle{definition}
\newtheorem{example}[thm]{Example}
\newtheorem{remark}[thm]{Remark}
\newtheorem{definition}[thm]{Definition}
\newtheorem{definitions}[thm]{Definitions}
\newtheorem{question}[thm]{Question}
\numberwithin{equation}{section}
\newtheorem{conj}{Conjecture}
\newcommand{\Hilb}{\mathrm{Hilb}}
\newcommand{\Card}{\mathrm{Card}}
\newcommand{\Hom}{\mathrm{Hom}}
\newcommand{\Spec}{\mathrm{Spec}}
\newcommand{\Ext}{\mathrm{Ext}}
\newcommand{\gin}{\mathrm{gin}}
\newcommand{\GL}{\mathrm{GL}}
\newcommand{\soc}{\mathrm{soc}}
\newcommand{\Span}{\mathrm{Span}}
\newcommand{\ann}{\mathrm{ann}}
\renewcommand{\P}{\mathbf{P}}
\renewcommand{\AA}{\mathbf{A}}
\newcommand{\N}{\mathbf{N}}
\newcommand{\Z}{\mathbf{Z}}
\newcommand{\kk}{{\mathbf{k}}}
\newcommand{\mm}{\mathfrak{m}}
\newcommand{\lcm}{\mathrm{lcm}}
\newcommand{\ppn}{\mathrm{ppn}}
\newcommand{\pnp}{\mathrm{pnp}}
\newcommand{\npp}{\mathrm{npp}}
\newcommand{\pnn}{\mathrm{pnn}}
\newcommand{\npn}{\mathrm{npn}}
\newcommand{\nnp}{\mathrm{nnp}}
\newcommand{\p}{\mathrm{p}}
\newcommand{\n}{\mathrm{n}}
\newcommand{\bfx}{{\bf x}}
\begin{document}
\author[R.\,Ramkumar, A. \, Sammartano]{Ritvik~Ramkumar and Alessio~Sammartano}
\address{(Ritvik Ramkumar) Department of Mathematics\\University of California at Berkeley\\Berkeley, CA\\94720\\USA}
\email{ritvik@math.berkeley.edu}
\address[Alessio Sammartano]{Dipartimento di Matematica, Politecnico di Milano, Via Bonardi 9, 20133, Milano, Italy}
\email{alessio.sammartano@polimi.it}
\keywords{Brian\c{c}on-Iarrobino conjecture; Haiman theory; Borel-fixed point; monomial ideal; duality.}
\subjclass[2020]{Primary: 14C05. Secondary: 13D07,  05E40.}
\title{On the tangent space to the Hilbert scheme of points in $\P^3$ }
\begin{abstract}
In this paper we study the tangent space to the Hilbert scheme $\Hilb^d \P^3$, motivated  by Haiman's work on $\Hilb^d \P^2$ and by a long-standing conjecture of  Brian\c{c}on and Iarrobino on the most singular point in $\Hilb^d \P^n$.
For points  parametrizing monomial subschemes,
we consider  a decomposition of the tangent space 
into six distinguished subspaces,
and show that a fat point  exhibits an extremal behavior in this respect.
This decomposition is also used to characterize smooth monomial points on the Hilbert scheme.
We prove the first Brian\c{c}on-Iarrobino conjecture up to a factor of $\frac{4}{3}$, and 
 improve the known asymptotic bound on the dimension of $\Hilb^d \P^3$.
 Furthermore, we construct infinitely many counterexamples to the second Brian\c{c}on-Iarrobino conjecture, and we also settle a weaker conjecture of Sturmfels in the negative.
 
\end{abstract}

\maketitle

\section*{Introduction}

The Hilbert scheme of $d$ points in $\P^n$, denoted by $\Hilb^d \P^n$, parameterizing closed zero-dimensional subschemes of $\P^n$ of degree $d$, 
is a projective moduli space with very rich geometry and a plethora of open questions. 
It was constructed by Grothendieck \cite{Grothendieck} and shown to be connected by Hartshorne \cite{Hartshorne}.
In the case of $\P^2$, Fogarty \cite{Fogarty} proved that $\Hilb^d \P^2$ is non-singular of dimension $2d$, Ellingsurd and Str\o mme \cite{EllingsrudStromme} computed its homology, 
and Arcara, Bertram, Coskun, Huizenga \cite{ABCH} studied its birational geometry in great detail. 
It also has connections to other areas of mathematics, e.g. 
to algebraic combinatorics, where it plays a central role in Haiman's proof of the $n!$ Conjecture \cite{Haiman01}. 
By contrast, the Hilbert scheme is singular for $n \ge 3$ and very little is known about its geometry. 
The case of $\Hilb^d \P^3$ is of particular interest, 
since it lies at the boundary between the smooth cases $n\leq 2$ and the cases $n\geq 4$ which are believed to be wildly pathological  \cite{Jelisiejew}.
In fact,  $\Hilb^d \mathbf{P}^3$ is known to be rather special, as it admits a super-potential description -- it is the singular locus of a hypersurface on a smooth variety, cf. \cite{BBS}.
For   $d\leq 11$, $\Hilb^d \P^3$ is irreducible
\cite{DJNT}, 
and its general point parametrizes configurations of $d$  points in $\P^3$;
in particular, the Hilbert scheme has dimension $3d$.
However,
Iarrobino \cite{Iarrobino,Iarrobino2} proved that $\Hilb^d \P^3$  is reducible for $d \geq 78$.
In general, the  dimension of $\Hilb^d \P^3$ is unknown.
Basic questions about 
dimension of tangent spaces to  $\Hilb^d \P^3$ are also wide open. 
Over forty years ago, Brian\c{c}on and Iarrobino \cite{BrianconIarrobino} established an  upper bound for the dimension of $\Hilb^d \P^n$, 
and stated two conjectures regarding the largest possible dimension of its tangent spaces.

For an ideal $I$, denote by $T(I)$  the tangent space to the corresponding point  $[I]$ in the Hilbert scheme.
The question of finding the largest possible dimension of a tangent space to $\Hilb^d \P^n$ has been raised in many places,
including e.g.  \cite{BrianconIarrobino,Sturmfels,MillerSturmfels,aim}. 
To answer this question we  restrict to an affine open $\AA^n=\Spec \, \kk[x_1,\ldots, x_n] \subseteq \P^n$. 
It is natural to expect that  a fat point subscheme $V\big((x_1, \ldots, x_n)^r\big)\subseteq \AA^n$ yields the most singular point in its own  Hilbert scheme:

\begin{conj}[\cite{BrianconIarrobino}] \label{BIconj1} 
Let $S=\kk[x_1,\ldots, x_n]$, $\mm = (x_1,\ldots, x_n)$, and $d= {r+n-1 \choose n}$ with $r \in \N$. 
For all $[I] \in \Hilb^{d} \mathbf{A}^n$ we have
$\dim_\kk T(I) \leq \dim_\kk T(\mm^r).$
\end{conj}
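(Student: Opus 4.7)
The plan is to prove the conjecture in the case of main interest in this paper ($n=3$) by reducing to Borel-fixed monomial ideals and then bounding each of the six equivariant pieces of the tangent space separately against those of $\mm^r$.

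Step one. For any $[I] \in \Hilb^d \AA^3$ and a suitable term order, the generic initial ideal $\gin(I)$ is Borel-fixed and monomial, and arises as the flat limit in a one-parameter family starting from a generic $\GL_3$-translate of $I$. By upper semi-continuity of tangent space dimension along flat families in the Hilbert scheme, $\dim_\kk T(I) \leq \dim_\kk T(\gin(I))$, so it suffices to prove the inequality when $I$ is a Borel-fixed monomial ideal of colength $d$.

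Step two. For such $I$, the action of $(\kk^\times)^3$ on $T(I) = \Hom_S(I, S/I)$ decomposes it into weight spaces indexed by $\pp \in \Z^3$, which I group by sign pattern to obtain the six distinguished summands $T^\epsilon(I)$ with $\epsilon \in \{\ppn, \pnp, \npp, \pnn, \npn, \nnp\}$ highlighted in the abstract. Each $T^\epsilon(I)$ admits a combinatorial basis indexed by admissible monomial pairs $(m, m')$ with $m$ a minimal generator of $I$, $m' \notin I$, and $m'/m$ lying in the octant specified by $\epsilon$. The crux is to prove
\[
\dim_\kk T^\epsilon(I) \;\leq\; \dim_\kk T^\epsilon(\mm^r) \qquad \text{for every } \epsilon,
\]
summing which gives the conjecture. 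For each $\epsilon$ separately, I would construct an injective combinatorial map from the basis of $T^\epsilon(I)$ into that of $T^\epsilon(\mm^r)$, exploiting the Borel-fixed staircase inequalities together with the colength constraint $\dim_\kk (S/I) = \binom{r+2}{3}$ to rearrange the staircase of $I$ into a layer-by-layer comparison with the fat point staircase.

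Step three -- the main obstacle. The six pieces pair up into duals (for instance $\ppn$ with $\nnp$, $\pnp$ with $\npn$, and $\npp$ with $\pnn$), reflecting an asymmetry between the generating structure and the socle structure of a Borel-fixed ideal. The direct monomial injection in step two leaks by a bounded multiplicative factor on at least one such pair, producing only the weaker ``up to $\tfrac{4}{3}$'' bound that this paper establishes. Closing the gap to the exact bound appears to require a global duality, plausibly via Macaulay's inverse systems, identifying $T^\epsilon(I)$ with a piece of the tangent space at an apolar monomial ideal of the same colength, together with an averaging argument between the two equivalent descriptions. Making such a duality quantitatively tight enough to saturate at precisely $\dim_\kk T(\mm^r)$ is where the genuine difficulty lies, and is the reason the Brian\c{c}on-Iarrobino conjecture has remained open for over four decades.
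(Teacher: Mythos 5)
The statement you are asked to prove is Conjecture \ref{BIconj1}, which the paper does not prove and explicitly describes as open (``No progress on the conjecture has been made so far''); the paper only establishes partial results towards it. Your proposal, by its own admission in Step three, also does not prove it, so the honest verdict is that there is a genuine gap, and it sits exactly in Step two. You assert that for \emph{every} signature $\epsilon$ one can build an injective combinatorial map from a basis of $T^{\epsilon}(I)$ into a basis of $T^{\epsilon}(\mm^r)$. This is precisely the open content of the conjecture: the paper succeeds only for four of the six signatures ($\ppn$, $\pnp$, $\nnp$, $\npn$, via Proposition \ref{PropositionBoundPPN} combined with the duality of Theorem \ref{TheoremPairingSignatures}), and it explicitly remarks that the bounds obtainable from Proposition \ref{PropositionBoundPPN} are \emph{not} sharp enough to handle the remaining pair $\npp$ and $\pnn$, which the authors can only conjecture. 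Deferring the construction of the injection to ``exploiting the Borel-fixed staircase inequalities'' does not address why the method fails for those two signatures.

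Two further inaccuracies are worth flagging. First, your claimed basis of $T^{\epsilon}(I)$ indexed by pairs $(m,m')$ is Haiman's arrow description, which is correct for $n=2$ but not for $n=3$: by Proposition \ref{PropositionBasis} the graded piece $|T(I)|_{\alpha}$ has a basis indexed by \emph{bounded connected components} of $(\tilde{I}+\alpha)\setminus\tilde{I}$, and a single component may contain many lattice points, so a pairs-indexed set overcounts the dimension. Second, the $\tfrac{4}{3}$ factor in Theorem \ref{TheoremGlobalBound} does not arise from a ``leak'' in a signature-by-signature injection; it comes from an entirely different argument that slices $I$ along powers of $x$ into ideals $I_i\subseteq\kk[y,z]$ and applies a length identity for $\Hom_R(I,R/J)$ in a $2$-dimensional regular local ring (Proposition \ref{Propostion2RLR}). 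Your Step one (reduction to Borel-fixed ideals via $\gin$ and semicontinuity) and your identification of the dual pairs of signatures are both correct and match the paper.
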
 

No progress on the conjecture has been made so far.
By  degeneration arguments,  one reduces Conjecture \ref{BIconj1} to  monomial ideals $I$, and in fact to Borel-fixed ideals in characteristic 0.
Inspired by Haiman's theory of $\Hilb^d\AA^2$ \cite{Haiman98},
we decompose the  tangent space $T(I)$ to a monomial ideal $I \subseteq \kk[x_1,\dots,x_n]$ 
into subspaces defined in terms of $\mathbf{Z}^n$-graded directions, as follows.

\begin{definition}\label{DefinitionSubspaces} 
A {\bf signature} is a non-constant $n$-tuple on the two-element set  $\{\text{p},\text{n}\}$, where
$$
\text{p} = \text{``positive or 0''}, \quad \text{n} = \text{``negative''}.
$$
Let $\mathfrak{S}$ denote the set of signatures, and define 
for each $\mathfrak{s} \in \mathfrak{S}$
\begin{eqnarray*}
\Z^n_{\mathfrak{s}} &=& \big\{(\alpha_1, \ldots, \alpha_n) \in \Z^n: \alpha_{i} \geq 0 \, \text{ if } \, \mathfrak{s}_i = \text{p},  \, \alpha_{i} < 0 \,\, \text{ if } \, \mathfrak{s}_i = \text{n}\big\}\\
T_\mathfrak{s}(I) &=& \bigoplus_{\alpha \in \Z^n_{\mathfrak{s}}} \big|T(I)\big|_{\alpha} \subseteq T(I)
\end{eqnarray*}
where $ |T(I)|_{\alpha}$ denotes the graded component of $T(I)$ of degree $\alpha\in\Z^n$.
\end{definition}

We  then have $T_{\text{pp}\cdots\text{p}}(I)=T_{\text{nn}\cdots\text{n}}(I)=0$,
and therefore $T(I) = \bigoplus_{\mathfrak{s} \in \mathfrak{S}} T_{\mathfrak{s}}(I)$, 
cf. Proposition \ref{PropositionDecompositionDistinguishedSubspaces}. 
Our first theorem establishes a  symmetry between components of opposite signature.

\newtheorem*{thm:ppnnnp}{Theorem \ref{TheoremPairingSignatures}}
\begin{thm:ppnnnp} For any monomial point $[I]\in \Hilb^d \AA^3$ we have
\begin{eqnarray*}
\dim_\kk  T_{\ppn}(I) = \dim_\kk  T_{\nnp}(I) + d, \\
\dim_\kk  T_{\pnp}(I) = \dim_\kk  T_{\npn}(I) + d,  \\
\dim_\kk  T_{\npp}(I) = \dim_\kk  T_{\pnn}(I) + d.
\end{eqnarray*}
\end{thm:ppnnnp}
This result may be regarded as a generalization of Haiman's combinatorial proof of the smoothness of $\Hilb^d\P^2$ \cite{Haiman98}. 
In our notation, his proof  shows   that
\begin{equation}\label{EquationHaimanA2}
\dim_\kk  T_{\text{pn}}(I) = \dim_\kk  T_{\text{np}}(I) = d
\end{equation}
 for any monomial point $[I]\in \Hilb^d \AA^2$. 
Theorem \ref{TheoremPairingSignatures} extends \eqref{EquationHaimanA2} to $\AA^3$ in the sense that it implies
 $$
 \dim_\kk T_{\mathrm{pnp}}(I) + \dim_\kk T_{\mathrm{pnn}}(I) = \dim_\kk T_{\mathrm{npp}}(I) + \dim_\kk T_{\mathrm{npn}}(I)
 $$
  and two other similar equations.
Our result may be seen as further  evidence for the exceptionality of the Hilbert scheme of points in $\mathbf{P}^3$. 
For instance, it implies that $\dim_\kk  T(I)$ has the same parity as the length $d = \dim_\kk(S/I)$, a fact established in \cite{MNOP}
where it plays a crucial role in the calculation of Donaldson-Thomas theory for toric threefolds.
We are not aware of any such symmetry phenomenon in higher dimension.

As a special case,  Theorem \ref{TheoremPairingSignatures} provides a simple criterion for smoothness of  monomial points on the Hilbert scheme, in terms of the 
 subspaces $T_\mathfrak{s}(I)$.

\newtheorem*{thm:smoothchar}{Theorem \ref{TheoremSmoothnessCriterion}}
\begin{thm:smoothchar} A monomial point $[I] \in \Hilb^d \AA^3 $ is smooth  if and only if
$$T_\mathfrak{s}(I)=0
\qquad 
\text{for all }\mathfrak{s}\in \{\p\n\n, \n\p\n, \n\n\p\}.
$$
\end{thm:smoothchar}

In the opposite direction, we use the subspaces   $T_\mathfrak{s}(I)$  to provide  evidence in favor of Conjecture \ref{BIconj1}.
Clearly, Conjecture \ref{BIconj1} is implied by the  statement that $ \dim_\kk T_{\mathfrak{s}}(I) \leq  \dim_\kk T_{\mathfrak{s}}(\mm^r)$ for all $\mathfrak{s} \in \mathfrak{S}$ and all   Borel-fixed points $[I]$. 
For $\Hilb^d\AA^3$, we are able to establish this inequality for four out of the six signatures $\mathfrak{s}$. 
As a bonus, we characterize when equality holds.

\newtheorem*{thm:ppn}{Theorem \ref{TheoremExtremalSubspaces}}
\begin{thm:ppn} Let $d={r +2 \choose 3}$ and let $[I] \in \Hilb^d \mathbf{A}^3$ be  Borel-fixed, with $\mathrm{char}\, \kk = 0$. We have
\begin{eqnarray*}
\dim_\kk T_{\ppn}(I) \leq \dim_\kk T_{\ppn}(\mm^r), & \, & \dim_\kk  T_{\nnp}(I) \leq \dim_\kk T_{\nnp}(\mm^r),  \\
\dim_\kk  T_{\pnp}(I) \leq \dim_\kk T_{\pnp}(\mm^r), & \, & \dim_\kk  T_{\npn}(I) \leq \dim_\kk T_{\npn}(\mm^r). 
\end{eqnarray*}
Moreover, in each case equality occurs  if and only if $I = \mathfrak{m}^r$.
\end{thm:ppn}
 
We conjecture that $ \dim_\kk T_{\npp}(I) \leq  \dim_\kk T_{\npp}(\mm^r)$ and $ \dim_\kk T_{\pnn}(I) \leq  \dim_\kk T_{\pnn}(\mm^r)$ as well, but we are unable to prove this. 
However, we are able to prove  Conjecture \ref{BIconj1} up to a factor of $\frac{4}{3}$.
This also allows us  to improve the  asymptotic bound on the dimension of $\Hilb^d\P^3$, 
 a problem proposed by Sturmfels in
 \cite[Problem 2.4.c]{Sturmfels}.

\newtheorem*{thm:globalbound}{Theorem \ref{TheoremGlobalBound}}
\begin{thm:globalbound} 
For all $d \in \N$ and  $[I] \in \Hilb^d \P^3$ we have
\begin{equation*}
 \dim_\kk  T(I) \leq \frac{4}{3}\dim_\kk  T(\mm^r)  \approx 3.63\cdot d^{\frac{4}{3}} + O(d)
\end{equation*}
whenever  $ d \leq \binom{r+2}{3}$. 
In particular,  $\dim \Hilb^d \P^3 \leq 3.64\cdot d^{\frac{4}{3}}$ for $ d \gg 0$.
\end{thm:globalbound}

Note that Theorem \ref{TheoremGlobalBound} also holds for Hilbert schemes of points of arbitrary smooth threefolds, since these are \'etale-locally isomorphic to $\Hilb^d \P^3$, see for instance \cite[Lemma 4.4]{BehrendFantechi}.

Finally,
we address what makes $\Hilb^d \AA^3$ interesting  in the  case   $d={r+2 \choose 3}$.
It would be desirable to find a version  of Conjecture \ref{BIconj1} that holds for arbitrary $d\in \N$.
A natural guess comes  from the  point $[E(d)]\in \Hilb^d \AA^3$, where $E(d) \subseteq S$ denotes the unique lexsegment ideal such that 
$\dim_\kk\big(S/E(d)\big) = d $ and 
$\mm^{r+1} \subseteq E(d) \subseteq \mm^r$ for some $r$.
In the same paper, Brian\c{c}on and Iarrobino formulated the following extension of Conjecture \ref{BIconj1}:

\begin{conj}[\cite{BrianconIarrobino}] \label{BIconj2} 
Let   $d\in \N$.
For all $[I] \in \Hilb^{d} \mathbf{A}^n$ we have
$\dim_\kk T(I) \leq \dim_\kk T\big(E(d)\big).$
\end{conj}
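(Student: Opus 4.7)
The plan is to reduce the conjecture to a combinatorial statement about monomial ideals, and then bound each graded component of the tangent space separately, using the decomposition $T(I)=\bigoplus_{\mathfrak{s}\in\mathfrak{S}}T_\mathfrak{s}(I)$ introduced earlier.

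First, by upper semicontinuity of $\dim_\kk T(I)$ under flat families and by the standard Gr\"obner degeneration of any ideal $I\subseteq S$ to its generic initial ideal $\gin(I)$ with respect to a generic coordinate system, I would reduce to the case where $I$ is Borel-fixed (in characteristic zero), and to a monomial ideal more generally. Since $E(d)$ is itself Borel-fixed --- indeed, lex --- this is the natural class to compare against. I would then invoke the decomposition $T(I)=\bigoplus_{\mathfrak{s}\in\mathfrak{S}}T_\mathfrak{s}(I)$ and try to prove the signature-wise bound
\[
\dim_\kk T_\mathfrak{s}(I)\ \le\ \dim_\kk T_\mathfrak{s}\bigl(E(d)\bigr)
\qquad\text{for every } \mathfrak{s}\in\mathfrak{S}.
\]

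In the case $n=3$, Theorem \ref{TheoremPairingSignatures} gives three identities of the form ``opposite signatures differ by the constant $d$'' which hold simultaneously for $I$ and for $E(d)$. Consequently, the three pairs of opposite signatures $\{\ppn,\nnp\}$, $\{\pnp,\npn\}$, $\{\npp,\pnn\}$ contribute the same excess on both sides, and it suffices to prove the signature bound for a single representative of each pair --- halving the workload. For the first two pairs, my plan is to adapt the combinatorial arguments behind Theorem \ref{TheoremExtremalSubspaces}. There, one writes the relevant $T_\mathfrak{s}(I)$ as a direct sum of multigraded pieces of $\Hom_S(I,S/I)$ and bounds them by a counting argument over the monomials inside and outside $I$. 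The strategy here is to replace $\mm^r$ by $E(d)$ throughout: for Borel-fixed $I$ of colength $d$, compare $I$ to $E(d)$ via a sequence of elementary \emph{compression} moves that push $I$ monotonically toward the lex ideal without decreasing the dimension of the relevant signature component. Since $E(d)$ and the fat point $\mm^r$ (for the smallest $r$ with $\mm^{r+1}\subseteq E(d)$) differ only by an initial lex segment in degree $r$, one hopes that the same ``trade monomials along one variable'' estimates used in the fat-point proof carry over, with the lex-segment boundary providing the required monotonicity.

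The main obstacle is the pair $\{\npp,\pnn\}$: even in the fat-point case $I=\mm^r$ the authors explicitly flag that the analogous bound resists their method, and there is no reason to expect the general lex ideal $E(d)$ to be easier. Compounding this, for general $d$ the target $E(d)$ loses the symmetry under permutations of the variables enjoyed by $\mm^r$, so the three ``positive'' signatures must be treated independently rather than by a single symmetric argument, and it is precisely for the asymmetric signatures $\npp$ and $\pnn$ that the required compression must move monomials \emph{against} the preferred variable ordering of the lex term order. I expect this asymmetry to be the genuine difficulty: any successful proof would either require a substantially new technique for these two signatures, or a global bound on $\dim_\kk T(I)$ that bypasses the signature decomposition entirely --- perhaps sharpening the $\frac{4}{3}$-approximation of Theorem \ref{TheoremGlobalBound} to a direct comparison with $\dim_\kk T(E(d))$.
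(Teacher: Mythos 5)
The fundamental problem with your proposal is that the statement you are trying to prove is false, and the paper's role with respect to Conjecture \ref{BIconj2} is to \emph{disprove} it, not to prove it. Sturmfels had already exhibited counterexamples for $d=8$ and $d=16$ in $\AA^3$, and Theorem \ref{TheoremCounterexamples} of this paper produces an infinite family: for every $r\geq 3$ and $d=\binom{r+2}{3}+r+3$, the Borel-fixed ideal
$$
J \;=\; x^2(x,y,z)^{r-2} + xy(y,z)^{r-2} + (xz^{r}) + y(y,z)^{r} + (z^{r+2})
$$
satisfies $[J]\in\Hilb^d\AA^3$ and $\dim_\kk T(J)>\dim_\kk T\big(E(d)\big)$. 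The proof computes $\dim_\kk T\big(E(d)\big)=\big(\binom{r+2}{2}+1\big)\big(\binom{r+1}{2}+1\big)+7$ exactly (Proposition \ref{PropositionTangentSpaceLd}) by showing that all but $7$ of the basis tangent vectors of Proposition \ref{PropositionBasis} are socle maps, and then exhibits $10$ non-socle tangent vectors for $J$, whose socle contribution already matches that of $E(d)$. Consequently no proof strategy can succeed; in particular your proposed signature-wise bound $\dim_\kk T_{\mathfrak{s}}(I)\leq\dim_\kk T_{\mathfrak{s}}\big(E(d)\big)$ must already fail for at least one signature $\mathfrak{s}$ when $I=J$, since summing it over all six signatures would yield the false global inequality.

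Your individual reduction steps (semicontinuity, passage to $\gin$, the use of Theorem \ref{TheoremPairingSignatures} to halve the number of signatures to check) are sound, and your instinct that the loss of permutation symmetry of $E(d)$ is a genuine obstruction is well placed --- but the correct conclusion to draw from that obstruction is not that a new technique is needed; it is that the extremality of $E(d)$ simply does not persist beyond the fat-point lengths $d=\binom{r+2}{3}$, where $E(d)=\mm^r$. Conjecture \ref{BIconj1}, which compares against $\mm^r$ only at those special lengths, is the one that remains open and for which the partial results of Theorems \ref{TheoremExtremalSubspaces} and \ref{TheoremGlobalBound} are proved. The paper in fact goes further and shows in Proposition \ref{PropositionCounterexampleSturmfels} that even Sturmfels' weaker question --- whether the maximum tangent space dimension is attained at some initial ideal of a generic configuration of $d$ points --- has a negative answer for $d=39$.
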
 

However,  Conjecture \ref{BIconj2} turned out to be false.
Using a computer search, Sturmfels \cite{Sturmfels} disproved it for $n=3$  by exhibiting a counterexample for $d = 8$ and  one for $d =16$.
We show that these two counterexamples are not sporadic.

\newtheorem*{thm:counterex}{Theorem \ref{TheoremCounterexamples}}
\begin{thm:counterex} 
For each $r\geq 3$ there exist  $ {r+2 \choose 3} < d \leq  {r +3 \choose 3}$ 
and $[I] \in \Hilb^d\AA^3$ such that 
$$
\dim_\kk T\big(I\big) > \dim_\kk T\big(E(d)\big).
$$
\end{thm:counterex}

Attempting to salvage Conjecture \ref{BIconj2},
Sturmfels asked if, for each $d$, the largest tangent space dimension for $\Hilb^d\AA^3$
 is attained at an initial ideal of the generic configuration of $d$ points in $\AA^3$  \cite[Problem 2.4.a]{Sturmfels}.
 Such points are more special than the Borel-fixed ones, but include $E(d)$ as a particular example. 
However, we show in Proposition \ref{PropositionCounterexampleSturmfels} that Sturmfels' question has a negative answer, for $d=39$.

\medskip

\noindent{\bf Organization.}
In Section \ref{SectionTangentSpacePreliminaries} we introduce a combinatorial framework for the tangent space $T(I)$.
In Section \ref{SectionSymmetry} we explore the symmetries between the subspaces $T_\mathfrak{s}(I)$, for arbitrary monomial ideals $I$.
We prove Theorem \ref{TheoremPairingSignatures}, and,
as a corollary, we deduce the smoothness criterion of Theorem \ref{TheoremSmoothnessCriterion}.
In Section \ref{SectionExtremality} we investigate the subspaces $T_\mathfrak{s}(I)$ for Borel-fixed ideals $I$, 
and prove Theorem \ref{TheoremExtremalSubspaces}.
In Section \ref{SectionGlobalEstimates} we prove Theorem \ref{TheoremGlobalBound}, 
exploiting duality in 2-dimensional regular local rings.
We conclude the paper by  showing in Section \ref{SectionCounterexamples} that Conjecture \ref{BIconj2} fails infinitely often, and by exhibiting a counterexample  to Sturmfels' more general question.

\section{The tangent space}\label{SectionTangentSpacePreliminaries}

Let  $\kk$ denote an infinite field, $S = \kk[x_1,\ldots, x_n] $  the polynomial ring in $n$ variables, 
  $\mm= (x_1, 	\ldots, x_n)$  the ideal of the origin in $\AA^n= \Spec(S)$.
When $n\leq 3$, we typically denote the variables by $x,y,z$ instead of $x_1, x_2, x_3$.
We denote vectors by $\alpha = (\alpha_1, \ldots, \alpha_n) \in \Z^n$ and use the notation $\bfx^\alpha = x_1^{\alpha_1}\cdots x_n^{\alpha_n}$ for monomials of $S$.
If $V$ is a (multi)graded vector space,  we use the notation $|V|_{\alpha}$ to  denote the graded component  of $V$ of   degree $\alpha$.

The main object of interest is the Hilbert scheme $\Hilb^d\AA^n$ parametrizing 0-dimensional subschemes of $\AA^n$ of length $d$, 
equivalently ideals $I \subseteq S$ with $\dim_\kk(S/I) = d$.
We use the notation $[I]$ to denote the $\kk$-point in the Hilbert scheme corresponding to an ideal $I$.
The Zariski tangent space to a point $[I]\in \Hilb^d\AA^n$ may be identified with the $\kk$-vector space  \cite[18.29]{MillerSturmfels}
$$T(I) = \Hom_S(I, S/I).$$ 

We say that $[I]$ is a monomial point of the Hilbert scheme if $I$ is a monomial ideal, and likewise for other attributes of ideals, such as Borel-fixed or strongly stable.
The group $\GL_n$ acts on $S$ and $\Hilb^d\AA^n$ by change of coordinates.
The monomial points are exactly those fixed by  the torus $(\kk^*)^n \subseteq \GL_n$.
A point is {\bf Borel-fixed} if it is fixed by the Borel subgroup of $\GL_n$ consisting of upper triangular matrices;
note that Borel-fixed implies monomial.
The well-known generic initial ideal deformation allows to reduce questions such as Conjectures \ref{BIconj1} and \ref{BIconj2} to the case of Borel-fixed  points,
see \cite[15.9]{Eisenbud} or \cite[2.2--2.3]{MillerSturmfels} for details.

\begin{lemma}\label{LemmaGinReduction}
For every $[I] \in \Hilb^d\AA^n$ we have $\dim_\kk  T(I) \leq \dim_\kk  T(\gin\, I)$.
Moreover, $\gin\, I \subseteq S$ is Borel-fixed.
\end{lemma}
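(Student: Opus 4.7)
The plan is to combine two classical ingredients, both developed in detail in the references \cite[15.9]{Eisenbud} and \cite[\S2.2--2.3]{MillerSturmfels} indicated right above the statement: Galligo's theorem and upper semicontinuity of the tangent space dimension along a Gr\"obner degeneration.

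First, I would fix the reverse lexicographic monomial order and define $\gin(I)$ as usual, as the common value of $\text{in}(g\cdot I)$ as $g$ ranges over a suitable Zariski dense open subset $U\subseteq \GL_n$. To deduce the second assertion, I would invoke Galligo's theorem, which asserts that $\gin(I)$ is fixed by the Borel subgroup $B\subseteq \GL_n$ of upper triangular matrices; the standard proof chooses $U$ to be stable under left multiplication by $B$ and exploits the compatibility of $\text{in}(\cdot)$ with the $B$-action.

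For the inequality, the $\GL_n$-action on $\Hilb^d \AA^n$ is by automorphisms, so $T(I)$ is canonically isomorphic to $T(g\cdot I)$ for every $g\in \GL_n$; it therefore suffices to prove $\dim_\kk T(J) \leq \dim_\kk T\big(\text{in}(J)\big)$ for an arbitrary ideal $J$. I would achieve this via the standard weight-order Gr\"obner deformation: choose a weight vector $w\in \N^n$ that selects the initial term on a Gr\"obner basis of $J$, and form the $t$-homogenized family $\widetilde{J}\subseteq S[t]$. Its fiber over $t=0$ is $\text{in}(J)$ and its fibers over $t\neq 0$ are isomorphic to $J$, and flatness over $\kk[t]$ follows from the Buchberger criterion, so the family defines a morphism $\AA^1 \to \Hilb^d\AA^n$. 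Composing with the upper semicontinuous function $[J']\mapsto \dim_\kk \Hom_S(J', S/J')$ then yields the desired inequality.

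The main --- if mild --- technical point is the flatness of the Gr\"obner family, a classical but non-trivial consequence of the Buchberger criterion; the rest of the argument is formal, and the whole proof is essentially an extraction of the statement from the cited references.
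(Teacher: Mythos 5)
Your proposal is correct and is exactly the standard argument the paper is invoking: the paper gives no proof of this lemma, simply citing \cite[15.9]{Eisenbud} and \cite[2.2--2.3]{MillerSturmfels}, and the content of those references is precisely the combination of Galligo's theorem with the flatness of the weight-order Gr\"obner family and upper semicontinuity of $\dim_\kk \Hom_S(J',S/J')$ on $\Hilb^d\AA^n$ that you describe.
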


In characteristic 0, Borel-fixed ideals are characterized by an  exchange property.

\begin{lemma} \label{LemmaBorelExchangeChar0} 
Assume $\text{char} \, \kk =0$. 
A monomial ideal $I \subseteq S$ is Borel-fixed if and only if  it is {\bf strongly stable}, that is, for any monomial $\bfx^\alpha\in I$ with   $\alpha_j>0$ we have $\frac{x_i}{x_j}\bfx^\alpha \in I$ for all $i < j$.
\end{lemma}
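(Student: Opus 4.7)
The plan is to use the Levi-type decomposition of the Borel subgroup $B \subseteq \GL_n$ as the semidirect product of the diagonal torus $T = (\kk^*)^n$ and the unipotent radical $U$ consisting of upper-triangular matrices with $1$'s on the diagonal. Since a monomial ideal is automatically $T$-invariant, Borel-invariance reduces to $U$-invariance. The group $U$ is generated by the elementary one-parameter root subgroups $g_{ij}(t) = I_n + t E_{ij}$ with $i < j$ and $t \in \kk$, which act on $S$ by sending $x_j \mapsto x_j + t x_i$ and fixing all other variables. For a monomial $\bfx^\alpha$ with $\alpha_j > 0$, the binomial theorem produces the key identity
$$
g_{ij}(t) \cdot \bfx^\alpha \;=\; \sum_{\ell = 0}^{\alpha_j} \binom{\alpha_j}{\ell}\, t^\ell\, \frac{x_i^\ell}{x_j^\ell}\, \bfx^\alpha,
$$
whose terms on the right are all genuine monomials in $S$. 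This single identity will drive both implications.

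For the direction ``strongly stable $\Rightarrow$ Borel-fixed'', I would observe that by iterating the exchange property, every monomial $\frac{x_i^\ell}{x_j^\ell}\bfx^\alpha$ on the right-hand side lies in $I$, whence $g_{ij}(t) \cdot \bfx^\alpha \in I$ for all $t \in \kk$. Since $I$ is $\kk$-spanned by its monomials, it is $U$-invariant, and therefore Borel-fixed. Note that this direction does not use the characteristic hypothesis.

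For the converse, assume $I$ is Borel-fixed, so that $g_{ij}(t) \cdot \bfx^\alpha \in I$ for every $t \in \kk$. The characteristic-zero hypothesis enters in two essential ways. First, $\kk$ is infinite, so a polynomial in $t$ of bounded degree with coefficients in $S$ that lies in the $\kk$-subspace $I$ for every $t \in \kk$ must have each of its $S$-coefficients in $I$; this is a standard Vandermonde argument using finitely many specializations of $t$. Applied to the expansion above, this yields $\binom{\alpha_j}{\ell}\frac{x_i^\ell}{x_j^\ell}\bfx^\alpha \in I$ for each $\ell$. Second, the integer $\alpha_j$ is nonzero in $\kk$, so the $\ell = 1$ coefficient gives $\frac{x_i}{x_j}\bfx^\alpha \in I$, which is exactly the strongly stable condition. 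The only subtle point is the Vandermonde coefficient extraction; the remainder is mechanical bookkeeping, and the argument is classical, going back to Galligo.
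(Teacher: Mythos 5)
Your proof is correct and is the standard classical argument (due to Galligo, and reproduced in \cite[15.9]{Eisenbud} and \cite[2.2--2.3]{MillerSturmfels}); the paper itself states this lemma without proof, deferring to exactly those references. The decomposition $B = T \ltimes U$, the reduction to the root subgroups $g_{ij}(t)$, the binomial expansion, and the Vandermonde extraction of coefficients over an infinite field are all as in the cited sources, and you correctly isolate where characteristic zero is used (nonvanishing of $\binom{\alpha_j}{1}$ in $\kk$) versus where only infinitude of $\kk$ is needed.
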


For a monomial point $[I] \in \Hilb^d\AA^n$ the tangent space $T(I)$ inherits  a natural $\Z^n$-grading. 
Our next goal is to describe a combinatorial interpretation of $T(I)$  in terms of regions in $\Z^n$.

\begin{definitions}
For a monomial ideal $I$, we define $\tilde{I} \subseteq \N^n$ to be the subset consisting of the exponent vectors of all monomials in $I$.

A \textbf{path} between  $\alpha,\beta \in \Z^n$ is a sequence 
$\alpha=\gamma^{(0)}, \gamma^{(1)},\ldots,\gamma^{(m-1)}, \gamma^{(m)}=\beta$ of points of $\Z^n$ 
such that  $ \Vert \gamma^{(i+1)}-\gamma^{(i)} \Vert =1$ for all $i$,
where  $\Vert \delta\Vert=\sum_{j=1}^n|\delta_j|$ denotes the $1-$norm in $\Z^n$ .

A subset $U \subseteq \Z^n$ is said to be \textbf{connected} if it is non-empty and for any two points  $\alpha,\beta \in U$ there is a path between them contained in $U$.
Given a subset $V \subseteq \Z^n$, a maximal connected subset $U\subseteq V$ is called a \textbf{connected component}.

A  subset $U\subseteq \Z^n$ is  \textbf{bounded} if $\Card(U) <\infty.$
\end{definitions}

\begin{remark}
Let $[I] \in \Hilb^d\AA^n$ and $\alpha\in \Z^n$.
A  connected component $U$ of $ (\tilde{I}+{\alpha}) \setminus \, \tilde{I}$ is  bounded if and only if $ U \subseteq \N^n$.
The condition is sufficient  as $\Card(\N^n\setminus 	\tilde{I}) <\infty$, 
and necessary since if $\beta \in U $ with $\beta_i<0$,
then $\beta+m\mathbf{e}_j \in U$ for all $m\in \N$ and $j \ne i$, where $\mathbf{e}_j\in\N^{n}$ is the $j$-th basis vector.
\end{remark}

\begin{prop} \label{PropositionBasis} Let $\alpha \in \Z^n$ and $[I]\in \Hilb^d\AA^n$. 
The set of bounded connected components of $(\tilde{I}+{\alpha}) \setminus \, \tilde{I}$  corresponds  to a basis of $|T(I)|_\alpha$.
\end{prop}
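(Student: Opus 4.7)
The plan is to describe $|T(I)|_\alpha$ explicitly as the space of $\kk$-valued functions on $W := (\tilde{I}+\alpha)\setminus\tilde{I}$ that vanish outside $\N^n$ and are constant on connected components; the conclusion will then follow from the preceding Remark, which identifies the bounded components of $W$ with those contained in $\N^n$.

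First, I would encode $\phi\in|T(I)|_\alpha$ by scalars as follows. For each $\beta\in\tilde{I}$, the image $\phi(\bfx^{\beta})$ lies in $(S/I)_{\beta+\alpha}$, which is spanned by $\bfx^{\beta+\alpha}$ if $\beta+\alpha\in\N^n\setminus\tilde{I}$ and is zero otherwise. Writing $\phi(\bfx^{\beta}) = c_\beta\bfx^{\beta+\alpha}$ and reindexing by $\gamma := \beta+\alpha$, one sets $d_\gamma := c_{\gamma-\alpha}$ for $\gamma\in\tilde{I}+\alpha$; this forces $d_\gamma = 0$ unless $\gamma\in W\cap\N^n$, so $d$ is naturally a function on $W$ that vanishes off $\N^n$.

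Next I translate $S$-linearity: the identity $\phi(x_i\bfx^{\beta}) = x_i\phi(\bfx^{\beta})$ in $S/I$ reads $d_{\gamma+\mathbf{e}_i}\bfx^{\gamma+\mathbf{e}_i} = d_\gamma\bfx^{\gamma+\mathbf{e}_i}$ with $\gamma := \beta+\alpha$. This is nontrivial precisely when $\gamma+\mathbf{e}_i\in\N^n\setminus\tilde{I}$, in which case it forces $d_\gamma = d_{\gamma+\mathbf{e}_i}$. Using that $\tilde{I}$ is closed under adding $\mathbf{e}_i$, a short case analysis shows that the collection of these relations amounts to requiring $d_\gamma = d_{\gamma+\mathbf{e}_i}$ whenever both $\gamma$ and $\gamma+\mathbf{e}_i$ lie in $W$; hence $d$ must be constant on each connected component of $W$.

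Combining the constancy with the forced vanishing on $W\setminus\N^n$, the function $d$ is identically zero on any component of $W$ meeting $\Z^n\setminus\N^n$, and is an arbitrary scalar on the remaining components. By the Remark, the latter are precisely the bounded components of $W$, and their indicator functions give the desired basis of $|T(I)|_\alpha$. The main point to verify carefully is the case analysis matching each $S$-linearity relation to a unit-step edge inside $W$; beyond this bookkeeping the argument is straightforward linear algebra.
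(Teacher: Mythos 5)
Your proof is correct and follows essentially the same route as the paper's: both identify $|T(I)|_\alpha$ with functions on $(\tilde{I}+\alpha)\setminus\tilde{I}$ that are constant on connected components and vanish on the unbounded (i.e.\ non-$\N^n$) ones, with the unit-step translation of $S$-linearity playing the role of the paper's connectivity argument. The reduction of $S$-linearity to the relations $\phi(x_i\bfx^\beta)=x_i\phi(\bfx^\beta)$ and the use of the preceding Remark are exactly as in the paper, so there is nothing to flag.
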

\begin{proof} 
For each 
 bounded connected component  $U \subseteq (\tilde{I}+{\alpha}) \setminus \, \tilde{I}$ 
we define a multigraded $\kk-$linear map $\varphi_U: I \rightarrow S/I$ by setting $\varphi_U(\bfx^\beta) = \bfx^{\alpha+\beta}\in S/I$ if ${\alpha+\beta}\in U$, 0 otherwise.
We claim that $\varphi_U$ is $S$-linear;
it suffices to check the equation   $\phi(\bfx^\beta \bfx^\gamma) = \bfx^\beta\phi(\bfx^\gamma)$ in $S/I$ for all $\beta \in \N^n, \gamma \in \tilde{I}$.
This is clearly true if $\alpha+\beta+\gamma \in \tilde{I}$.
If $\alpha+\beta+\gamma \notin \tilde{I}$, observe that $\alpha+\beta+\gamma \in U$ if and only if $\alpha+\gamma \in U$, since the two points are connected in 
$(\tilde{I}+{\alpha}) \setminus \, \tilde{I}$, thus the equation holds and $\varphi_U \in |T(I)|_\alpha$.
We have $\mathrm{Image}(\varphi_U)= \Span_\kk(\bfx^\alpha \, : \, \alpha \in U) \subseteq S/I$, hence all  maps $\varphi_U$ are linearly independent.  

Finally, let $\psi \in  |T(I)|_\alpha$ be any  map.
If $ \beta, \gamma \in \tilde{I}$ are such that $\alpha+\beta, \alpha + \gamma $ lie in the same connected component $U \subseteq (\tilde{I}+{\alpha}) \setminus \, \tilde{I}$,
then there exists $c_{\psi,U} \in \kk$ such that $\psi( \bfx^\beta) = c_{\psi,U} \bfx^{\alpha+\beta}$ and $\psi( \bfx^\gamma) = c_{\psi,U} \bfx^{\alpha+\gamma}$:
this claim follows easily by induction on $\Vert \beta - \gamma \Vert$.
In particular, $c_{\psi,U}=0$ if $U$ is unbounded.
We deduce that $\psi = \sum_U c_{\psi,U} \varphi_U$, concluding the proof.
\end{proof}

\begin{example}
Let $I= (x_1^2,x_2^2,x_2x_3, x_3^2) \subseteq S = \kk[x_1,x_2,x_3]$ and $\alpha = (0,-1,0)$.
We illustrate the regions of Proposition \ref{PropositionBasis}:

\begin{center}
\begin{tikzpicture}[scale=1,tdplot_rotated_coords,
                    rotated axis/.style={->,purple,ultra thick},
                    blackBall/.style={ball color = black!80},
                    borderBall/.style={ball color = white,opacity=.25}, 
                    very thick]

\foreach \x in {0,1,2}{\foreach \z in {0,1,2}{ \foreach \y in {2}{ \filldraw[black] (\x,\y,\z) circle (0.35mm);}}}

\foreach \x in {0,1,2}{\foreach \z in {1,2}{  \filldraw[black] (\x,1,\z) circle (0.35mm);}}

\foreach \x in {0,1,2}{\foreach \z in {2}{  \filldraw[black] (\x,0,\z) circle (0.35mm);}}

\foreach \x in {2}{\foreach \z in {0,1,2}{ \foreach \y in {0,1,2}{ \filldraw[black] (\x,\y,\z) circle (0.35mm);}}}

\foreach \z in {0,1,2}{ \foreach \y in {2}{ 
\draw[thin] (0,\y,\z)--(2,\y,\z);
\draw[thin] (2,\y,\z)--(2.4,\y,\z);}}

\foreach \z in {0,1}{ \foreach \y in {0,1}{ 
\draw[thin] (2,\y,\z)--(2,\y,\z);
\draw[thin] (2,\y,\z)--(2.5,\y,\z);}}

\foreach \z in {1,2}{
\draw[thin] (0,1,\z)--(2,1,\z);
\draw[thin] (2,1,\z)--(2.5,1,\z);}

\foreach \z in {2}{
\draw[thin] (0,0,\z)--(2,0,\z);
\draw[thin] (2,0,\z)--(2.5,0,\z);}

\foreach \x in {2}{\foreach \z in {0,1,2}{  
\draw[thin] (\x,0,\z)--(\x,2,\z);
\draw[thin] (\x,2,\z)--(\x,2.4,\z);}}

\foreach \x in {0,1}{
\draw[thin] (\x,2,0)--(\x,2,0);
\draw[thin] (\x,2,0)--(\x,2.4,0);}

\foreach \x in {0,1}{
\draw[thin] (\x,1,1)--(\x,2,1);
\draw[thin] (\x,2,1)--(\x,2.4,1);}

\foreach \x in {0,1}{
\draw[thin] (\x,0,2)--(\x,2,2);
\draw[thin] (\x,2,2)--(\x,2.4,2);}

\foreach \x in {0,1}{
\draw[thin] (\x,1,1)--(\x,1,2);}

\foreach \x in {0,1,2}{\foreach \y in {2,2}{  
\draw[thin] (\x,\y,0)--(\x,\y,2);
\draw[thin] (\x,\y,2)--(\x,\y,2.4);}}

\foreach \x in {2,2}{\foreach \y in {0,1}{  
\draw[thin] (\x,\y,0)--(\x,\y,2);
\draw[thin] (\x,\y,2)--(\x,\y,2.4);}}

\foreach \x in {0,1}{\foreach \y in {0,1}{  
\draw[thin] (\x,\y,2)--(\x,\y,2);
\draw[thin] (\x,\y,2)--(\x,\y,2.4);}}

\foreach \x in {0,1}{\foreach \z in {0,1}{
\draw[thin,  dotted] (\x,0,\z)--(\x,2,\z);}}

\foreach \z in {0,1}{\foreach \y in {0,1}{
\draw[  thin,  dotted] (0,\y,\z)--(2,\y,\z);}}

\foreach \x in {0,1}{\foreach \y in {0,1}{
\draw[thin,  dotted] (\x,\y,0)--(\x,\y,2);}}

\draw[thin,->] (2,0,0)--(3.5,0,0);
\draw[thin,->] (0,2,0)--(0,3,0);
\draw[thin,->] (0,0,2)--(0,0,3);

\foreach \p in {(0,0,0),(1,0,0), (0,1,0), (1,1,0), (0,0,1), (1,0,1)}{
\draw \p circle (0.35mm);
\filldraw[white] \p circle (0.3mm);
}
\node at (0,0.5,3) {$x_3$};
\node at (0,3,0.4) {$x_2$};
\node at (3.5,-0.5,0) {$x_1$};
\node at (4,2,0) {$\tilde{I}$};

\end{tikzpicture}
\hspace{0.5cm}
\begin{tikzpicture}[scale=1,tdplot_rotated_coords,
                    rotated axis/.style={->,purple,ultra thick},
                    blackBall/.style={ball color = black!80},
                    borderBall/.style={ball color = white,opacity=.25}, 
                    very thick]

\foreach \x in {0,1,2}{\foreach \z in {0,1,2}{ \foreach \y in {2}{ \filldraw[black] (\x,\y,\z) circle (0.35mm);}}}

\foreach \x in {0,1,2}{\foreach \z in {1,2}{  \filldraw[black] (\x,1,\z) circle (0.35mm);}}

\foreach \x in {0,1,2}{\foreach \z in {2}{  \filldraw[black] (\x,0,\z) circle (0.35mm);}}

\foreach \x in {2}{\foreach \z in {0,1,2}{ \foreach \y in {0,1,2}{ \filldraw[black] (\x,\y,\z) circle (0.35mm);}}}

\foreach \z in {0,1,2}{ \foreach \y in {2}{ 
\draw[thin] (0,\y,\z)--(2,\y,\z);
\draw[thin] (2,\y,\z)--(2.4,\y,\z);}}

\foreach \z in {0,1}{ \foreach \y in {0,1}{ 
\draw[thin] (2,\y,\z)--(2,\y,\z);
\draw[thin] (2,\y,\z)--(2.5,\y,\z);}}

\foreach \z in {1,2}{
\draw[thin] (0,1,\z)--(2,1,\z);
\draw[thin] (2,1,\z)--(2.5,1,\z);}

\foreach \z in {2}{
\draw[thin] (0,0,\z)--(2,0,\z);
\draw[thin] (2,0,\z)--(2.5,0,\z);}

\foreach \x in {2}{\foreach \z in {0,1,2}{  
\draw[thin] (\x,0,\z)--(\x,2,\z);
\draw[thin] (\x,2,\z)--(\x,2.4,\z);}}

\foreach \x in {0,1}{
\draw[thin] (\x,2,0)--(\x,2,0);
\draw[thin] (\x,2,0)--(\x,2.4,0);}

\foreach \x in {0,1}{
\draw[thin] (\x,1,1)--(\x,2,1);
\draw[thin] (\x,2,1)--(\x,2.4,1);}

\foreach \x in {0,1}{
\draw[thin] (\x,0,2)--(\x,2,2);
\draw[thin] (\x,2,2)--(\x,2.4,2);}

\foreach \x in {0,1}{
\draw[thin] (\x,1,1)--(\x,1,2);}

\foreach \x in {0,1,2}{\foreach \y in {2,2}{  
\draw[thin] (\x,\y,0)--(\x,\y,2);
\draw[thin] (\x,\y,2)--(\x,\y,2.4);}}

\foreach \x in {2,2}{\foreach \y in {0,1}{  
\draw[thin] (\x,\y,0)--(\x,\y,2);
\draw[thin] (\x,\y,2)--(\x,\y,2.4);}}

\foreach \x in {0,1}{\foreach \y in {0,1}{  
\draw[thin] (\x,\y,2)--(\x,\y,2);
\draw[thin] (\x,\y,2)--(\x,\y,2.4);}}

\foreach \x in {0,1}{\foreach \z in {0,1}{
\draw[thin,  dotted] (\x,0,\z)--(\x,2,\z);}}

\foreach \z in {0,1}{\foreach \y in {0,1}{
\draw[  thin,  dotted] (0,\y,\z)--(2,\y,\z);}}

\foreach \x in {0,1}{\foreach \y in {0,1}{
\draw[thin,  dotted] (\x,\y,0)--(\x,\y,2);}}

\draw[thin,->] (2,1,0)--(3.5,1,0);
\draw[thin,->] (0,2,0)--(0,3,0);
\draw[thin,->] (0,1,2)--(0,1,3);

\foreach \p in {(0,0,0),(1,0,0), (0,1,0), (1,1,0), (0,0,1), (1,0,1)}{
\draw \p circle (0.35mm);
\filldraw[white] \p circle (0.3mm);
}

\node at (0,1.5,3) {$x_3$};
\node at (0,3,0.4) {$x_2$};
\node at (3.5,0.5,0) {$x_1$};
\node at (4,3,0) {$\tilde{I}+\alpha$};
\end{tikzpicture}
\hspace{0.5cm}
\begin{tikzpicture}[scale=1,tdplot_rotated_coords,
                    rotated axis/.style={->,purple,ultra thick},
                    blackBall/.style={ball color = black!80},
                    borderBall/.style={ball color = white,opacity=.25}, 
                    very thick]

\foreach \x in {0,1,2}{\foreach \y in {0,1,2}{
\draw[thin, dotted] (\x,\y,0)--(\x,\y,2);}}

\foreach \x in {0,1,2}{\foreach \z in {0,1,2}{
\draw[thin, dotted] (\x,0,\z)--(\x,2,\z);}}

\foreach \z in {0,1,2}{\foreach \y in {0,1,2}{
\draw[thin, dotted] (0,\y,\z)--(2,\y,\z);}}

\draw[thin,dotted,->] (0,1,0)--(3.5,1,0);
\draw[thin,dotted,->] (0,2,0)--(0,3,0);
\draw[thin,dotted,->] (0,1,0)--(0,1,3);

\foreach \x in {0,1,2}{\foreach \y in {0,1,2}{\foreach \z in {0,1,2}{
\draw (\x,\y,\z) circle (0.35mm);
\filldraw[white] (\x,\y,\z) circle (0.3mm);}}}

\foreach \p in {(0,2,0),(1,2,0),(0,1,1),(1,1,1), (0,0,2),(1,0,2),(2,0,2),(2,0,1),(2,0,0)}{
\filldraw \p circle (0.35mm);}
\draw[thin] (0,1,1)--(1,1,1);
\draw[thin] (0,2,0)--(1,2,0);
\draw[thin] (0,0,2)--(2.5,0,2);
\draw[thin] (2,0,1)--(2.5,0,1);
\draw[thin] (2,0,0)--(2.5,0,0);
\draw[thin] (2,0,0)--(2,0,2.5);
\draw[thin] (1,0,2)--(1,0,2.5);
\draw[thin] (0,0,2)--(0,0,2.5);

\node at (0,1.5,3) {$x_3$};
\node at (0,3,0.4) {$x_2$};
\node at (3.5,0.5,0) {$x_1$};
\node at (4,3,0) {$\big(\tilde{I}+\alpha\big) \setminus \tilde{I}$};
\end{tikzpicture}
\end{center}
The set $\big(\tilde{I}+\alpha\big) \setminus \tilde{I}$ contains two bounded connected components and one unbounded connected component, so $\dim_\kk |T(I)|_{\alpha}=2$.
\end{example}

\begin{remark} \label{RemarkCharacteristicZero} 
A simple but useful consequence of Proposition \ref{PropositionBasis} is the fact that, for $I$ monomial, $\dim_\kk  T(I)$ is independent of $\kk$. 
Thus, in Conjectures \ref{BIconj1}  and \ref{BIconj2} we may assume $\text{char} \, \kk =0$.
\end{remark}

\begin{remark}\label{RemarkArrows}
For $n=2$, 
the tangent space  $T(I)$ is analyzed combinatorially in \cite{Haiman98} in terms of ``arrows'', 
see also \cite[18.2]{MillerSturmfels}.
That description is essentially equivalent to the one presented here, in Proposition \ref{PropositionBasis}.
However, 
we find  the framework of connected components to be more  transparent and efficient.
\end{remark}

Recall the distinguished subspaces of $T(I)$ introduced in Definition \ref{DefinitionSubspaces}.
These are the only relevant subspaces of the tangent space:

\begin{prop} \label{PropositionDecompositionDistinguishedSubspaces} 
If $[I]\in \Hilb^d\AA^n$ is a monomial point and $n\geq 2$, then
$T(I) = \bigoplus_{\mathfrak{s} \in \mathfrak{S}} T_{\mathfrak{s}}(I).$
\end{prop}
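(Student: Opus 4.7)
The plan is to exploit the combinatorial description of $T(I)$ from Proposition~\ref{PropositionBasis}. Since $T(I)$ is $\Z^n$-graded with $T(I) = \bigoplus_{\alpha \in \Z^n} |T(I)|_{\alpha}$, I want to show that $|T(I)|_{\alpha} = 0$ whenever $\alpha$ has \emph{constant} signature, i.e., $\alpha \in \N^n$ or $\alpha \in \Z^n_{<0}$. Once this is established, the remaining graded components reassemble exactly as $\bigoplus_{\mathfrak{s} \in \mathfrak{S}} T_{\mathfrak{s}}(I)$.

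The all-positive case is immediate: if $\alpha \in \N^n$, upward-closure of $\tilde{I}$ under $\N^n$-translation gives $\tilde{I} + \alpha \subseteq \tilde{I}$, so $(\tilde{I} + \alpha) \setminus \tilde{I} = \emptyset$ and Proposition~\ref{PropositionBasis} gives $|T(I)|_{\alpha} = 0$.

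The all-negative case $\alpha \in \Z^n_{<0}$ is the heart of the argument. By the remark preceding Proposition~\ref{PropositionBasis}, a bounded connected component of $(\tilde{I} + \alpha) \setminus \tilde{I}$ must lie in $\N^n$, so it suffices to connect any $\gamma \in \N^n \cap (\tilde{I} + \alpha) \setminus \tilde{I}$ within the set to some point having a negative coordinate. The plan is a zig-zag walk involving two distinct coordinates (available since $n \geq 2$), say $e_1$ and $e_2$. Step~A: ascend from $\gamma$ in direction $+e_1$ until first entering $\tilde{I}$, which happens in finitely many steps by Artinianness of $I$, and let $\delta$ be the penultimate point; the entire ascending path lies in $(\tilde{I} + \alpha) \setminus \tilde{I}$ by upward-closure of $\tilde{I} + \alpha$ and minimality of the stopping time. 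Step~B: descend from $\delta$ in direction $-e_2$; since $\delta + e_1 \in \tilde{I}$, upward-closure of $\tilde{I}$ together with $|\alpha_i| \geq 1$ (from $\alpha \in \Z^n_{<0}$) yields the componentwise inequality $\delta + |\alpha| - j e_2 \geq \delta + e_1$ for every $0 \leq j \leq |\alpha_2|$, hence $\delta - j e_2 \in \tilde{I} + \alpha$ throughout this range. If the descent reaches a negative $e_2$-coordinate we are done; otherwise it stops at $\gamma' = \delta - |\alpha_2| e_2$ with $\gamma'_2 = \gamma_2 - |\alpha_2|$ strictly less than $\gamma_2$, and we iterate Steps~A--B starting from $\gamma'$.

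The main obstacle is keeping the walk inside $(\tilde{I} + \alpha) \setminus \tilde{I}$ for its entire length, which is precisely what forces the iterative (rather than one-shot) structure of Step~B: the comparison with $\delta + e_1 \in \tilde{I}$ is valid only up to depth $|\alpha_2|$ in direction $-e_2$, beyond which upward-closure no longer supplies membership in $\tilde{I}$. Termination of the outer iteration is easy, however, since the $e_2$-coordinate strictly decreases by at least $|\alpha_2| \geq 1$ each round, so after at most $\lceil \gamma_2/|\alpha_2| \rceil + 1$ rounds the walk must cross into a negative coordinate, proving that $\gamma$'s component is unbounded.
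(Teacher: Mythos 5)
Your proof is correct, but it handles the all-negative case by a different mechanism than the paper. The paper shows that for $\alpha$ with all coordinates negative the \emph{entire} region $(\tilde{I}+\alpha)\setminus\tilde{I}$ is a single connected unbounded set: it translates the ``staircase boundary'' $B=\tilde{I}\setminus\bigl(\tilde{I}+(1,\dots,1)\bigr)$, which is connected and unbounded, observes that $B+\alpha$ lands inside the region, and then connects every other point of the region to $B+\alpha$. You instead prove only the weaker (but sufficient) statement that each connected component meeting $\N^n$ is unbounded, via an explicit zig-zag walk: ascend in the $\mathbf{e}_1$ direction to just below $\tilde{I}$, then descend in the $\mathbf{e}_2$ direction by exactly $|\alpha_2|$ steps (the comparison $\delta+|\alpha|-j\mathbf{e}_2\geq\delta+\mathbf{e}_1$ correctly certifies membership in $\tilde{I}+\alpha$ precisely in that range, and staying out of $\tilde{I}$ while descending is automatic from upward-closure of $\tilde{I}$), iterating until the second coordinate goes negative. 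Your termination argument is sound since the second coordinate drops by $|\alpha_2|\geq 1$ per round. What the paper's argument buys is a cleaner global picture (one canonical unbounded witness for the whole region, with no iteration or case analysis on where the walk exits $\N^n$); what yours buys is that it is entirely local and constructive, needing nothing beyond upward-closure of $\tilde{I}$ and Artinianness, and it makes transparent exactly where the hypothesis $n\geq 2$ enters (two independent directions for the zig-zag). Both are complete proofs.
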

\begin{proof}
Let $\alpha\in \Z^n$.
If $\alpha_i \geq 0 $ for all $i$, then 
 $\tilde{I}+{\alpha} \subseteq \tilde{I}$ and  $(\tilde{I}+{\alpha}) \setminus\, \tilde{I} = \emptyset$.
Suppose $\alpha_i <0 $ for all $i$,  we claim that
 $(\tilde{I}+{\alpha}) \setminus\, \tilde{I}$ is connected and unbounded.
 To see this, notice that the ``boundary'' $B = \tilde{I}\,\setminus \big( \tilde{I} + (1, 1, \ldots, 1) \big)$ is connected and unbounded.
Furthermore,  $(B + \alpha) \cap \tilde{I}= \emptyset$,
so $(B + \alpha)\subseteq (\tilde{I}+{\alpha}) \setminus\, \tilde{I} $ is connected and unbounded.
However, any point of $(\tilde{I}+{\alpha}) \setminus\, \tilde{I} $ is connected to $(B + \alpha)$,
since any point of $\tilde{I}$ is connected to $B$ by a straight path, 
and this  verifies the claim.
In either case $|T(I)|_\alpha = 0$ by Proposition~\ref{PropositionBasis}.
\end{proof}

For a monomial point  $[I] \in \Hilb^d\AA^2$
Proposition \ref{PropositionDecompositionDistinguishedSubspaces} gives  the decomposition
$$
T(I)  = T_{\p\n}(I) 	\oplus   T_{\n\p}(I), 
$$
whereas for a monomial point  $[I] \in \Hilb^d\AA^3$  we have
$$
T(I)  = T_{\ppn}(I) 	\oplus   T_{\pnp}(I) 	\oplus   T_{\npp}(I) 	\oplus   T_{\pnn}(I) 	\oplus   T_{\npn}(I) 	\oplus   T_{\nnp}(I).
$$

Next, we compute the components of  the tangent space for the fat point $[\mm^r]$.
For any vector $\alpha = (\alpha_1, \ldots, \alpha_n)\in \Z^n$
we have $\alpha = \alpha^+ - \alpha^-$ for two unique vectors $\alpha^+,\alpha^-\in \N^n$ such that $\alpha^+ \cdot \alpha^- = 0$.
Moreover  we denote  $\omega(\alpha) = \alpha_1 + \cdots + \alpha_n \in \Z$.

\begin{lemma}\label{LemmaHilbertFunctionFatPoint}
Let $\alpha\in \Z^n$ and $r \in \N$.
We have $|T(\mm^r)|_\alpha=0$ if $\omega(\alpha) \ne -1$.
If $\omega(\alpha)=  -1$ then $\dim_\kk |T(\mm^r)|_\alpha={n + r - \omega(\alpha^-) -1\choose n-1 }
$
if  $ \omega(\alpha^-) \leq r$, $|T(\mm^r)|_\alpha= 0$ otherwise.
\end{lemma}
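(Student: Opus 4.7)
The plan is to apply Proposition \ref{PropositionBasis} and count bounded connected components of $(\widetilde{\mm^r} + \alpha) \setminus \widetilde{\mm^r}$ in $\Z^n$. First I would unpack this set: a vector $\gamma \in \Z^n$ belongs to it exactly when (a) $\gamma \geq \alpha$ componentwise, (b) $\omega(\gamma) \geq r + \omega(\alpha)$, and (c) $\gamma \notin \N^n$ or $\omega(\gamma) < r$. By the remark preceding Proposition \ref{PropositionBasis}, bounded components must lie in $\N^n$, so the relevant piece is
\[
A \ = \ \big\{\gamma \in \N^n : \gamma \geq \alpha^+ \text{ and } r + \omega(\alpha) \leq \omega(\gamma) \leq r - 1\big\}.
\]
I would then split into cases according to the value of $\omega(\alpha)$.

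If $\omega(\alpha) \geq 0$, the constraints $\omega(\gamma) \geq r$ and $\omega(\gamma) \leq r - 1$ are incompatible, so $A = \emptyset$. If $\omega(\alpha) = -1$, then $A$ collapses to the single level $\omega(\gamma) = r - 1$: any two distinct lattice points at the same $\omega$-level are non-adjacent (each $\pm\mathbf{e}_i$ move changes $\omega$ by $\pm 1$), and a $-\mathbf{e}_i$ step into $\Z^n \setminus \N^n$ would drop $\omega$ to $r - 2 < r - 1 = r + \omega(\alpha)$ and violate (b), so each point of $A$ is its own bounded component. Counting via the substitution $\gamma = \alpha^+ + \delta$ reduces to enumerating $\delta \in \N^n$ with $\omega(\delta) = r - 1 - \omega(\alpha^+)$; since $\omega(\alpha) = -1$ forces $\omega(\alpha^+) = \omega(\alpha^-) - 1$, the total is $\binom{n + r - \omega(\alpha^-) - 1}{n - 1}$ when $\omega(\alpha^-) \leq r$, and zero otherwise.

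The case $\omega(\alpha) \leq -2$ is the main obstacle: here the plan is to show that there are no bounded components at all by linking every point of $A$ to some point of $\Z^n \setminus \N^n$, which automatically lies in an unbounded component. First, $A$ is itself connected inside the set, because the moves $+\mathbf{e}_i$ raise any $\gamma \in A$ to higher $\omega$-levels within $A$, while two points in the top level $\omega = r - 1$ are joined by detouring through level $r - 2$, which is admissible precisely because $r - 2 \geq r + \omega(\alpha)$. Next, assuming $A \neq \emptyset$ (equivalently $\omega(\alpha^+) \leq r - 1$), I would fix an index $i$ with $\alpha_i < 0$ and produce $\gamma \in A$ at level $r - 1$ with $\gamma_i = 0$; this is feasible because $\sum_{j \neq i} \alpha_j^+ = \omega(\alpha^+) \leq r - 1$ leaves enough mass to place the remaining units on the other coordinates. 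The step $-\mathbf{e}_i$ at such $\gamma$ then satisfies (a) and (b) but lies outside $\N^n$, linking $A$ to an unbounded region. The delicate point is coordinating these two boundary manipulations — dipping a level within $A$ and exiting $\N^n$ — both of which require the slack $\omega(\alpha) \leq -2$; this is exactly what separates this case from $\omega(\alpha) = -1$.
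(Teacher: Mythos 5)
Your proposal is correct and follows essentially the same route as the paper: reduce to counting bounded components via Proposition \ref{PropositionBasis}, note that only the part of the region inside $\N^n$ matters, observe that for $\omega(\alpha)=-1$ these are isolated points at level $r-1$ counted by the shift $\gamma\mapsto\gamma-\alpha^+$, and for $\omega(\alpha)\le -2$ use the availability of two adjacent levels to connect everything to an unbounded region. Your case $\omega(\alpha)\le -2$ is just a more detailed version of the paper's one-line inspection of the levels $\omega(\beta)=r,r+1$.
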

\begin{proof}
For simplicity we denote $M = \widetilde{\mm^r} \subseteq \N^n$.
If $\omega(\alpha) \geq 0$ then$\big((M + \alpha)\, \setminus \, M\big)\cap \N^n = \emptyset$,
while 
if $\omega(\alpha) \leq -2$ 
then the whole region
$(M + \alpha)\, \setminus \, M$ is connected and unbounded, as it follows by inspecting  the points $\beta+\alpha\in (M + \alpha)$ with $\omega(\beta) = r, r+1$.
In either case  $|T(\mm^r)|_\alpha = 0 $ by Proposition \ref{PropositionBasis}.

If $\omega(\alpha)=  -1$ then  any bounded  component of $(M + \alpha)\, \setminus \, M$ consists of a single point $\beta+\alpha\in \N^n$ with $\omega(\beta) = r$.
These points are in bijection with  points $\gamma= \beta-\alpha^-\in \N^n$ such that $\omega(\gamma) = r - \omega(\alpha^-)$,
i.e. with the monomials of degree $ r - \omega(\alpha^-)$, yielding the desired formula.
\end{proof}

Finally, 
we distinguish some special tangent vectors in $T(I)$.
For an $S$-module $M$, we denote its {\bf socle} by $\soc(M) = 0 :_M \mm\subseteq M$.
Notice that
$\soc(T(I)) = \Hom_S(I, \soc(S/I)) \subseteq T(I)$.

\begin{remark}\label{RemarkBasisSocle}
If $[I]\in \Hilb^d\AA^n$ is monomial, then $\soc(S/I)$ and   $\soc(T(I))$ are $\Z^n-$graded.
Furthermore,  we see from the proof of Proposition \ref{PropositionBasis} that 
a $\kk$-basis for $\big|\soc(T(I))\big|_\alpha$ is 
given by the maps $\varphi_U$ where $U\subseteq (\tilde{I}+{\alpha}) \setminus \, \tilde{I}$ is a connected component such that $\Card(U) =1 $.
We refer to these $\varphi_U$'s as the \emph{socle maps}.
\end{remark}

It is easy to compute $\dim_\kk \soc(T(I))$, using the isomorphism 
\begin{equation}\label{EquationSocleTangentSpace}
\soc\big(T(I)\big) =  \Hom_S\left(I,\, \soc\left(\frac{S}{I}\right)\right) \cong \Hom_\kk\left(\frac{I}{\mm I},\, \soc\left(\frac{S}{I}\right)\right).
\end{equation}
When $I = \mm^r$  we have $  \soc(T(I)) = T(I)$ by Lemma \ref{LemmaHilbertFunctionFatPoint},
 but in general the inclusion is strict.

\begin{remark}
For other instances in which
deformations of
 points in the Hilbert scheme admit explicit combinatorial descriptions, 
see the works of  Altmann and Christophersen \cite{AC1,AC2} on square free monomial ideals.
\end{remark}

\section{Symmetries in the tangent space and smooth points}\label{SectionSymmetry}

In the rest of the paper  we work with the Hilbert scheme of points in $\AA^3$, 
so we fix $S = \kk[x,y,z]$ and $\mm=(x,y,z)$, unless stated otherwise.
We explore  symmetries between the components $T_\mathfrak{s}(I)$ of the tangent space introduced in Definition \ref{DefinitionSubspaces}.
The main results of this section are Proposition \ref{PropositionDualityBij} and Theorem \ref{TheoremPairingSignatures},
which parallel  phenomena observed for $\Hilb^d\AA^2$ in \cite{Haiman98}.
As a byproduct, we also prove Theorem \ref{TheoremSmoothnessCriterion}, which characterizes smooth monomial points on the Hilbert scheme.

A monomial ideal  $I \subseteq S$ 
admits direct sum decompositions, as module over the subrings of $S$, into smaller monomial ideals.
For instance, the 
 $\kk[z]-$ and $\kk[y,z]-$decompositions of $I$ are
$$
I = \bigoplus_{i,j} x^i y^j \big(z^{b_{i,j}}\big)  = \bigoplus_{i} x^i I_i
$$
where $(z^{b_{i,j}})\subseteq \kk[z]$ and $I_i \subseteq \kk[y,z]$ are  monomial ideals.
Clearly, such decompositions exist and are unique.
Since $I$ is an  ideal, we have $b_{i,j} \geq b_{i+1,j}, b_{i,j+1}$ and $I_i \subseteq I_{i+1}$.
If $I$ is $\mm$-primary, then $b_{i,j}=0$ for all but finitely many pairs $i,j$,
and $I_i = \kk[y,z]$ for all but finitely many $i$.
Analogous remarks hold  for the  $\kk[x]-,$ $\kk[y]-,$ $\kk[x,y]-,$ and $\kk[x,z]-$decompositions of $I$.

\begin{remark}
Let $[I]\in \Hilb^d\AA^2$ be a monomial point.
In his way to proving that $\Hilb^d\AA^2$ is smooth, Haiman \cite{Haiman98} shows that
\begin{equation}\label{EquationPnNpA2}
\dim_\kk T_{\p\n}(I) = \dim_\kk T_{\n\p}(I) =d.
\end{equation}
In fact,  a more precise statement is proved. 
Consider the $\kk[y]$-decomposition  $I =\bigoplus x^i (y^{b_i})$.
Then for each $i\in \N$ we have 
\begin{equation}\label{EquationBiA2}
\sum_{\alpha_1 = i} \dim_\kk  | T(I)|_\alpha = \sum_{\alpha_1 = -i-1} \dim_\kk  | T(I)|_\alpha = b_i.
\end{equation}
\end{remark}

Equations \eqref{EquationPnNpA2} and \eqref{EquationBiA2} cannot be extended directly to $\AA^3$,
since the Hilbert scheme is singular,
and the  dimension of $T(I)$ actually depends on $I$ and not just on $d$.
Nevertheless, 
we are going to establish versions of these equations for $\Hilb^d\AA^3$.

We begin with a homological lemma, which 
 we state  in the general case of a polynomial ring in $n$ variables, for simplicity.

\begin{lemma}\label{LemmaSerreDuality}
Let $S = \kk[x_1, \ldots, x_n]$ and $M$ be an Artinian $\Z^n$-graded $S$ module.
For each $\ell=0, \ldots, n$ there is a natural isomorphism of functors of finitely generated $\Z^n$-graded $S$ modules 
$$
\Ext^\ell_S(-,M) \cong \Ext^{n-\ell}_S(M,-\otimes \omega_S)' 
$$
where  $-'$ denotes the Matlis dual and $\omega_S $   the $\Z^n$-graded canonical module.
In particular, for every finitely generated $\Z^n$-graded module $N$ we have 
$$
\Ext^\ell_S(N,M)^\vee \cong \Ext^{n-\ell}_S(M,N)(-1,-1,\ldots,-1)
$$
as $\Z^n-$graded vector spaces, where $-^\vee$ denotes the $\kk$-dual.
\end{lemma}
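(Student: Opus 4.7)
The strategy is to realize both sides of the asserted isomorphism as two different derived-category computations of a single object, namely $N \otimes^{\mathbf{L}}_S M'$ where $M'$ denotes the $\Z^n$-graded Matlis dual. The three tools I will draw on are: (i) Matlis reflexivity $M \cong (M')'$, which holds since $M$ is Artinian and hence has finite-dimensional graded pieces; (ii) tensor-evaluation, $\mathbf{R}\Hom_S(X, Y) \otimes^{\mathbf{L}}_S Z \cong \mathbf{R}\Hom_S(X, Y \otimes^{\mathbf{L}}_S Z)$, which is an isomorphism whenever $Z$ is perfect and hence applies to any finitely generated $\Z^n$-graded $S$-module by Hilbert's syzygy theorem; and (iii) graded local duality on the regular ring $S$, which for the $\mm$-torsion module $M$ takes the compact form $\mathbf{R}\Hom_S(M, \omega_S^{\bullet}) \cong M'$, where $\omega_S^{\bullet} = \omega_S[n]$ is the normalized dualizing complex.

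The first step is to combine Matlis reflexivity with the Hom-tensor adjunction over $\kk$ (which descends to the derived category because $\Hom_\kk(-, \kk)$ is exact) to get
$$\mathbf{R}\Hom_S(N, M) \;\cong\; \mathbf{R}\Hom_S\bigl(N, \Hom_\kk(M', \kk)\bigr) \;\cong\; \Hom_\kk\bigl(N \otimes^{\mathbf{L}}_S M', \kk\bigr).$$
Taking cohomology yields the natural identification $\Ext^\ell_S(N, M) \cong \mathrm{Tor}^S_\ell(N, M')'$, functorial in $N$.

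The second step is to apply tensor-evaluation with $Z = N$ together with local duality for $M$ to obtain
$$\mathbf{R}\Hom_S(M, N \otimes \omega_S^{\bullet}) \;\cong\; \mathbf{R}\Hom_S(M, \omega_S^{\bullet}) \otimes^{\mathbf{L}}_S N \;\cong\; M' \otimes^{\mathbf{L}}_S N.$$
Unwinding the shift $\omega_S^{\bullet} = \omega_S[n]$ and using the symmetry of Tor produces the natural identification $\Ext^{n-\ell}_S(M, N \otimes \omega_S) \cong \mathrm{Tor}^S_\ell(N, M')$.

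Combining the two computations yields the stated natural isomorphism of functors, and the ``in particular" claim follows by Matlis-dualizing both sides and observing that $\omega_S \cong S(-1, \ldots, -1)$ in this $\Z^n$-grading. The main obstacle I expect is verifying local duality in the form $\mathbf{R}\Hom_S(M, \omega_S^{\bullet}) \cong M'$ for an Artinian module $M$ that is not finitely generated. I plan to handle this by d\'evissage: the case $M = \kk$ follows directly from the self-duality of the Koszul resolution of $\kk$, and the general finite-length case then follows by induction along short exact sequences of finite-length submodules. Alternatively, one can Matlis-dualize a minimal $\Z^n$-graded free resolution of $M'$ to obtain an explicit finite injective resolution of $M$ by direct sums of shifts of $E(\kk) \cong S'$, and compute $\mathbf{R}\Hom_S(M, \omega_S^{\bullet})$ directly from that resolution.
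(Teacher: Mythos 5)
Your argument is correct, and it reaches the lemma by a genuinely different route than the paper. The paper proves the isomorphism of functors via the axiomatic characterization of derived functors: it verifies the case $\ell=0$ (graded local duality $\Hom_S(-,M)\cong\Hom_S(-,\Ext^n_S(M,\omega_S)')$, Hom--tensor adjointness, and a top-cohomology/right-exactness computation on a free resolution of $M$), checks effaceability ($\Ext^{n-\ell}_S(M,P\otimes\omega_S)=0$ for $\ell>0$ and $P$ projective, since $M$ is perfect of grade $n$), and then appeals to the long exact sequence and naturality of connecting maps. You instead identify both sides with a third object, $\mathrm{Tor}^S_\ell(N,M')$ up to Matlis dual, using derived adjunction on one side and tensor-evaluation plus derived local duality on the other. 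The essential inputs coincide --- your $\mathbf{R}\Hom_S(M,\omega_S[n])\cong M'$ encodes exactly the paper's local duality together with the vanishing of $\Ext^{i}_S(M,\omega_S)$ for $i\neq n$, and tensor-evaluation for perfect $N$ is the derived form of the paper's right-exactness step --- but your packaging gives naturality for free and avoids the $\delta$-functor bookkeeping, at the cost of the derived-category formalism. Two small caveats. First, your d\'evissage along finite-length submodules literally establishes local duality only for finite-length $M$; this is all that is needed (and the paper's own proof makes the same implicit restriction, since it takes a finite minimal free resolution of $M$ and uses that $M$ is Cohen--Macaulay of grade $n$). Second, your ``alternative'' verification is off: an injective coresolution of $M$ computes $\mathbf{R}\Hom_S(-,M)$, not $\mathbf{R}\Hom_S(M,-)$, so it cannot be used to evaluate $\mathbf{R}\Hom_S(M,\omega_S^{\bullet})$ directly. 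A cleaner substitute is to observe that $M$ is $\mm$-torsion, so $\mathbf{R}\Hom_S(M,\omega_S[n])\cong\mathbf{R}\Hom_S\bigl(M,\mathbf{R}\Gamma_{\mm}\omega_S[n]\bigr)\cong\Hom_S(M,E(\kk))=M'$, using that $\mathbf{R}\Gamma_{\mm}\omega_S[n]\cong E(\kk)\cong S'$ is injective; but since the primary d\'evissage route suffices, this does not affect the correctness of your proof.
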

\begin{proof}
To prove the first assertion,
by the universal properties of derived functors \cite[A.3.9]{Eisenbud}, 
it suffices to verify the following four properties for the functors $ \Ext^{n-\ell}_S(M,-\otimes \omega_S)'$.
\begin{enumerate}
\item  \label{ItemExtDual1}
Isomorphism for $\ell = 0$, that is, 
$
\Hom_S(-,M) \cong \Ext^{n}_S(M,-\otimes \omega_S)'.
$
\item \label{ItemExtDual2}
The vanishing $\Ext^{n-\ell}_S(M,P\otimes \omega_S)'=0$  for finitely generated projective $P$ and $\ell >0$.
\item \label{ItemExtDual3}
For each short exact sequence $0 \rightarrow N' \rightarrow N \rightarrow N'' \rightarrow 0$,
there is a long exact sequence of $ \Ext^{n-\ell}_S(M,-\otimes \omega_S)'$.
\item \label{ItemExtDual4}
Naturality  of the connecting homomorphism, that is, 
for each map of  short exact sequences of $S$-modules, the two long exact sequences of $ \Ext^{n-\ell}_S(M,-\otimes \omega_S)'$ form a  a commutative diagram.
\end{enumerate}
For \eqref{ItemExtDual1},
observe that we have natural isomorphisms
$$
\Hom_S(-,M) \cong  \Hom_S\big(-,\Ext^n_S(M,\omega_S)'\big)
\cong  \big(- \otimes \Ext^n_S(M,\omega_S)\big)'
\cong  \Ext^n_S(M, - \otimes\omega_S)'.
$$
The first one follows from the  Local Duality Theorem  \cite[3.6]{BrunsHerzog}, while 
the second one by Hom-Tensor adjointness.
To see  the third one,
let  $F_\bullet$ be a minimal free resolution of $M$, then
\begin{align*}
\Ext^n_S(M, - \otimes\omega_S) &= H^n\big( \Hom(F_\bullet, - \otimes\omega_S)\big) \\
&\cong H^n\big( \Hom(F_\bullet, \omega_S ) \otimes -\big) \\
&\cong H^n\big( \Hom(F_\bullet, \omega_S ) \big) \otimes -  && \text{by right-exactness}\\
&=  \Ext^n_S(M, \omega_S)  \otimes -.
\end{align*}

For \eqref{ItemExtDual2} it suffices to show the vanishing in the case $P=S$, which 
 follows since $M$ is Cohen-Macaulay of grade $n$, cf. \cite[3.3]{BrunsHerzog}. 
Items \eqref{ItemExtDual3} and \eqref{ItemExtDual4}
 follow from the corresponding properties of $\Ext^\bullet_S(M, -)$ combined with the exact contravariant functor $-'$.
Finally,
the second assertion of the theorem follows from the first since
$\omega_S \cong S(-1, -1, \ldots, -1)$ and  $-^\vee \cong -'$, cf.   \cite[3.6]{BrunsHerzog}.
\end{proof}

\begin{prop}\label{PropositionDualityBij}
Let $[I]\in \Hilb^d\AA^3$ be a monomial point, 
with $\kk[z]-$decomposition  $I = \bigoplus x^iy^j(z^{b_{i,j}})$. 
For every $i,j\in \N$ we have
\begin{equation}\label{EquationBij}
\sum_{\substack{\alpha_1 = i\\ \alpha_2 = j }} \dim_\kk  | T(I)|_\alpha = 
b_{i,j}+ \sum_{\substack{\alpha_1 = -i-1\\ \alpha_2 = -j-1 }} \dim_\kk  | T(I)|_\alpha.
\end{equation}
\end{prop}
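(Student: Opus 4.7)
The plan is to reformulate \eqref{EquationBij} as the vanishing of a bigraded Euler characteristic of $\Ext^*_S(S/I, S/I)$, and verify that vanishing via a Poincaré series calculation. Applying $\Hom_S(-, S/I)$ to $0 \to I \to S \to S/I \to 0$ and observing that the induced map $\Hom_S(S/I, S/I) \to \Hom_S(S, S/I)$ is the identity on $S/I$ yields a $\Z^3$-graded isomorphism $T(I) \cong \Ext^1_S(S/I, S/I)$. Next, Lemma \ref{LemmaSerreDuality} applied with $\ell = 1$ and $M = N = S/I$ (which is Artinian since $I$ is $\mm$-primary) gives the graded duality $\Ext^1_S(S/I, S/I)^\vee \cong \Ext^2_S(S/I, S/I)(-1,-1,-1)$. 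Substituting $\alpha = (-i-1, -j-1, \alpha_3)$ and summing over $\alpha_3 \in \Z$ identifies the rightmost strip in \eqref{EquationBij} with $\sum_{\beta_3} \dim_\kk |\Ext^2_S(S/I, S/I)|_{(i,j,\beta_3)}$.

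By the $\kk[z]$-decomposition, $b_{i,j} = \dim_\kk (S/I)_{(i,j,*)} = \dim_\kk |\Ext^0_S(S/I, S/I)|_{(i,j,*)}$; and applying Lemma \ref{LemmaSerreDuality} with $\ell = 0$ gives $\Ext^3_S(S/I, S/I) \cong (S/I)^\vee(1,1,1)$, whose $(i,j,*)$-piece vanishes for all $i, j \in \N$ since $S/I$ is supported in $\N^3$. Thus \eqref{EquationBij} is equivalent to the vanishing of the bigraded Euler characteristic
\[
\chi_{(i,j,*)} \,:=\, \sum_{\ell = 0}^{3} (-1)^\ell \dim_\kk |\Ext^\ell_S(S/I, S/I)|_{(i,j,*)} \,=\, 0.
\]
To compute $\chi_{(i,j,*)}$, take a $\Z^3$-graded free resolution $F_\bullet \to S/I$. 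A standard manipulation of multigraded Hilbert series gives
\[
\sum_\ell (-1)^\ell H\big(\Hom_S(F_\ell, S/I);\, x, y, z\big) \,=\, \beta_{S/I}(1/x, 1/y, 1/z) \cdot H(S/I;\, x, y, z),
\]
where $\beta_{S/I}(x,y,z) = H(S/I;\, x,y,z)(1-x)(1-y)(1-z)$ is the $\Z^3$-graded Hilbert numerator. Specializing at $z = 1$ annihilates the factor $(1-z)$ in $\beta_{S/I}$, so the left-hand side vanishes identically; since this specialization equals $\sum_{i,j \in \Z} \chi_{(i,j,*)}\, x^i y^j$, we conclude $\chi_{(i,j,*)} = 0$ for every $(i,j)$.

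The conceptual heart of the argument is the duality in Lemma \ref{LemmaSerreDuality}: it converts the asymmetric-looking pairing between the strips at $(i,j,*)$ and $(-i-1,-j-1,*)$ into the transparent pairing between $\Ext^1$ and $\Ext^2$ in the common bidegree $(i,j,*)$. After that, the identity reduces to a routine Euler characteristic computation; the only technical point is to verify that the specialization of Hilbert series at $z = 1$ is legitimate, which is automatic because $S/I$ has finite length so all Poincaré series involved are honest Laurent polynomials.
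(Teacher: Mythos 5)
Your proposal is correct. The first half --- identifying $T(I)$ with $\Ext^1_S(S/I,S/I)$, invoking Lemma \ref{LemmaSerreDuality} to convert the strip at $(-i-1,-j-1,\ast)$ into the $\Ext^2$-strip at $(i,j,\ast)$, and disposing of the $\Ext^0$ and $\Ext^3$ contributions --- is exactly the paper's reduction of \eqref{EquationBij} to the vanishing of the Euler characteristic $\chi_{(i,j,\ast)} = \sum_{\ell}(-1)^\ell \dim_\kk|\Ext^\ell_S(S/I,S/I)|_{(i,j,\ast)}$. Where you genuinely diverge is in proving that vanishing. The paper takes the Taylor resolution of $S/I$, computes each strip contribution explicitly as $t_{\mathcal{A}} = b_{i+\beta^{\mathcal{A}}_1,\, j+\beta^{\mathcal{A}}_2}$, and telescopes the alternating sum by pairing $\mathcal{A}$ with $\mathcal{A}\cup\{m\}$, where $\bfx^{\beta^{(m)}}=z^{b_{0,0}}$ is the pure power of $z$ among the generators. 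You instead work with an arbitrary finite $\Z^3$-graded free resolution and observe that the generating function $\sum_{i,j}\chi_{(i,j,\ast)}x^iy^j$ is the specialization at $z=1$ of $\beta_{S/I}(1/x,1/y,1/z)\cdot H(S/I;x,y,z)$, which vanishes because the Hilbert numerator of a finite-length module is divisible by $(1-z)$. The two computations encode the same underlying fact --- that each $(i,j)$-strip of $S/I$ is a finite-length $\kk[z]$-module --- but yours is resolution-independent and replaces the lcm bookkeeping with a one-line divisibility observation, while the paper's version stays at the level of explicit lattice-point counting. The technical points you flag (legitimacy of setting $z=1$, and the equality of the Euler characteristics of the complex $\Hom_S(F_\bullet,S/I)$ and of its cohomology in each strip) are indeed harmless, since every series involved is an honest Laurent polynomial.
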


\begin{proof}
Fix $i,j\in \N$ and 
consider the groups $\Ext^\ell_S(S/I,S/I)$ for $\ell = 0, \ldots, 3$.
We have 
$$
\Ext^0_S(S/I,S/I) = S/I
\quad \text{and} \quad
\Ext^1_S(S/I,S/I) =T(I),
$$
where the latter holds since $\Ext^1_S(S/I,S/I) = \Ext^0_S(I,S/I)$  by
homological ``dimension shift''.
By Lemma \ref{LemmaSerreDuality} we have $\Ext_S^\ell(S/I,S/I)^\vee \cong \Ext_S^{3-\ell}(S/I,S/I)(-1,-1,-1)$,
hence
\begin{align*}
\sum_{\substack{\alpha_1 = i\\ \alpha_2 = j }} \dim_\kk  \left| \Ext^0_S(S/I,S/I)\right|_\alpha &=  
\sum_{\substack{\alpha_1 = i\\ \alpha_2 = j }} \dim_\kk  \left| S/I\right|_\alpha = 
b_{i,j},\\
\sum_{\substack{\alpha_1 = i\\ \alpha_2 = j }} \dim_\kk  \left| \Ext^1_S(S/I,S/I)\right|_\alpha &= \sum_{\substack{\alpha_1 = i\\ \alpha_2 = j }} \dim_\kk  | T(I)|_\alpha,\\
\sum_{\substack{\alpha_1 = i\\ \alpha_2 = j }} \dim_\kk  \left| \Ext^2_S(S/I,S/I)\right|_\alpha &= \sum_{\substack{\alpha_1 = -i-1\\ \alpha_2 = -j-1 }} \dim_\kk  | T(I)|_\alpha,\\
\sum_{\substack{\alpha_1 = i\\ \alpha_2 = j }} \dim_\kk  \left| \Ext^3_S(S/I,S/I)\right|_\alpha &= 
\sum_{\substack{\alpha_1 = -i-1\\ \alpha_2 = -j-1 }} \dim_\kk  | S/I|_\alpha = 0.
\end{align*}
Equation \eqref{EquationBij} is then equivalent to
\begin{equation}\label{EquationEuler}
\sum_{\ell=0}^3 (-1)^\ell \sum_{\substack{\alpha_1 = i\\ \alpha_2 = j }} \dim_\kk  \big| \Ext^\ell_S(S/I,S/I)\big|_\alpha = 0.
\end{equation}

Let $I = (\bfx^{\beta^{(1)}}, \ldots, \bfx^{\beta^{(m)}})$
 and let $F_\bullet$ be the Taylor free resolution of $S/I$
\cite[4.3.2]{MillerSturmfels}.
The  modules in $F_\bullet$ are given by
\begin{equation*}\label{EquationTaylor}
F_\ell = \bigoplus_{\substack{\mathcal{A}\subseteq\{1, \ldots,m\}\\ \Card(\mathcal{A}) = \ell }}  S \big( - \beta^{\mathcal{A}}\big)
\qquad \text{where} \quad
\bfx^{\beta^{\mathcal{A}}} = \lcm \big\{\bfx^{\beta^{(a)}} \, : \, a \in \mathcal{A}\big\}.
\end{equation*}
Since $\Ext^\ell_S(S/I, S/I) = H^\ell\big( \Hom_S(F_\bullet, S/I)\big) = H^\ell\big( \Hom_{S/I}(F_\bullet/IF_\bullet, S/I)\big)$,
we can rephrase \eqref{EquationEuler} as
\begin{equation}\label{EquationEuler2}
\sum_{\ell=0}^m (-1)^\ell \sum_{\substack{\alpha_1 = i\\ \alpha_2 = j }} \dim_\kk  \left| \Hom_{S/I}(F_\ell/IF_\ell, S/I)\right|_\alpha = 0.
\end{equation}
Define for  each $\mathcal{A}\subseteq \{1, \ldots, m\}$ the quantity
$$
t_\mathcal{A} = \sum_{\substack{\alpha_1 = i\\ \alpha_2 = j }} \dim_\kk  \big| \Hom_{S/I}\big( {S}/{I}\big(-\beta^{\mathcal{A}}\big), S/I\big)\big|_\alpha.
$$
then \eqref{EquationEuler2} is equivalent to 
\begin{equation}\label{EquationEuler3}
\sum_{\mathcal{A} \subseteq \{1, \ldots, m\}} (-1)^{\Card(\mathcal{A})}t_\mathcal{A} = 0.
\end{equation}
Note that for each $\alpha$ and $\mathcal{A}$ we have
$$
\big| \Hom_{S/I}\big( {S}/{I}\big(-\beta^{\mathcal{A}}\big), S/I\big)\big|_\alpha = 
 \big| \Hom_{S/I}\big( {S}/{I}, S/I\big)\big|_{\alpha + \beta^{\mathcal{A}}}
=  \big| {S}/{I}\big|_{\alpha + \beta^{\mathcal{A}}}
$$
so that 
$$
\dim_\kk\big| \Hom_{S/I}\big( {S}/{I}\big(-\beta^{\mathcal{A}}\big), S/I\big)\big|_\alpha =
 \begin{cases} 
1 &\mbox{if } \alpha + \beta^{\mathcal{A}} \in \N^3 \setminus \tilde{I},\\ 
0  &\mbox{otherwise.} 
\end{cases}
$$
Adding over all $\alpha_3 \in \Z$ 
we get
$t_\mathcal{A} = \Card \big\{ \alpha_3 \in \Z \, : \, (i,j,\alpha_3) + \beta^{\mathcal{A}} \in \N^3 \setminus \tilde{I}\big\}$,
that is,
in terms of the  $\kk[z]-$decomposition of $I$,
\begin{equation}\label{EquationTA}
t_\mathcal{A} = b_{i+\beta^{\mathcal{A}}_1, j+\beta^{\mathcal{A}}_2}.
\end{equation}
Assuming without loss of generality that $\bfx^{\beta^{(m)}} = z^{b_{0,0}}$, 
the formula \eqref{EquationTA} immediately implies that $t_\mathcal{A} = t_{\mathcal{A}\cup\{m\}}$ for every $\mathcal{A}$, 
which in turn yields \eqref{EquationEuler3} and concludes the proof.
\end{proof}

The following consequence of Proposition \ref{PropositionDualityBij} is the main result of the section.

\begin{thm}\label{TheoremPairingSignatures}
Let $[I]\in \Hilb^d\AA^3$ be a monomial point.
We have
\begin{eqnarray*}
\dim_\kk  T_{\ppn}(I) = \dim_\kk  T_{\nnp}(I) + d, \\
\dim_\kk  T_{\pnp}(I) = \dim_\kk  T_{\npn}(I) + d,  \\
\dim_\kk  T_{\npp}(I) = \dim_\kk  T_{\pnn}(I) + d.
\end{eqnarray*}
\end{thm}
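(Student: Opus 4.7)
The plan is to deduce all three identities of Theorem \ref{TheoremPairingSignatures} directly from Proposition \ref{PropositionDualityBij}, by summing the identity \eqref{EquationBij} over appropriate lattice regions and invoking the vanishing statement contained in the proof of Proposition \ref{PropositionDecompositionDistinguishedSubspaces}. Since all the hard work is in Proposition \ref{PropositionDualityBij}, the theorem itself is essentially a matter of bookkeeping.

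To prove the first equation, I would fix the $\kk[z]$-decomposition $I = \bigoplus x^i y^j (z^{b_{i,j}})$ and sum \eqref{EquationBij} over all pairs $(i,j) \in \N^2$. On the left-hand side one obtains
$$
\sum_{\alpha_1 \geq 0,\, \alpha_2 \geq 0} \dim_\kk |T(I)|_\alpha,
$$
and, since $|T(I)|_\alpha = 0$ whenever all coordinates of $\alpha$ are nonnegative (Proposition \ref{PropositionDecompositionDistinguishedSubspaces}), only the graded pieces with $\alpha_3 < 0$ contribute, giving precisely $\dim_\kk T_{\ppn}(I)$. Next, the constant term $\sum_{i,j \geq 0} b_{i,j}$ equals $d$, because the $\kk[z]$-decomposition of $I$ yields $S/I = \bigoplus x^iy^j \kk[z]/(z^{b_{i,j}})$, so $\sum_{i,j} b_{i,j} = \dim_\kk(S/I) = d$. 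Finally, the remaining sum on the right-hand side runs over $(\alpha_1, \alpha_2) = (-i-1, -j-1)$ with $i,j \geq 0$, i.e. over $\alpha_1 < 0$ and $\alpha_2 < 0$; the vanishing of $|T(I)|_\alpha$ when all coordinates are negative forces $\alpha_3 \geq 0$, so this sum collapses to $\dim_\kk T_{\nnp}(I)$. Putting these three computations together yields the first identity.

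The remaining two identities then follow by the obvious $\mathfrak{S}_3$-symmetry: Proposition \ref{PropositionDualityBij} holds verbatim after permuting the roles of $x, y, z$, because its proof only uses the multigraded duality of Lemma \ref{LemmaSerreDuality} and the Taylor resolution, both of which are invariant under variable permutations. Summing the analogue of \eqref{EquationBij} for the $\kk[y]$-decomposition over $(i,k) \in \N^2$ gives $\dim_\kk T_{\pnp}(I) = \dim_\kk T_{\npn}(I) + d$, and summing the analogue for the $\kk[x]$-decomposition over $(j,k) \in \N^2$ gives $\dim_\kk T_{\npp}(I) = \dim_\kk T_{\pnn}(I) + d$.

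The main obstacle has already been addressed in Proposition \ref{PropositionDualityBij}, whose proof is the technical core; relative to that, the deduction above is essentially a change of index combined with the observation that the constant-signature graded pieces of $T(I)$ vanish. The only point that requires minor care is matching up the ranges of summation with the correct signature components, which is handled by the two applications of Proposition \ref{PropositionDecompositionDistinguishedSubspaces} above.
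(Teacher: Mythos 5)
Your proposal is correct and follows exactly the paper's own argument: sum the identity of Proposition \ref{PropositionDualityBij} over all $i,j\in\N$, use the vanishing of the constant-signature graded pieces from Proposition \ref{PropositionDecompositionDistinguishedSubspaces} to identify the two tangent-space sums with $T_{\ppn}(I)$ and $T_{\nnp}(I)$ and the identity $\sum_{i,j}b_{i,j}=d$, then obtain the other two equations by permuting the variables. The paper states this in two lines; your write-up simply supplies the bookkeeping details.
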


\begin{proof}
The first equation follows from Proposition \ref{PropositionDualityBij} 
by adding over all $i,j \in \N$, and using Proposition \ref{PropositionDecompositionDistinguishedSubspaces}.
The other two follow from the first  by permutation.
\end{proof}

Theorem \ref{TheoremPairingSignatures} provides the correct generalization of  \eqref{EquationPnNpA2} to $\AA^3$, since 
it implies
$$
\dim_\kk  T_{\p\n\ast}(I) = \dim_\kk  T_{\n\p\ast}(I), \quad
\dim_\kk  T_{\p\ast\n}(I) = \dim_\kk  T_{\n\ast\p}(I), \quad
\dim_\kk  T_{\ast\p\n}(I) = \dim_\kk  T_{\ast\n\p}(I),
$$
where   e.g. $ T_{\p\n\ast}(I) = T_{\pnp}(I) \oplus T_{\pnn}(I)$.
To the best of our knowledge, Proposition \ref{PropositionDualityBij} and Theorem \ref{TheoremPairingSignatures} do not extend to higher dimensions.

Theorem \ref{TheoremPairingSignatures} is  also a vast generalization of the following parity result of Maulik, Nekrasov, Okounkov, and Pandharipande, which follows from \cite[Theorem 2]{MNOP},
see also \cite[Lemma 4.1 (c)]{BehrendFantechi}.

\begin{cor}\label{CorollaryParity}
For each monomial point  $[I]\in \Hilb^d\AA^3$ we have
$
\dim_\kk T(I) \equiv d \bmod 2 
$.
\end{cor}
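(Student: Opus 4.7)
The plan is to derive the corollary as a direct arithmetic consequence of Theorem \ref{TheoremPairingSignatures} combined with the decomposition in Proposition \ref{PropositionDecompositionDistinguishedSubspaces}. Since the theorem pairs up the six distinguished subspaces into three pairs of opposite signature, each pair differs by exactly $d$, so summing the pairs will produce a total of $3d$ modulo an even contribution.

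More concretely, first I would invoke Proposition \ref{PropositionDecompositionDistinguishedSubspaces} to write
\[
\dim_\kk T(I) = \sum_{\mathfrak{s}\in \mathfrak{S}} \dim_\kk T_{\mathfrak{s}}(I),
\]
grouping the six summands into the three pairs $\{\ppn,\nnp\}$, $\{\pnp,\npn\}$, $\{\npp,\pnn\}$. Next, for each pair I would apply the corresponding equality from Theorem \ref{TheoremPairingSignatures} to eliminate the \textquotedblleft$\p$\textquotedblright{}-dominated summand in favor of the opposite one, obtaining
\[
\dim_\kk T_{\ppn}(I) + \dim_\kk T_{\nnp}(I) = 2\dim_\kk T_{\nnp}(I) + d,
\]
and similarly $2\dim_\kk T_{\npn}(I)+d$ and $2\dim_\kk T_{\pnn}(I)+d$ for the other two pairs.

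Summing the three identities yields
\[
\dim_\kk T(I) = 2\bigl(\dim_\kk T_{\nnp}(I) + \dim_\kk T_{\npn}(I) + \dim_\kk T_{\pnn}(I)\bigr) + 3d,
\]
which reduces modulo $2$ to $\dim_\kk T(I) \equiv 3d \equiv d \pmod 2$, as desired. There is essentially no obstacle here beyond bookkeeping: the content is entirely absorbed into Theorem \ref{TheoremPairingSignatures}, and the corollary is just the parity consequence of having an odd number (three) of $+d$ shifts.
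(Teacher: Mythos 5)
Your proof is correct and is exactly the intended argument: the paper states Corollary \ref{CorollaryParity} as an immediate consequence of Theorem \ref{TheoremPairingSignatures} together with the decomposition of Proposition \ref{PropositionDecompositionDistinguishedSubspaces}, which is precisely the pairing-and-summing computation you carry out. Nothing is missing.
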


Whether
$
\dim_\kk T(I) \equiv d \bmod 2 
$
for every 
$[I]\in \Hilb^d\AA^3$ is an open and interesting question;
see  \cite[Remark 22]{BryanKool}
 for related matters.
 A stronger open question is whether  for any  $[I]\in \Hilb^d\AA^3$ there exists a monomial $[M]\in \Hilb^d\AA^3$  such that 
$\dim_\kk T(I) = \dim_\kk T(M)$.

Another interesting special case of  Theorem \ref{TheoremPairingSignatures} occurs when each of the three equations is a small as possible: we obtain the following smoothness criterion for  monomial points in $\Hilb^d\AA^3$.

\begin{thm} \label{TheoremSmoothnessCriterion}
A monomial point $[I] \in \Hilb^d \AA^3 $ is smooth  if and only if 
$$
T_\mathfrak{s}(I)=0
\qquad
\text{for }\mathfrak{s} \in \{ \p\n\n, \n\p\n, \n\n\p\}.
$$
\end{thm}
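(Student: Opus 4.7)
The plan is to convert the vanishing condition on $T_{\pnn}(I)$, $T_{\npn}(I)$, $T_{\nnp}(I)$ into a single numerical identity via Theorem~\ref{TheoremPairingSignatures}, and then identify that identity with smoothness of $[I]$ on $\Hilb^d\AA^3$.

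First, I would sum the three equations of Theorem~\ref{TheoremPairingSignatures} and combine the result with the orthogonal decomposition $T(I) = \bigoplus_{\mathfrak{s}\in\mathfrak{S}} T_\mathfrak{s}(I)$ of Proposition~\ref{PropositionDecompositionDistinguishedSubspaces}. This yields
$$\dim_\kk T(I) \;=\; 3d + 2\bigl(\dim_\kk T_{\pnn}(I) + \dim_\kk T_{\npn}(I) + \dim_\kk T_{\nnp}(I)\bigr).$$
Hence $\dim_\kk T(I) \geq 3d$ for every monomial point, with equality precisely when all three subspaces vanish. The theorem is therefore reduced to the statement that $[I]$ is smooth if and only if $\dim_\kk T(I) = 3d$.

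For this remaining equivalence, I would invoke the principal component of $\Hilb^d\AA^3$, namely the closure of the locus of $d$ distinct reduced points, which is irreducible of dimension $3d$ and passes through the monomial point $[I]$ (realizable as the limit of a flat one-parameter family of reduced configurations, e.g.~via a Gröbner-type degeneration). Consequently $\dim_{[I]}\Hilb^d\AA^3 \geq 3d$. Combined with the universal bound $\dim_\kk T(I) \geq \dim_{[I]}\Hilb^d\AA^3$, which is an equality precisely at smooth points, and with the bound $\dim_\kk T(I) \geq 3d$ from the identity above, smoothness of $[I]$ forces both quantities to coincide with their common lower bound $3d$. This gives the sought equivalence $[I]$ smooth $\iff \dim_\kk T(I) = 3d$.

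The main subtle step is the smoothability claim used in the last paragraph, i.e.~that every monomial point of $\Hilb^d\AA^3$ lies on the principal component. This is delicate for large $d$ in view of the existence of elementary (non-smoothable) components in $\Hilb^d\AA^3$. However, it suffices to verify the weaker statement that \emph{whenever} $\dim_\kk T(I) = 3d$ the point $[I]$ sits on a unique $3d$-dimensional component (which must then be the principal one): the rigidity imposed by the upper bound $\dim_\kk T(I) = 3d$ eliminates the possibility of $[I]$ being the meeting point of several components, so $[I]$ is a smooth point of the principal component.
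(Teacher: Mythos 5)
Your first step coincides with the paper's: summing the three identities of Theorem \ref{TheoremPairingSignatures} and comparing with the decomposition of Proposition \ref{PropositionDecompositionDistinguishedSubspaces} gives
$\dim_\kk T(I) = 3d + 2\bigl(\dim_\kk T_{\pnn}(I)+\dim_\kk T_{\npn}(I)+\dim_\kk T_{\nnp}(I)\bigr)$,
which correctly reduces the theorem to the equivalence ``$[I]$ is smooth $\iff \dim_\kk T(I)=3d$''. The gap is in your treatment of that equivalence. Both directions require knowing that $[I]$ lies on the $3d$-dimensional principal component: for ``vanishing $\Rightarrow$ smooth'' one needs $\dim_{[I]}\Hilb^d\AA^3\geq 3d$ to meet the tangent space bound, and for ``smooth $\Rightarrow$ vanishing'' one must exclude that $[I]$ is a smooth point of some \emph{other} component of dimension strictly larger than $3d$ (components of dimension $>3d$ do exist for $d\gg 0$ by Iarrobino, so this is not vacuous). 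Your justification of the membership --- a ``Gr\"obner-type degeneration'' realizing $[I]$ as a limit of reduced configurations --- runs in the wrong direction: Gr\"obner degenerations exhibit \emph{some} monomial ideal as a flat limit of a given ideal, not a \emph{given} monomial ideal as a flat limit of reduced subschemes.

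Your fallback ``rigidity'' argument does not close the gap. The equality $\dim_\kk T(I)=3d$ only bounds the local dimension of $\Hilb^d\AA^3$ at $[I]$ from \emph{above}; it neither produces a $3d$-dimensional component through $[I]$ nor prevents $[I]$ from lying on one or several components of smaller dimension while having an excess tangent space. Moreover, even if your ``weaker statement'' were established, it only handles the direction ``$\dim_\kk T(I)=3d \Rightarrow$ smooth'' and says nothing about the forward direction. The missing ingredient is the nontrivial but known fact that every monomial point of $\Hilb^d\AA^3$ lies in the closure of the locus of $d$ distinct points; this is exactly what the paper cites (\cite[4.15]{CEVV}) to conclude that $[I]$ is smooth if and only if $\dim_\kk T(I)=3d$.
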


\begin{proof}
It is known that a monomial point $[I]$  lies in the closure of the component of $\Hilb^d\AA^3$ parametrizing subschemes of  $d$ distinct points, see e.g.
 \cite[4.15]{CEVV}.
We deduce that  $[I]$ is a smooth point   if and only if $\dim_\kk T(I) = 3d$, 
and the statement follows by Theorem \ref{TheoremPairingSignatures}.
\end{proof}

The criterion can be particularly effective in proving that a point $[I]$ is singular:
 it suffices to exhibit a single tangent vector with forbidden signature.
In many cases, 
the existence of such tangent vector follows just by looking at the minimal generators of $I$.
We give two examples.

\begin{cor}\label{CorollarySingularByGenerators}
Let $[I] \in \Hilb^d \AA^3 $ be a monomial point.
Suppose the  minimal generating set of $I$ contains three monomials 
$
x^{\alpha_1}y^{\alpha_2}, 
x^{\beta_1}z^{\beta_3},
y^{\gamma_2}z^{\gamma_3}
 $
with 
$\alpha_1, \alpha_2, \beta_1, \beta_3, \gamma_2, \gamma_3 >0$
satisfying one of the following:
\begin{itemize}
\item $\alpha_1 \geq \beta_1$ and  $\alpha_2 \geq \gamma_2$;
\item $\beta_1 \geq \alpha_1$ and  $\beta_3 \geq \gamma_3$;
\item $\gamma_2 \geq \alpha_2$ and  $\gamma_3 \geq \beta_3$.
\end{itemize}
Then $[I]$ is a singular point.
\end{cor}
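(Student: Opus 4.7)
My approach is to use Theorem~\ref{TheoremSmoothnessCriterion}: to prove singularity it suffices to exhibit a nonzero tangent vector with one of the three forbidden signatures $\pnn$, $\npn$, $\nnp$. The three conditions in the statement cycle into one another under $(x,y,z)\mapsto(y,z,x)$, which also cycles the forbidden signatures, so we may assume condition~(i); the remaining cases follow by relabeling.

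Under condition~(i), I would apply Proposition~\ref{PropositionBasis} and exhibit a bounded connected component of $(\tilde I + \alpha) \setminus \tilde I$ inside $\N^3$ for a suitable $\alpha$ of signature $\pnn$. A natural choice is $\alpha=(\alpha_1,-1,-1)$, with the candidate component built around the point $\beta=(\alpha_1,\gamma_2-1,\gamma_3-1)\in\N^3$: its shift $\beta-\alpha=(0,\gamma_2,\gamma_3)$ is the exponent of $g_3$, so $\beta\in\tilde I+\alpha$; and the minimality of the three given generators together with condition~(i) should ensure that $\bfx^{\beta}$ is not in $I$, since the $y$-exponent $<\gamma_2\leq\alpha_2$ rules out $g_1$ and $g_3$, and minimality of $g_2$ together with condition~(i) handles $g_2$ (in the complementary case $\beta_3<\gamma_3$ one modifies $\beta$ accordingly).

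The core step is to show that the connected component of $\beta$ remains inside $\N^3$, hence is bounded. An escape across $\{p_2=0\}$ or $\{p_3=0\}$ would force a monomial of the form $x^{p_1-\alpha_1} y^{\gamma_2-1} z^{p_3+\gamma_3}$ (or the symmetric one) to lie in $I$; minimality of $g_3$ excludes $yz$- and $xyz$-generators with $y$-exponent below $\gamma_2$, minimality of $g_1$ controls $xy$-generators, and condition~(i) bounds the possible pure-$x$, pure-$z$, and $xz$-generators. The hardest part is exactly this case analysis over every possible type of minimal generator of $I$; in configurations where the naive choice of $\alpha$ above fails to isolate $\beta$ (for instance, when $I$ contains certain small generators that would open a new escape route), one enlarges the first coordinate of $\alpha$ — replacing $\alpha_1$ by a larger integer $k$ chosen in terms of the minimal pure-$x$ generator degree in $I$ — so that the bounding argument goes through uniformly.
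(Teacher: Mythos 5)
Your overall strategy (exhibit a tangent vector of forbidden signature and invoke Theorem \ref{TheoremSmoothnessCriterion}) matches the paper's, and the reduction to condition (i) by cyclically permuting the variables is legitimate. But the heart of your argument --- that for $\alpha=(\alpha_1,-1,-1)$ the point $\beta=(\alpha_1,\gamma_2-1,\gamma_3-1)$ lies outside $\tilde I$ and its connected component of $(\tilde I+\alpha)\setminus\tilde I$ stays in $\N^3$ --- is precisely what is not proved, and the non-membership claim is false as stated. It cannot be deduced from the minimality of the three named generators alone: besides the case $\beta_3<\gamma_3$ that you mention (where $x^{\beta_1}z^{\beta_3}$ itself divides $\bfx^\beta$), the ideal $I$ may possess a fourth minimal generator dividing $\bfx^\beta$. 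For instance, with $g_1=x^2y$, $g_2=xz^2$, $g_3=yz^2$ (so condition (i) holds), the monomial $x^2z$ is compatible with the minimality of all three and equals $\bfx^\beta$ exactly. Likewise, boundedness of the component requires controlling every minimal generator of $I$, not just $g_1,g_2,g_3$. You explicitly defer this case analysis as ``the hardest part'' and invoke an unspecified enlargement of $\alpha$ when the naive choice fails; since that is where the entire content of the proof lies, the argument has a genuine gap.

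The paper's proof avoids all of this by working in the socle. By \eqref{EquationSocleTangentSpace}, $\soc(T(I))\cong\Hom_\kk(I/\mm I,\soc(S/I))$, so one may send a single minimal generator to any socle monomial and all other generators to zero with no connectivity or boundedness verification (such a map corresponds to a singleton connected component, cf.\ Remark \ref{RemarkBasisSocle}). The minimal generators $x^{s_1},y^{s_2},z^{s_3}$ guarantee socle monomials $x^{\delta_1}y^{\delta_2}z^{s_3-1}$ with $\delta_1\le\beta_1-1$ and $\delta_2\le\gamma_2-1$ (and two symmetric ones); under condition (i), sending $x^{\alpha_1}y^{\alpha_2}$ to such a monomial yields a tangent vector in $T_{\nnp}(I)$. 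If you wish to repair your proof, reroute it through the socle in this way rather than completing the divisibility and escape analysis over all generators of $I$.
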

\begin{proof}
Since $\dim_\kk(S/I) < \infty$, 
there are also  minimal generators $x^{s_1}, y^{s_2}, z^{s_3}$,
and by minimality we get $s_1 > \alpha_1, \beta_1$, $s_2 > \alpha_2, \gamma_2$, $s_3> \beta_3, \gamma_3$. 
It follows that there are monomials
$$
x^{\delta_1}y^{\delta_2}z^{s_3-1},\quad
x^{\epsilon_1}y^{s_2-1}z^{\epsilon_3},\quad
x^{s_1-1}y^{\zeta_2}z^{\zeta_3}\quad
\in \soc\left( \frac{S}{I}\right)
$$
for some $\delta_1 \leq \beta_1-1, \delta_2 \leq \gamma_2-1, \epsilon_1 \leq \alpha_1-1, \epsilon_3 \leq \gamma_3-1, \zeta_2 \leq \alpha_2-1, \zeta_3 \leq \beta_3-1$.
By Remark \ref{RemarkBasisSocle} and by \eqref{EquationSocleTangentSpace} there are three maps
$\varphi_1, \varphi_2, \varphi_3 \in \soc(T(I)) \subseteq T(I)$ such that 
$$
\varphi_1(x^{\alpha_1}y^{\alpha_2}) = x^{\delta_1}y^{\delta_2}z^{s_3-1}, \quad
\varphi_2(x^{\beta_1}z^{\beta_3}) = x^{\epsilon_1}y^{s_2-1}z^{\epsilon_3}, \quad
\varphi_3(y^{\gamma_2}z^{\gamma_3}) = x^{s_1-1}y^{\zeta_2}z^{\zeta_3}.
$$
Using the hypothesis we derive
$\varphi_1 \in T_{\n\n\p}(I),$ or $\varphi_2 \in T_{\n\p\n}(I),$ or $\varphi_3 \in T_{\p\n\n}(I)$.
\end{proof}

\begin{cor}\label{CorollarySingularStronglyStable}
Let $[I] \in \Hilb^d \AA^3 $ be a strongly stable point.
Then $[I]$ is smooth  if and only if $x \in I$.
\end{cor}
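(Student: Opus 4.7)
The plan is to apply the smoothness criterion of Theorem \ref{TheoremSmoothnessCriterion}, which (via the argument therein) reduces $[I]$ being smooth to the equality $\dim_\kk T(I)=3d$.

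For the direction $x\in I\Rightarrow [I]$ smooth, I would first write $I=(x)+JS$ where $J:=I\cap\kk[y,z]$ is a $(y,z)$-primary monomial ideal of $\kk[y,z]$, so that $S/I$ coincides, as an $S$-module, with $\kk[y,z]/J$ on which $x$ acts trivially. A tangent vector $\varphi\in T(I)=\Hom_S(I,S/I)$ is determined by $\varphi(x)$ together with the restriction $\varphi|_{JS}$. For each minimal generator $g$ of $J$, the Koszul syzygy $g\cdot x=x\cdot g$ forces $g\cdot\varphi(x)=x\cdot\varphi(g)=0$ in $S/I$; but since $\varphi(x)$ represents an element of $\kk[y,z]/J$ and $J$ is an ideal, this relation is automatic. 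Hence $\varphi(x)$ ranges freely over $S/I$, contributing $d$ dimensions, while $\varphi|_{JS}$ ranges over $\Hom_{\kk[y,z]}(J,\kk[y,z]/J)=T_{[J]}\Hilb^d\AA^2$, of dimension $2d$ by Fogarty's theorem \cite{Fogarty}. Thus $\dim_\kk T(I)=3d$, so $[I]$ is smooth.

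For the converse, I argue contrapositively: assume $I$ strongly stable with $x\notin I$, and produce three minimal generators satisfying one of the conditions of Corollary \ref{CorollarySingularByGenerators}. Let $s,p,q$ denote the smallest powers of $x,y,z$ lying in $I$. Strong stability applied iteratively to $y^p$ and $z^q$ yields $x^p,y^q\in I$, so $s\leq p\leq q$; combined with $x\notin I$ this gives $2\leq s\leq p\leq q$. For each pair of variables define the staircase sequences $a_i=\min\{k:x^iy^k\in I\}$, $b_i=\min\{k:x^iz^k\in I\}$, $c_j=\min\{k:y^jz^k\in I\}$; strong stability forces each sequence to be strictly decreasing on its support. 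One then checks directly that $x^{s-1}y^{a_{s-1}}$, $x^{s-1}z^{b_{s-1}}$, $y^{p-1}z^{c_{p-1}}$ are minimal generators of $I$ in which every exponent is at least $1$.

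With $\alpha_1=\beta_1=s-1$, the first two conditions of Corollary \ref{CorollarySingularByGenerators} reduce to $a_{s-1}\geq p-1$ and $b_{s-1}\geq c_{p-1}$ respectively; if both fail, then $p-1>a_{s-1}$ and $c_{p-1}>b_{s-1}$, which is precisely the third condition. Hence one of the three conditions of Corollary \ref{CorollarySingularByGenerators} always holds, and $[I]$ is singular. The main obstacle is the combinatorial bookkeeping of the second half --- verifying that the three monomials really are \emph{minimal} generators and that the case analysis is exhaustive --- both of which follow from strict monotonicity of the three staircases, itself a direct consequence of strong stability combined with $x\notin I$.
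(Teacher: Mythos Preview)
Your proof is correct but differs from the paper's in both directions. For $x\in I$, the paper works directly with the signature criterion of Theorem~\ref{TheoremSmoothnessCriterion}, checking that $T_{\pnn}(I)=T_{\npn}(I)=T_{\nnp}(I)=0$ by analyzing the possible multidegrees of a nonzero $\varphi$; your generators-and-relations count, reducing to Fogarty's theorem for $\Hilb^d\AA^2$, is more structural and bypasses the signature decomposition entirely (the Koszul syzygies $g_i\cdot x - x\cdot g_i$ really do impose no constraint, and the remaining syzygies of $I$ are exactly those of $J$ extended from $\kk[y,z]$, so the dimension count $d+2d$ is valid). For $x\notin I$, the paper's argument is much shorter: it simply notes that if $z^{s_3}$ is the minimal pure $z$-power in $I$ then strong stability forces $xz^{s_3-1},yz^{s_3-1}\in I$, hence $z^{s_3-1}\in\soc(S/I)$, and the socle map sending a minimal generator $xy^a$ to $z^{s_3-1}$ already lies in $T_{\nnp}(I)$. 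Your route through Corollary~\ref{CorollarySingularByGenerators} also works --- the strict monotonicity of the staircases $a_i,b_i,c_j$ on their supports is exactly the strong-stability exchange, and the trichotomy at the end is exhaustive since $\alpha_1=\beta_1$ makes the first halves of conditions~(i) and~(ii) automatic --- but it requires noticeably more combinatorial bookkeeping to reach the same conclusion.
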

\begin{proof}
Assume $x\notin I$ and let $z^{s_3} \in I$ be a minimal generator.
By strong stability,  $xy^{a}$ is a minimal generator for some $a>0$, and moreover $xz^{s_3-1}, yz^{s_3-1} \in I$,
thus $z^{s_3-1} \in \soc(S/I)$.
By Remark \ref{RemarkBasisSocle} and by \eqref{EquationSocleTangentSpace} there is a map
$\varphi\in \soc(T(I))\subseteq T(I)$ such that $\varphi(xy^{a}) = z^{s_3-1}$, so $\varphi \in T_{\n\n\p}(I)\ne 0$.

Now assume $x\in I$.
Then $\gamma_1 = 0$ for all $\bfx^\gamma \in S/I$,
and  $\beta_1=0$ for all generators $ \bfx^\beta \ne x$ of $I$.
Let $\varphi  \in |T(I)|_\alpha$ for some $\alpha$.
If $\varphi(x) \ne 0$ then $\alpha_2, \alpha_3 \geq 0$, so $\varphi \in T_\npp(I)$.
Suppose $\varphi(\bfx^\beta) \ne 0$ for some generator $x \ne \bfx^\beta \in I$, then  $\alpha_1 = 0 $ since $\beta_1=0$.
Assume by contradiction that $\alpha_2, \alpha_3 <0$.
Considering the ``boundary'' 
$B = \tilde{I}\,\setminus \big( \tilde{I} + (0, 1, 1) \big)$
and arguing as in Proposition \ref{PropositionDecompositionDistinguishedSubspaces},
we see that $(\tilde{I}+\alpha)\setminus \tilde{I}$ is connected and unbounded.
This contradicts Proposition \ref{PropositionBasis},
thus $\alpha_2 \geq 0$ or $\alpha_3 \geq 0$, and  $\varphi \in T_{\ppn}(I)\oplus T_{\pnp}(I)$.
We conclude that $T_{\p\n\n}(I) =T_{\n\p\n}(I) = T_{\n\n\p}(I)=0$.
\end{proof}

\section{Extremality of subspaces of the tangent space}\label{SectionExtremality}

In this section we prove
Theorem \ref{TheoremExtremalSubspaces},
confirming the extremal behavior predicted by Conjecture \ref{BIconj1}
for certain components $T_\mathfrak{s}(I)$ of the tangent space.

\begin{prop} \label{PropositionBoundPPN} 
Let  $[I] \in \Hilb^d \AA^3$ be a monomial point with 
$\kk[z]-$decomposition $I=\bigoplus x^iy^j\left(z^{b_{i,j}}\right)$. 
For each $\alpha_1,\alpha_2 \geq 0$ we have the inequality
\begin{equation*}
 \sum_{\alpha_3<0} \dim_\kk \big|T(I)\big|_{(\alpha_1,\alpha_2,\alpha_3)} 
 \leq \sum_{\substack{i \geq \alpha_1\\ j \geq \alpha_2}} \left(b_{i,j} - \max\{b_{i+1,j},b_{i,j+1}\}\right).
\end{equation*}
\end{prop}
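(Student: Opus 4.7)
The plan is to translate both sides into combinatorial counts and build an explicit injection. By Proposition \ref{PropositionBasis}, the left-hand side equals the total number of pairs $(\alpha_3, U)$ where $\alpha_3 < 0$ and $U$ is a bounded connected component of $V_\alpha := (\tilde{I} + \alpha) \setminus \tilde{I}$, with $\alpha = (\alpha_1, \alpha_2, \alpha_3)$. The right-hand side is the cardinality of the ``corner set''
\begin{equation*}
A := \big\{(i, j, k) \in \N^3 \,:\, i \geq \alpha_1,\; j \geq \alpha_2,\; \max\{b_{i+1,j},\, b_{i,j+1}\} \leq k < b_{i,j}\big\},
\end{equation*}
since for each $(i,j)$ with $i \geq \alpha_1, j \geq \alpha_2$ there are precisely $b_{i,j} - \max\{b_{i+1,j}, b_{i,j+1}\}$ admissible values of $k$. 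I shall construct an injection $\Phi$ from the collection of pairs $(\alpha_3, U)$ into $A$.

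To define $\Phi$, given a bounded component $U$, let $(i_0, j_0)$ be the lexicographically largest pair occurring as the $(x,y)$-projection of a point of $U$, and set $k_0 := \min\{k \,:\, (i_0, j_0, k) \in U\}$. Since consecutive $z$-values differ by unit vectors and hence are adjacent, the entire $V_\alpha$-column over $(i_0, j_0)$, namely $\{k \in \Z \,:\, b_{i_0 - \alpha_1, j_0 - \alpha_2} + \alpha_3 \leq k < b_{i_0, j_0}\}$, lies in $U$. Boundedness of $U$ forces the bottom of this column to be nonnegative, so $k_0 = b_{i_0 - \alpha_1, j_0 - \alpha_2} + \alpha_3$. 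Set $\Phi(\alpha_3, U) := (i_0, j_0, k_0)$.

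Next I will check $(i_0, j_0, k_0) \in A$. The upper bound $k_0 < b_{i_0, j_0}$ follows from $(i_0, j_0, k_0) \notin \tilde{I}$, while $i_0 \geq \alpha_1, j_0 \geq \alpha_2$ is built into membership in $U \subseteq V_\alpha \cap \N^3$. For the lower bound, lex-maximality of $(i_0, j_0)$ forces the neighbors $(i_0+1, j_0, k_0)$ and $(i_0, j_0+1, k_0)$ to lie outside $U$; since $\tilde{I}$ is stable under $+\mathbf{e}_1$ and $+\mathbf{e}_2$, these two neighbors nonetheless belong to $\tilde{I} + \alpha$, and therefore they must in fact lie in $\tilde{I}$. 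This yields $k_0 \geq b_{i_0+1, j_0}$ and $k_0 \geq b_{i_0, j_0+1}$ simultaneously.

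Injectivity is the decisive step, and the place where boundedness is indispensable. From $(i_0, j_0, k_0)$ one recovers $\alpha_3 = k_0 - b_{i_0 - \alpha_1, j_0 - \alpha_2}$, after which $U$ is determined as the $V_\alpha$-component through $(i_0, j_0, k_0)$. The main obstacle is the borderline case $k_0 = 0$: a priori the bottom of the column inside $\N^3$ equals $\max\{0,\, b_{i_0 - \alpha_1, j_0 - \alpha_2} + \alpha_3\}$, so many different $\alpha_3$ could yield $k_0 = 0$, and the formula for $\alpha_3$ would fail. The boundedness hypothesis on $U$ is precisely what rules out the regime $b_{i_0 - \alpha_1, j_0 - \alpha_2} + \alpha_3 < 0$, since any such negative bottom would drag $U$ below the plane $z = 0$. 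Once this is handled, $\alpha_3$ is pinned down uniquely, $\Phi$ is an injection, and the proposition follows.
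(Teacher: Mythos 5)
Your proof is correct and takes essentially the same approach as the paper's: both sides are counted combinatorially, and an injection is constructed from the set of bounded connected components (over all $\alpha_3<0$) into the corner set of points $(i,j,k)\notin\tilde{I}$ with $(i+1,j,k),(i,j+1,k)\in\tilde{I}$. The only real difference is the choice of distinguished point: you take the bottom of the lexicographically largest column of $U$, which lets you recover $\alpha_3$ directly from the image and makes injectivity immediate, whereas the paper picks a point of minimal $z$-coordinate whose $+\mathbf{e}_1,+\mathbf{e}_2$ neighbours leave $U$ and proves injectivity by a nesting argument comparing components for different $\alpha_3$.
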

\begin{proof}
Fix non-negative integers $\alpha_1, \alpha_2$,
and define the sets
\begin{align*}
\mathcal{C} &= \bigcup_{\alpha_3 <0} \Big\{\text{bounded connected components of } \left(\tilde{I}+ (\alpha_1,\alpha_2,\alpha_3)\right)  \setminus \, \tilde{I} \Big\},\\
\mathcal{S} & = \Big\{(i,j,k)\notin \tilde{I}\,:\, i \geq \alpha_1, j \geq \alpha_2 \text{ and }(i+1,j,k),(i,j+1,k)\in \tilde{I}\Big\}.
\end{align*}
We define a map $\Psi : \mathcal{C} \rightarrow \mathcal{S}$ by choosing, 
for each $U \in \mathcal{C}$,  a vector $\Psi(U) = (\psi^U_1, \psi^U_2, \psi^U_3)\in U $ such that $\psi^U_3$ is the least possible among  vectors in $ U$, and  
$(\psi^U_1+1, \psi^U_2, \psi^U_3), (\psi^U_1, \psi^U_2+1, \psi^U_3)\notin U$. 
The choice is possible as $\Card(U) < \infty$.
Since $U$ is a bounded  connected component of $(\tilde{I}+ (\alpha_1,\alpha_2,\alpha_3)) \setminus \tilde{I}$ for some $\alpha_3$,
the vector $\Psi(U)$ is indeed in $\mathcal{S}$.

We claim that the map $\Psi$ is injective.
Let  $U\ne U'$ be bounded  components of $\left(\tilde{I}+ (\alpha_1,\alpha_2,\alpha_3)\right)\setminus \, \tilde{I}$  and $\left(\tilde{I}+ (\alpha_1,\alpha_2,\alpha'_3)\right)\setminus \, \tilde{I}$, respectively, for some $\alpha_3, \alpha'_3 <0$.
If $\alpha_3 = \alpha'_3$ then $U\cap U' = \emptyset$  by definition of connected component, hence $\Psi(U) \ne \Psi({U'})$.
Suppose now $\alpha_3 < \alpha'_3$,
thus
$\left(\tilde{I}+ (\alpha_1,\alpha_2,\alpha'_3)\right)\setminus \, \tilde{I}\subsetneq \left(\tilde{I}+ (\alpha_1,\alpha_2,\alpha_3)\right)\setminus \, \tilde{I}$.
If $U\cap U' \ne \emptyset$ then necessarily $U' \subsetneq U$, and this implies 
$\Psi(U') + (0,0,  \alpha_3 - \alpha_3') \in U$.
We conclude that $\psi^U_3 \leq \psi^{U'}_3 + \alpha_3 - \alpha'_3< \psi^{U'}_3$, in particular $\Psi(U) \ne \Psi({U'})$ as claimed.

Note that, for each pair $i,j$, we have  $\Card\big\{(i,j,k)\notin \tilde{I}\,:\, (i+1,j,k),(i,j+1,k)\in \tilde{I}\big\}= b_{i,j} - \max\{b_{i+1,j},b_{i,j+1}\}$.
We deduce that 
$$
\Card(\mathcal{C}) \leq \Card(\mathcal{S})=\sum_{\substack{i \geq \alpha_1\\ j \geq \alpha_2}} \left(b_{i,j} - \max\{b_{i+1,j},b_{i,j+1}\}\right)
$$
concluding the proof by Proposition \ref{PropositionBasis}.
 \end{proof}

By combining the inequalities for all $\alpha_1, \alpha_2\geq 0$ Proposition \ref{PropositionBoundPPN} provides  upper bounds for $T_{\ppn}(I)$ and, 
up to permutation, for $T_{\pnp}(I)$ and $T_{\npp}(I)$.
Using the symmetries of Section \ref{SectionSymmetry},
we also obtain estimates for the other three signatures.
We are going to apply these  bounds to Borel-fixed points.
Before we can present the main result, we need  some lemmas 
about strongly stable ideals and powers of $\mm$.

\begin{lemma} \label{LemmaSimplifyMax} 
Let    $[I] \in \Hilb^d \AA^3$ be  a strongly stable point with $\kk[z]-$ and $\kk[y]-$decompositions
 $$
 I=\bigoplus x^iy^j\big(z^{b^z_{i,j}}\big)=\bigoplus x^iz^j\big(y^{b^y_{i,j}}\big).
 $$
Then $\max\{b^{z}_{i+1,j},b^{z}_{i,j+1}\} = b^{z}_{i,j+1}$ and $\max\{b^{y}_{i+1,j},b^{y}_{i,j+1}\} = b^{y}_{i,j+1}$ for all $i,j$.
\end{lemma}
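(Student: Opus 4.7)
The lemma reduces to proving the two inequalities $b^z_{i+1,j} \le b^z_{i,j+1}$ and $b^y_{i+1,j} \le b^y_{i,j+1}$, since $\max\{a,b\}=b$ precisely when $a \le b$. My plan is to derive each of these from a single application of the strong stability exchange property (Lemma \ref{LemmaBorelExchangeChar0}), applied to an appropriately chosen witness monomial of $I$.

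For the $\kk[z]$-inequality, I would start from the monomial $x^i y^{j+1} z^{b^z_{i,j+1}}$, which lies in $I$ by the defining property of the $\kk[z]$-decomposition at the index $(i,j+1)$. Its $y$-exponent is positive, and since $y$ carries a larger variable-index than $x$, the strong stability condition produces $x^{i+1} y^{j} z^{b^z_{i,j+1}} \in I$. Reading off the $\kk[z]$-decomposition at the pair $(i+1,j)$ then forces $b^z_{i+1,j} \le b^z_{i,j+1}$, as required.

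For the $\kk[y]$-inequality, an entirely analogous argument works with the monomial $x^i y^{b^y_{i,j+1}} z^{j+1} \in I$: here the $z$-exponent is positive, so strong stability permits swapping $z$ for the smaller-index variable $x$, yielding $x^{i+1} y^{b^y_{i,j+1}} z^{j} \in I$ and thus $b^y_{i+1,j} \le b^y_{i,j+1}$ by inspecting the $\kk[y]$-decomposition at $(i+1,j)$.

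There is essentially no obstacle here: the proof is a direct unwinding of the strong stability condition applied to the \emph{natural} witness monomials supplied by each decomposition. The only conceptual point to flag is that in both cases the exchange sends an exponent \emph{into} the smallest-index variable $x$, which is precisely what converts a witness indexed by $(i,j+1)$ into one indexed by $(i+1,j)$ and produces the predicted asymmetry in the maximum.
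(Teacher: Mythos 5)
Your proof is correct and is essentially identical to the paper's: the paper also applies strong stability to the witness $x^i y^{j+1} z^{b^z_{i,j+1}} \in I$ to conclude $x^{i+1} y^{j} z^{b^z_{i,j+1}} \in I$ and hence $b^z_{i+1,j} \leq b^z_{i,j+1}$, and handles the $\kk[y]$-decomposition by the symmetric argument.
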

\begin{proof} 
Since $I$ is strongly stable,
$x^iy^{j+1}z^{b^{z}_{i,j+1}} \in I$ implies $x^{i+1}y^{j}z^{b^{z}_{i,j+1}} \in I$,
 thus, by definition $b^{z}_{i+1,j} \leq b^{z}_{i,j+1}$ i.e. $\max\{b^{z}_{i+1,j},b^{z}_{i,j+1}\} = b^{z}_{i,j+1}$. 
 The other equation is proved similarly. 
\end{proof}

\begin{lemma} \label{LemmaBorelComponents}
Let    $[I] \in \Hilb^d \AA^3$ be  a strongly stable point with 
$\kk[y,z]-$decomposition $I = \bigoplus x^i I_i$.
Then for every $i\geq 0$ the ideal $I_i$ is strongly stable, and we have $I_i : y \subseteq I_{i+1}$.
\end{lemma}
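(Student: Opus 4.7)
The plan is to reduce both assertions to one-step applications of the strong stability exchange rule (Lemma \ref{LemmaBorelExchangeChar0}) applied to the ambient ideal $I$. The bridge between $I$ and the components $I_i \subseteq \kk[y,z]$ is the tautological equivalence
\[
y^a z^b \in I_i \iff x^i y^a z^b \in I,
\]
so each claim reduces to a one-line monomial manipulation inside $I$.

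For the strong stability of $I_i$ in $\kk[y,z]$, I would take a monomial $y^a z^b \in I_i$ with $b > 0$. Then $x^i y^a z^b \in I$, and applying strong stability of $I$ with the exchange that trades a factor of $z$ for a factor of $y$ (the pair $i_{\mathrm{new}} = 2,\, j = 3$) produces $x^i y^{a+1} z^{b-1} \in I$, hence $y^{a+1} z^{b-1} \in I_i$. That is precisely the exchange condition for $I_i$ as a monomial ideal in two variables.

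For the colon inclusion, since $I_i : y$ is a monomial ideal it suffices to verify the containment on monomial generators. If $y^a z^b \in I_i : y$ then $y^{a+1} z^b \in I_i$, so $x^i y^{a+1} z^b \in I$; applying strong stability of $I$ with the exchange that trades a factor of $y$ for a factor of $x$ (the pair $i_{\mathrm{new}} = 1,\, j = 2$) yields $x^{i+1} y^a z^b \in I$, i.e.\ $y^a z^b \in I_{i+1}$. I do not anticipate any genuine obstacle here: both steps are essentially forced once the dictionary above is in place, and the only subtlety is selecting the correct pair of variables for the exchange in each case.
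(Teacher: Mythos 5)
Your proof is correct and is exactly the argument the paper intends: the paper's proof consists of the single sentence ``Both properties follow easily by strong stability,'' and your write-up simply makes the two one-step exchanges ($z \mapsto y$ within $x$-degree $i$, and $y \mapsto x$ to pass from $I_i$ to $I_{i+1}$) explicit via the dictionary $y^a z^b \in I_i \iff x^i y^a z^b \in I$. No gaps.
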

\begin{proof}
Both properties follow easily by strong stability.
\end{proof}

\begin{lemma} \label{LemmaExtremalSequence}
Let    $[I] \in \Hilb^d \AA^3$ be  a strongly stable point with 
$\kk[y,z]-$decomposition $I = \bigoplus x^i I_i$.
If $d\leq \dim_\kk(S/\mathfrak{m}^r)$ then for all $0\leq j \leq r$ we have 
\begin{equation}\label{InequalityPartialSequence}
\sum_{i=j}^{r-1} \dim_\kk \frac{\kk[y,z]}{I_i} \leq \sum_{i=j}^{r-1} \dim_\kk \frac{\kk[y,z]}{(y,z)^{r-i}}.
\end{equation}
Moreover, if equality holds for all $0\leq j \leq r-1$ then $I= {\mm}^r$.
\end{lemma}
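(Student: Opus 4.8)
The plan is to distill the statement to an arithmetic comparison of the colength sequence $c_i := \dim_\kk \kk[y,z]/I_i$ against the ``extremal'' sequence $\bar c_i := \dim_\kk \kk[y,z]/(y,z)^{r-i} = \binom{r-i+1}{2}$ for $0\le i\le r$ (and $\bar c_i := 0$ for $i>r$), which is exactly the colength sequence attached to $\mm^r$. By the hockey-stick identity $\sum_{i=j}^{r-1}\bar c_i = \binom{r-j+2}{3}$, so the inequality to prove reads $\sum_{i=j}^{r-1}c_i \le \binom{r-j+2}{3}$, while the hypothesis becomes $\sum_{i\ge 0}c_i = d \le \dim_\kk(S/\mm^r) = \binom{r+2}{3} = \sum_{i\ge 0}\bar c_i$. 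Since $\sum_i c_i < \infty$, each $I_i$ is $(y,z)$-primary, so writing $I_i = \bigoplus_j y^j(z^{b_{i,j}})$ for its $\kk[z]$-decomposition I set $m_i := b_{i,0} = \min\{b : z^b\in I_i\}$; note $(c_i)$ is non-increasing and $c_i = 0$ for $i \gg 0$. The two facts I would extract are: \textbf{(A)} $c_i \le \binom{m_i+1}{2}$, with equality iff $I_i = (y,z)^{m_i}$ --- because strong stability of $I_i$ (Lemma~\ref{LemmaBorelComponents}) forces $b_{i,j} \le \max\{m_i-j,0\}$, so $c_i = \sum_j b_{i,j} \le \sum_{j=0}^{m_i-1}(m_i-j) = \binom{m_i+1}{2}$, with equality exactly when $b_{i,j}=m_i-j$ for all $j<m_i$; and \textbf{(B)} $c_{i+1} \le c_i - m_i$ --- because $I_i:y \subseteq I_{i+1}$ (Lemma~\ref{LemmaBorelComponents}) gives $c_{i+1} \le \dim_\kk\kk[y,z]/(I_i:y)$, and the latter equals $\sum_{j\ge 1}b_{i,j} = c_i - m_i$.

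Facts (A) and (B) combine into a single monotonicity step: \emph{for every $R\ge 0$, if $c_i \le \binom{R+1}{2}$ then $c_{i+1}\le \binom{R}{2}$}. Indeed, if $m_i\ge R$ then by (B) $c_{i+1}\le c_i-m_i\le \binom{R+1}{2}-R=\binom{R}{2}$, while if $m_i\le R-1$ then by (A) $c_i\le\binom{m_i+1}{2}\le\binom{R}{2}$, so a fortiori $c_{i+1}\le c_i\le\binom{R}{2}$. Iterating this with $R=r-j, r-j-1,\ldots$ gives the \emph{key claim}: if $c_j\le\bar c_j$ for some $j$, then $c_i\le\bar c_i$ for all $i\ge j$. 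Now set $j_0 := \min\{i\ge 0 : c_i\le\bar c_i\}$, which is finite since both sequences eventually vanish; by the key claim $c_i\le\bar c_i$ for $i\ge j_0$ and $c_i>\bar c_i$ for $i<j_0$. If $j\ge j_0$, the target inequality holds term by term. If $j<j_0$, I would instead write $\sum_{i=j}^{r-1}c_i = d-\sum_{i=0}^{j-1}c_i-\sum_{i\ge r}c_i \le d-\sum_{i=0}^{j-1}c_i$, and since every $i<j\le j_0$ has $c_i>\bar c_i$ we get $\sum_{i=0}^{j-1}c_i\ge\sum_{i=0}^{j-1}\bar c_i$, hence $\sum_{i=j}^{r-1}c_i\le\binom{r+2}{3}-\sum_{i=0}^{j-1}\bar c_i=\binom{r-j+2}{3}$, again by the hockey-stick identity.

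For the equality statement, assume $\sum_{i=j}^{r-1}c_i=\sum_{i=j}^{r-1}\bar c_i$ for all $0\le j\le r-1$: taking $j=r-1$ gives $c_{r-1}=1$, and a downward induction on $j$ yields $c_i=\bar c_i$ for all $0\le i\le r-1$; comparing with $\sum_{i\ge 0}c_i=d\le\binom{r+2}{3}=\sum_{i=0}^{r-1}\bar c_i$ forces $\sum_{i\ge r}c_i=0$, so $c_i=\bar c_i$ for all $i$. To upgrade this to $I=\mm^r$: from $c_0=\binom{r+1}{2}\le\binom{m_0+1}{2}$ we get $m_0\ge r$, from (B) $m_0\le c_0-c_1=r$, so $m_0=r$ with equality in (A), i.e.\ $I_0=(y,z)^r$; inductively, $I_i=(y,z)^{r-i}$ gives $I_{i+1}\supseteq I_i:y=(y,z)^{r-i-1}$, and as these have equal colength $\binom{r-i}{2}=c_{i+1}$ they coincide; for $i\ge r$, $c_i=0$ forces $I_i=\kk[y,z]$, so $I=\bigoplus_i x^i I_i=\mm^r$. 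I expect the \textbf{main obstacle} to be the case $j<j_0$: there the $c_i$ with $i<j_0$ genuinely exceed $\bar c_i$ (e.g.\ when $I_0$ is a large power of $(y,z)$), so no term-by-term comparison is possible, and one must route the global colength bound $d\le\binom{r+2}{3}$ through the complementary partial sum $\sum_{i=0}^{j-1}c_i$; getting the bookkeeping of the three index ranges $[0,j-1]$, $[j,r-1]$, $[r,\infty)$ exactly right is the heart of the argument.
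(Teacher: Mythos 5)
Your proof is correct, and it rests on the same essential ingredients as the paper's: the $\kk[z]$-decomposition of each $I_i$, the bound $b_{i,j}\le\max\{m_i-j,0\}$ forced by strong stability, and the containment $I_i:y\subseteq I_{i+1}$ from Lemma~\ref{LemmaBorelComponents}. Your monotonicity step (``$c_i\le\binom{R+1}{2}$ implies $c_{i+1}\le\binom{R}{2}$'') is precisely the paper's key claim that $\dim_\kk \kk[y,z]/J\le\dim_\kk \kk[y,z]/(y,z)^h$ forces $\dim_\kk \kk[y,z]/(J:y)\le\dim_\kk \kk[y,z]/(y,z)^{h-1}$, just derived through your facts (A) and (B) rather than through the explicit generators $y^{a-i}z^{c_i}$. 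The one genuine structural difference is how the indices with $c_i>\bar c_i$ are handled: the paper runs an induction on $\min\{h:x^h\in I\}$, peeling off $I_0$ and replacing $(I,r)$ by $(I'=\bigoplus x^{i-1}I_i,\ r-1)$, whereas you avoid any induction by splitting at $j_0=\min\{i:c_i\le\bar c_i\}$ and, for $j<j_0$, bounding the suffix sum via its complement, $\sum_{i=0}^{j-1}c_i\ge\sum_{i=0}^{j-1}\bar c_i$ together with the global hypothesis $d\le\binom{r+2}{3}$. Both arguments are sound; yours is somewhat more self-contained and makes the role of the global length bound more transparent. Your equality analysis (forcing $m_0=r$, hence $I_0=(y,z)^r$, then propagating via $I_i:y$) matches the paper's.
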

\begin{proof} 
Observe that $\mm^r$ has $\kk[y,z]-$decomposition $\mm^r = \bigoplus x^i (y,z)^{r-i}$,
with the convention that  $(y,z)^h = \kk[y,z]$ if $h<0$.

Suppose first    $\dim_\kk \big({\kk[y,z]}/{I_0}\big) \geq  \dim_\kk \big({\kk[y,z]}/{(y,z)^{r}}\big)$.
We  prove the inequalities \eqref{InequalityPartialSequence} by induction on $\ell = \min\{ h \, : \, x^h \in I\}$.
The  case  $\ell = 1$  is clear, so we  assume $\ell > 1$. 
Define $I' = \bigoplus  x^{i-1}I_i \subseteq S$, then $x^{\ell-1}\in I$ and 
\begin{equation*}
\dim_\kk(S/I') = \dim_\kk(S/I) -\dim_\kk\big(\kk[y,z]/I_0\big) \leq \dim_\kk(S/\mm^r) - \dim_\kk\big(\kk[y,z]/(y,z)^r\big) = \dim_\kk(S/{\mm}^{r-1}).
\end{equation*}
Applying the inductive step to $I'$ and $\mm^{r-1}$ we deduce 
\begin{equation*}
\sum_{i=j}^{r-1} \dim_\kk\frac{\kk[y,z]}{I_i} = \sum_{i=j-1}^{r-2} \dim_\kk\frac{\kk[y,z]}{I'_i}  \leq \sum_{i=j-1}^{r-2} \dim_\kk\frac{\kk[y,z]}{(y,z)^{r-1-i}} = \sum_{i=j}^{r-1} \dim_\kk\frac{\kk[y,z]}{(y,z)^{r-i}}.
\end{equation*}
verifying \eqref{InequalityPartialSequence} for all $1 \leq j \leq r-1$, while the case $j=0$ holds by assumption. 

Suppose  now that  $\dim_\kk \big({\kk[y,z]}/{I_0}\big) \leq  \dim_\kk \big({\kk[y,z]}/{(y,z)^{r}}\big)$.
We claim that $\dim_\kk \big({\kk[y,z]}/{I_i}\big) \leq  \dim_\kk \big({\kk[y,z]}/{(y,z)^{r-i}}\big)$ for all $i$,
implying  the  inequalities \eqref{InequalityPartialSequence}.
By Lemma \ref{LemmaBorelComponents} it suffices to verify  the following statement:
if  $J\subseteq \kk[y,z]$ is strongly stable 
and $\dim_\kk \big({\kk[y,z]}/{J}\big) \leq  \dim_\kk \big({\kk[y,z]}/{(y,z)^{h}}\big)$ for some $h$,
then $\dim_\kk \big({\kk[y,z]}/{(J:y)}\big) \leq  \dim_\kk \big({\kk[y,z]}/{(y,z)^{h-1}}\big)$.
Write
$
J = (y^{a},y^{a-1}z^{c_1}, y^{a-2}z^{c_2}, \ldots, yz^{c_{a-1}}, z^{c_a}),
$
so 
$
J :y = (y^{a-1},y^{a-2}z^{c_1}, y^{a-3}z^{c_2}, \ldots, z^{c_{a-1}}).
$
If $c_a \leq h$ then 
 $(y,z)^h \subseteq J$ by strong stability,
thus $(y,z)^{h-1} = (y,z)^{h}:y  \subseteq J:y$ and the claim follows.
If $c_a > h$ 
then the claim follows as
$$
\dim_\kk \frac{\kk[y,z]}{J} - \dim_\kk \frac{\kk[y,z]}{J:y}= 
\sum_{i=1}^a c_i - \sum_{i=1}^{a-1} c_i =
c_a > h =  \dim_\kk \frac{\kk[y,z]}{(y,z)^h} - \dim_\kk \frac{\kk[y,z]}{(y,z)^{h-1}}.
$$

Finally, assume equality holds  in \eqref{InequalityPartialSequence} for all $j$,
 then  $\dim_\kk \big(\kk[y,z]/{I_i}\big)=\dim_\kk \big({\kk[y,z]}/{(y,z)^{r-i}}\big)$ for all $i$.
We show by decreasing  induction on $i$ that $I_i = (y,z)^{r-i}$.
If $x^r \notin I$ then $I$ contains no monomial of degree $r$, by strong stability, yielding the contradiction $I \subseteq \mm^{r+1}$.
Thus $I_i = \kk[y,z]$ for all $i \geq r$.
Now suppose $I_i = (y,z)^{r-i}$ for some $0<i \leq r$.
Using the argument of the previous paragraph with $J= I_{i-1}$ and $h= r-i+1$, we 
must have $c_a \leq  h$,
otherwise $\dim_\kk \big(\kk[y,z]/{I_i}\big) \leq \dim_\kk \big(\kk[y,z]/({I_{i-1}}:y)\big) < \dim_\kk \big({\kk[y,z]}/{(y,z)^{r-i}}\big)$.
But if $c_a \leq  h$ then 
 $(y,z)^{r-i+1}=(y,z)^h \subseteq J= I_{i-1}$,
 and equality must hold by dimension reasons.
\end{proof}

\begin{lemma}\label{LemmaFatPointPPN}
Let $r \in \N$.
We have 
\begin{eqnarray*}
\dim_\kk T_\ppn(\mm^r)= \dim_\kk T_\pnp(\mm^r) = \dim_\kk T_\npp(\mm^r) &=& {r+3 \choose 4},\\
\dim_\kk T_\pnn(\mm^r) = \dim_\kk T_\npn(\mm^r) = \dim_\kk T_\nnp(\mm^r) &= &{r+2\choose 4}.
\end{eqnarray*}
In particular, $\dim_\kk T(\mm^r) = {r+2 \choose 2}{r+1 \choose 2}.$
\end{lemma}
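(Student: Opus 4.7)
The plan is to specialize Lemma \ref{LemmaHilbertFunctionFatPoint} to $n=3$ and sum its formula over the six regions of $\Z^3$ carved out by the signatures. Lemma \ref{LemmaHilbertFunctionFatPoint} tells us that $|T(\mathfrak{m}^r)|_\alpha = 0$ unless $\omega(\alpha) = -1$, and in that case
$$
\dim_\kk |T(\mathfrak{m}^r)|_\alpha = \binom{r+2-\omega(\alpha^-)}{2}
$$
provided $\omega(\alpha^-) \le r$ (and $0$ otherwise). So I only need to parametrize the relevant $\alpha$'s in each region and add up.

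By the obvious $S_3$-action on the variables $x,y,z$, the three signatures $\ppn,\pnp,\npp$ (one negative entry) yield equal dimensions, and similarly for $\pnn,\npn,\nnp$ (two negative entries). So the task reduces to evaluating two sums. For $\mathfrak{s}=\ppn$, write $\alpha_3 = -c$ with $c\ge 1$; then $\omega(\alpha^-)=c$ and the constraint $\omega(\alpha)=-1$ forces $\alpha_1+\alpha_2=c-1$ with $\alpha_1,\alpha_2\ge 0$, giving exactly $c$ choices. Hence
$$
\dim_\kk T_\ppn(\mathfrak{m}^r) = \sum_{c=1}^{r} c \binom{r+2-c}{2} = \sum_{c=1}^{r} \binom{c}{1}\binom{r+2-c}{2} = \binom{r+3}{4}
$$
by the Vandermonde--Chu identity. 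For $\mathfrak{s}=\pnn$, write $\alpha_2=-a,\alpha_3=-b$ with $a,b\ge 1$; then $\omega(\alpha^-)=a+b$ and $\omega(\alpha)=-1$ pins down $\alpha_1=a+b-1\ge 0$ uniquely. Summing over pairs with $a+b=s$ and using the same identity gives
$$
\dim_\kk T_\pnn(\mathfrak{m}^r) = \sum_{s=2}^{r} (s-1)\binom{r+2-s}{2} = \sum_{s=2}^{r} \binom{s-1}{1}\binom{r+2-s}{2} = \binom{r+2}{4}.
$$

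Finally, adding the six contributions by Proposition \ref{PropositionDecompositionDistinguishedSubspaces},
$$
\dim_\kk T(\mathfrak{m}^r) = 3\binom{r+3}{4} + 3\binom{r+2}{4},
$$
which simplifies to $\tfrac{r(r+1)^2(r+2)}{4} = \binom{r+2}{2}\binom{r+1}{2}$ after factoring out $\tfrac{(r+2)(r+1)r}{8}$ and collecting the bracket $(r+3)+(r-1)=2(r+1)$.

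There is no real obstacle here beyond careful bookkeeping: the whole argument is a direct application of Lemma \ref{LemmaHilbertFunctionFatPoint} combined with two instances of the Vandermonde--Chu identity and a short polynomial manipulation to match the final product formula.
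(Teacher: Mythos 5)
Your proposal is correct and follows essentially the same route as the paper: both apply Lemma \ref{LemmaHilbertFunctionFatPoint}, count the lattice points of weight $-1$ in each signature region, and evaluate the resulting sum (the paper via iterated hockey-stick, you via the equivalent Vandermonde--Chu identity). All the counts and the final simplification check out.
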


\begin{proof}
Using Lemma \ref{LemmaHilbertFunctionFatPoint} and the ``hockey-stick identity'' of binomial coefficients  one gets
\begin{align*}
\dim_\kk T_\ppn(\mm^r) & = \sum_{\substack{\alpha_1, \alpha_2 \geq 0, \alpha_3 \geq -r  \\ \alpha_1+\alpha_2 +\alpha_3 = -1}} {r+2 +\alpha_3 \choose 2} =
\sum_{\alpha_1 = 0}^{r-1} \sum_{\alpha_2 = 0}^{r-1-\alpha_1} {r+1-\alpha_1-\alpha_2 \choose 2} 
=
\sum_{\alpha_1 = 0}^{r-1} \sum_{h = 2}^{r+1-\alpha_1} {h \choose 2} \\
& =
\sum_{\alpha_1 = 0}^{r-1} {r+2-\alpha_1  \choose 3} 
=
\sum_{k = 3}^{r+2} {k \choose 3} 
=
 {r+3 \choose 4}. 
\end{align*}
The same holds for $T_\pnp(\mm^r),  T_\npp(\mm^r)$ by symmetry.
The other formula is proved likewise.
The last formula follows from Proposition \ref{PropositionDecompositionDistinguishedSubspaces}.
\end{proof}

We are now ready to state the main theorem of this section:
\begin{thm} \label{TheoremExtremalSubspaces} Let $\mathrm{char}\, \kk = 0$ and $[I] \in \Hilb^d \mathbf{A}^3$ be  Borel-fixed,
with $d = { r+2 \choose 3}$.  Then we have
\begin{eqnarray*}
\dim_\kk  T_{\ppn}(I) \leq \dim_\kk T_{\ppn}(\mm^r), & \, & \dim_\kk  T_{\pnp}(I) \leq \dim_\kk T_{\pnp}(\mm^r), \\
\dim_\kk  T_{\nnp}(I) \leq \dim_\kk T_{\nnp}(\mm^r), & \, & \dim_\kk  T_{\npn}(I) \leq \dim_\kk T_{\npn}(\mm^r). 
\end{eqnarray*}
Moreover, in each case equality occurs   if and only if $I = \mathfrak{m}^r$.
\end{thm}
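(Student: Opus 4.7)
The approach is to combine the upper bound of Proposition \ref{PropositionBoundPPN} with the structural features of Borel-fixed ideals (which equal strongly stable ideals in characteristic $0$, by Lemma \ref{LemmaBorelExchangeChar0}) captured in Lemmas \ref{LemmaSimplifyMax} and \ref{LemmaExtremalSequence}, and then to transfer the resulting bounds to the two remaining signatures via Theorem \ref{TheoremPairingSignatures}.

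First I would observe that $x^r \in I$: otherwise strong stability forces $I \subseteq \mm^{r+1}$, giving $\dim_\kk(S/I) \geq \binom{r+3}{3} > d$, a contradiction; in particular, the $\kk[y,z]$-decomposition $I = \bigoplus_i x^i I_i$ satisfies $I_i = \kk[y,z]$ for every $i \geq r$. Now for $T_{\ppn}(I)$, sum Proposition \ref{PropositionBoundPPN} over all $\alpha_1,\alpha_2 \geq 0$ and swap the order of summation so that each pair $(i,j)$ is counted with multiplicity $(i+1)(j+1)$. Lemma \ref{LemmaSimplifyMax} replaces $\max\{b_{i+1,j},b_{i,j+1}\}$ by $b_{i,j+1}$, and an Abel summation over $j$ telescopes
$$\sum_{j\geq 0}(j+1)(b_{i,j}-b_{i,j+1}) \;=\; \sum_{j\geq 0} b_{i,j} \;=\; \dim_\kk(\kk[y,z]/I_i),$$
yielding $\dim_\kk T_{\ppn}(I) \leq \sum_{i=0}^{r-1}(i+1)\dim_\kk(\kk[y,z]/I_i)$. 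A second Abel summation rewrites the right-hand side as $\sum_{j=0}^{r-1}\sum_{i=j}^{r-1}\dim_\kk(\kk[y,z]/I_i)$, and Lemma \ref{LemmaExtremalSequence} bounds each inner sum by the corresponding sum for $\mm^r$. A short telescoping identity, paralleling the one appearing in the proof of Lemma \ref{LemmaFatPointPPN}, shows the resulting total equals $\binom{r+3}{4} = \dim_\kk T_{\ppn}(\mm^r)$.

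The inequality $\dim_\kk T_{\pnp}(I) \leq \dim_\kk T_{\pnp}(\mm^r)$ follows by the identical argument after permuting the roles of $y$ and $z$ and using the $\kk[y]$-decomposition of $I$; the second half of Lemma \ref{LemmaSimplifyMax} again makes the telescoping collapse cleanly, and the same quantity $\dim_\kk(\kk[y,z]/I_i)$ reappears in the intermediate step. The bounds on $T_{\nnp}(I)$ and $T_{\npn}(I)$ then follow at once by subtracting $d$ from both sides of Theorem \ref{TheoremPairingSignatures}, applied to both $I$ and $\mm^r$. For the equality characterization, if $\dim_\kk T_{\ppn}(I) = \dim_\kk T_{\ppn}(\mm^r)$ then equality must hold at every step of the chain, in particular in Lemma \ref{LemmaExtremalSequence} for every $0 \leq j \leq r-1$, and the equality clause of that lemma forces $I = \mm^r$; the argument for $T_{\pnp}$ is the same, while for $T_{\nnp}$ and $T_{\npn}$ equality transfers back via Theorem \ref{TheoremPairingSignatures}. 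I expect the main obstacle to be the passage from the double sum of Proposition \ref{PropositionBoundPPN} to a single sum over the $\kk[y,z]$-decomposition via two nested Abel summations: strong stability, through Lemma \ref{LemmaSimplifyMax}, is the essential ingredient that makes this telescoping work without residual cross terms, and without it one would need a much more delicate combinatorial bookkeeping.
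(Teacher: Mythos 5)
Your proposal is correct and follows essentially the same route as the paper's proof: summing Proposition \ref{PropositionBoundPPN} over all $\alpha_1,\alpha_2\geq 0$, simplifying the max via Lemma \ref{LemmaSimplifyMax}, telescoping twice to reach $\sum_{i}(i+1)\dim_\kk(\kk[y,z]/I_i)$, bounding via Lemma \ref{LemmaExtremalSequence}, handling $\pnp$ with the $\kk[y]$-decomposition, transferring to $\nnp$ and $\npn$ by Theorem \ref{TheoremPairingSignatures}, and deducing the equality case from the rigidity clause of Lemma \ref{LemmaExtremalSequence}. No gaps.
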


\begin{proof} 
By Theorem \ref{TheoremPairingSignatures}
 it suffices to prove the first two inequalities.
We consider the $\kk[z]-$, $\kk[y]-$ and  $\kk[y,z]-$decompositions of $I$
$$
I = \bigoplus x^iy^j\Big(z^{b^{z}_{i,j}}\Big) = \bigoplus x^iz^j\Big(y^{b^{y}_{i,j}}\Big)
= \bigoplus x^iI_i.
$$
Note that $\sum_{j\geq 0} b^{z}_{i,j} = \dim_\kk(\kk[y,z]/I_i)$ for each $i$.
Recall that $I_i = \kk[y,z]$ for all $i \geq r$, as observed in the proof of Lemma \ref{LemmaExtremalSequence}. 
We apply  Proposition \ref{PropositionBoundPPN} and Lemmas \ref{LemmaSimplifyMax},  \ref{LemmaExtremalSequence},
\ref{LemmaFatPointPPN} to obtain
\begin{align*}
\dim_\kk  T_{\ppn}(I) 
&= 
\sum_{\substack{ \alpha_1 , \alpha_2 \geq 0 \\ \alpha_3 <0}} \dim_\kk  |T(I)|_{(\alpha_1, \alpha_2, \alpha_3)} 
\leq  \sum_{\substack{\alpha_1 , \alpha_2 \geq 0}} \sum_{\substack{i \geq \alpha_1\\  j \geq \alpha_2}} \big(b_{i,j}^z - \max\{b_{i+1,j}^z,b_{i,j+1}^z\}\big) 
\\
&= 
\sum_{\substack{\alpha_1 \geq 0 \\ \alpha_2 \geq 0}} \sum_{\substack{i \geq \alpha_1\\  j \geq \alpha_2}} \big(b_{i,j}^z - b_{i,j+1}^z\big) 
= 
\sum_{i,j} (i+1)(j+1)\big(b_{i,j}^z - b_{i,j+1}^z\big) 
= 
\sum_{i,j} (i+1)b_{i,j}^z \\
&= 
\sum_{i=0}^{r-1} (i+1)\dim_\kk\frac{\kk[y,z]}{I_i}
=  
\sum_{i=0}^{r-1} \sum_{j=i}^{r-1} \dim_\kk\frac{\kk[y,z]}{I_j}
 \leq 
 \sum_{i=0}^{r-1} \sum_{j=i}^{r-1} \dim_\kk \frac{\kk[y,z]}{(y,z)^{r-j}}
 \\
& =
 \sum_{i=0}^{r-1} \sum_{j=i}^{r-1} {r-j+1\choose 2}
 =
 \sum_{i=0}^{r-1} \sum_{h=2}^{r-i+1} {h\choose 2}
 =
 \sum_{i=0}^{r-1}  {r-i+2\choose 3}
 =
 \sum_{k=3}^{r+2}  {k\choose 3}
 = {r+3 \choose 4}
 \\
& =
 \dim_\kk  T_{\ppn}(\mm^r).
\end{align*}

The  inequality $\dim_\kk  T_{\pnp}(I) \leq \dim_\kk T_{\pnp}(\mm^r)$ 
is proved in the same way, using the second part of Lemma \ref{LemmaSimplifyMax} and the fact that  for each $i$ we have $\sum_{j\geq 0} b^{y}_{i,j} = \dim_\kk(\kk[y,z]/I_i)$.

Finally, 
we verify the last assertion of the theorem.
Observe that, if any of the four inequalities is an equality,
then all the inequalities in the application of Lemma \ref{LemmaExtremalSequence} are equalities, 
so for every $0 \leq i \leq r-1$ we have
 $\sum_{j=i}^{r-1} \dim_\kk (\kk[y,z]/I_j) =  \sum_{j=i}^{r-1} \dim_\kk \big(\kk[y,z]/(y,z)^{r-j}\big)$,
  and this in turn forces $I = \mm^r$ by the second part of Lemma \ref{LemmaExtremalSequence}.
\end{proof}

\begin{remark}
By  Lemma \ref{LemmaGinReduction}, Remark \ref{RemarkCharacteristicZero}, and Proposition \ref{PropositionDecompositionDistinguishedSubspaces},
Theorem \ref{TheoremExtremalSubspaces} 
verifies two thirds of Conjecture \ref{BIconj1}
 for $\Hilb^d\AA^3$.
In fact, 
we conjecture that the remaining two inequalities 
$$
\dim_\kk  T_{\npp}(I) \leq \dim_\kk T_{\npp}(\mm^r)
\qquad \text{and}
\qquad 
\dim_\kk  T_{\pnn}(I) \leq \dim_\kk T_{\pnn}(\mm^r)
$$
are also true.
However, 
the bounds obtained for these subspaces through Proposition \ref{PropositionBoundPPN} are not sharp enough to prove them, as the next example shows.
\end{remark}

\begin{example}
Let $I = (x) +(y,z)^s$ where $s\in \N$.
We consider its $\kk[x]-$decomposition
$
I = \bigoplus y^iz^j\big(x^{b_{i,j}}\big).
$
Observe that $b_{i,j}= 1 $ if $i+j < s$, whereas $b_{i,j}= 0 $ if $i+j \geq s$. 
Proceeding as in the proof of Theorem \ref{TheoremExtremalSubspaces}, we 
use Proposition \ref{PropositionBoundPPN} to estimate $\dim_\kk  T_{\npp}(I)$ and obtain
\begin{align*}
\dim_\kk  T_{\npp}(I) 
&=
\sum_{\alpha_2 , \alpha_3 \geq 0} \sum_{\alpha_1 <0 } \dim_\kk  |T(I)|_{(\alpha_1, \alpha_2, \alpha_3)} 
\leq  \sum_{\substack{\alpha_2 , \alpha_3 \geq 0}} \sum_{\substack{i \geq \alpha_2\\  j \geq \alpha_3}} \big(b_{i,j} - \max\{b_{i+1,j},b_{i,j+1}\}\big) 
\\
&= 
\sum_{\substack{\alpha_2 , \alpha_3 \geq 0}} \sum_{\substack{\,\,i \geq \alpha_2 ,  j \geq \alpha_3\\ i+j =s-1}} 1
= 
\sum_{\substack{\alpha_2 , \alpha_3 \geq 0 \\ \alpha_2+\alpha_3 < s}} (s-\alpha_2-\alpha_3)
= {s+1 \choose 2}s - \sum_{\substack{\alpha_2 , \alpha_3 \geq 0 \\ \alpha_2+\alpha_3 < s}} (\alpha_2+\alpha_3)
\\
&= {s+1 \choose 2}s - \sum_{i=1}^{s-1} i(i+1) = {s+1 \choose 2}s -\frac{(s-1)s(s+2)}{3} = {s+2 \choose 3}.
\end{align*}
Choose  $s= 15$ and $r = 8$, 
so   $\dim_\kk (S/I) = {15 + 1 \choose 2} = 120 = {8 +2 \choose 3} = \dim_\kk (S/\mm^r)$.
The inequality above yields $\dim_\kk  T_{\npp}(I) \leq {15+2 \choose 3} = 680$;
however,  $\dim_\kk  T_{\npp}(\mm^r) = {8+3 \choose 4} = 330$ by Lemma \ref{LemmaFatPointPPN}.
\end{example}

\section{Global estimates}\label{SectionGlobalEstimates}

We now  take a more direct approach to estimating the dimension of  tangent space to a point in $\Hilb^d \AA^3$.
This section is devoted to the proof of Theorem \ref{TheoremGlobalBound}. 

Let $R$ be a regular local ring of dimension 2, and denote by $\ell(\cdot)$ the length of an $R$-module.
A key step in the proof of the smoothness of  $\Hilb^d \AA^2$ \cite{Fogarty} is 
to show that $\ell(T(I)) = 2\ell(R/I)$ for all artinian ideals $I\subseteq R$. 
The next proposition generalizes this fact.

\begin{prop} \label{Propostion2RLR} 
Let $R$ be a 2-dimensional regular local ring, and 
let $I, J\subseteq R$ be ideals satisfying $\ell(R/I), \ell(R/J) < \infty$. Then
\begin{equation*}
\ell\big(\Hom_{R}(I,R/J)\big)=\ell\big(R/J\big)+\ell\big((I:J)/I\big). 
\end{equation*}
\end{prop}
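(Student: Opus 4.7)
The plan is to prove this through an Euler-characteristic computation combined with Matlis duality, exploiting the fact that $\mm$-primary ideals in a $2$-dimensional regular local ring have projective dimension exactly one.

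First I would note that since $R$ is a $2$-dimensional regular local ring and $\ell(R/I) < \infty$, the Auslander-Buchsbaum formula gives $\mathrm{pd}_R(R/I) = 2$ and hence $\mathrm{pd}_R(I) = 1$. Because $I$ has rank $1$, a minimal free resolution must take the form
\[
0 \longrightarrow R^{a-1} \longrightarrow R^a \longrightarrow I \longrightarrow 0
\]
for some $a \geq 1$. Applying $\Hom_R(-,R/J)$ and using the vanishing $\Ext^{i}_R(R^n,R/J) = 0$ for $i \geq 1$ produces the four-term exact sequence
\[
0 \longrightarrow \Hom_R(I,R/J) \longrightarrow (R/J)^{a} \longrightarrow (R/J)^{a-1} \longrightarrow \Ext^1_R(I,R/J) \longrightarrow 0
\]
of finite length modules, and taking the alternating sum of lengths yields $\ell(\Hom_R(I,R/J)) = \ell(R/J) + \ell(\Ext^1_R(I,R/J))$.

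Next I would compute the residual $\Ext^1$. From the short exact sequence $0 \to I \to R \to R/I \to 0$ and the vanishing of $\Ext^{i}_R(R,-)$ for $i \geq 1$, a dimension shift gives $\Ext^1_R(I,R/J) \cong \Ext^2_R(R/I,R/J)$. To handle this group I would invoke local duality: since $R$ is Gorenstein of dimension $2$ with $\omega_R \cong R$, for any pair of finite length modules $M, N$ one has $\Ext^i_R(M,N) \cong \Ext^{2-i}_R(N,M)^\vee$, where $-^\vee$ is Matlis duality, and in particular both sides have the same length. Taking $i = 2$, $M = R/I$, $N = R/J$ yields $\ell(\Ext^2_R(R/I,R/J)) = \ell(\Hom_R(R/J,R/I))$, and the standard identification $\Hom_R(R/J,R/I) \cong (I:J)/I$ via $\varphi \mapsto \varphi(1)$ completes the computation.

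The main technical input I anticipate is the Matlis duality step; this is the local analogue of Lemma \ref{LemmaSerreDuality} and is classical for Gorenstein local rings, so one can simply cite it. All other ingredients are routine length bookkeeping and standard dimension-shift arguments, so I do not foresee any significant obstacle.
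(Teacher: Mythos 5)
Your proposal is correct. The overall skeleton coincides with the paper's: an Euler-characteristic/alternating-sum argument reduces the claim to showing $\ell\big(\Ext^2_R(R/I,R/J)\big)=\ell\big((I:J)/I\big)$, and a duality then finishes. (Your four-term sequence from the length-one resolution of $I$, plus the dimension shift $\Ext^1_R(I,R/J)\cong\Ext^2_R(R/I,R/J)$, is length-for-length equivalent to the paper's computation $\chi(R/I,R/J)=0$ combined with the sequence $0\to\Hom(R/I,R/J)\to R/J\to\Hom(I,R/J)\to\Ext^1(R/I,R/J)\to 0$.) The one genuine difference is how the duality is executed. The paper first identifies $\Ext^2_R(R/I,R/J)\cong\omega_{R/I}\otimes R/J$ and then works inside the category of finite $R/I$-modules, using duality with respect to $\omega_{R/I}$ (after reducing to the case $I\subseteq J$) to show $\ell(\omega_{R/I}/J\omega_{R/I})=\ell\big(\Hom(R/J,R/I)\big)$. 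You instead invoke the symmetric duality $\Ext^i_R(M,N)\cong\Ext^{2-i}_R(N,M)^\vee$ for finite length modules over the Gorenstein ring $R$ itself, which is exactly the local analogue of Lemma \ref{LemmaSerreDuality}; this lets you pass directly from $\Ext^2_R(R/I,R/J)$ to $\Hom_R(R/J,R/I)\cong(I:J)/I$ and avoids both the reduction to $I\subseteq J$ and the bookkeeping with $\omega_{R/I}$. The statement you cite is indeed classical (e.g.\ via $\Ext^i_R(M,N)\cong\mathrm{Tor}_i^R(M,N^\vee)^\vee$ together with $\Ext^i_R(M,N)\cong\mathrm{Tor}_{2-i}^R(M^\vee,N)$, both valid since $M$ is perfect of grade $2$ and Matlis duality is exact on finite length modules without completeness hypotheses), so your route is a legitimate and somewhat more streamlined packaging of the same ideas.
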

	
\begin{proof} 
Let $0 \rightarrow R^{a_2} \rightarrow R^{a_1} \rightarrow R^{a_0} \rightarrow R/I  \rightarrow 0$ be a free resolution,
then the alternating sum of ranks vanishes: $a_0-a_1+a_2 = 0$. 
Setting $\chi(R/I,R/J) = \sum_{i=0}^{2}(-1)^i\ell\big(\Ext^i(R/I,R/J)\big)$ we have 
\begin{equation}\label{EquationChi}
\chi(R/I,R/J) = \sum_{i=0}^{2} (-1)^i\chi(R^{a_i},R/J) = \sum_{i=0}^{2} (-1)^i \ell(R/J)\cdot a_i = \ell(R/J)\sum_{i=0}^{2} (-1)^i a_i = 0.
\end{equation}
Let $\omega_{R/I}$ be the canonical module of $R/I$.
Since $R/I$ is a Cohen-Macaulay $R$-module of codimension 2, 
dualizing its free resolution and 
using right-exactness of $-\otimes R/J$ yields
\begin{equation}\label{EquationExt2}
\Ext^2(R/I,R/J)\cong \Ext^2(R/I,R)\otimes R/J = \omega_{R/I} \otimes R/J.
\end{equation}
Combining  equations \eqref{EquationChi} and \eqref{EquationExt2} 
with the  exact sequence 
$$0 \rightarrow \Hom(R/I,R/J) \rightarrow R/J \rightarrow  \Hom(I,R/J) \rightarrow \Ext^1(R/I,R/J) \rightarrow 0$$
we get
\begin{eqnarray*}
\ell\big(\Hom(I,R/J)\big) & = & \ell\big(R/J\big) - \ell\big(\Hom(R/I,R/J)\big) + \ell\big(\Ext^1(R/I,R/J)\big) \\
			& = & \ell\big(R/J\big) + \ell\big(\Ext^2(R/I,R/J)\big) \\
			& = & \ell\big(R/J\big)  + \ell\big(\omega_{R/I} \otimes_R R/J\big).
\end{eqnarray*}
It remains to show that $\ell\big(\omega_{R/I}/J\omega_{R/I}\big)= \ell\big( (I:J)/I\big)$. 
We have $(I:J)/I = \big(I:(I+J)\big)/I$ and  $\omega_{R/I}/J\omega_{R/I} =\omega_{R/I}/(I+J)\omega_{R/I}$ (since $I$ annihilates $\omega_{R/I}$), 
so we may assume that  $I \subseteq J$. 
In this case $R/J$ is a finite  $R/I$-module and $\omega_{R/J} \cong \Hom(R/J, \omega_{R/I})$.
Since $\Hom(-, \omega_{R/I})$ induces a duality in the category of finite  $R/I$-modules (cf. \cite[21.1]{Eisenbud}) we obtain
\begin{eqnarray*}
\Hom(R/J,R/I) & \cong & \Hom\big(\Hom(R/I,\omega_{R/I}),\Hom(R/J,\omega_{R/I})\big) \\
			& \cong & \Hom(\omega_{R/I},\omega_{R/J}) \\
			& = & \Hom(\omega_{R/I}/J\omega_{R/I},\omega_{R/J})
\end{eqnarray*}
and this implies
$\ell\big(\Hom(R/J,R/I)\big)= \ell\big(\omega_{R/I}/J\omega_{R/I}\big)$, again by duality.
The  proof is completed, as $(I:J)/I = \Hom(R/J,R/I)$.
\end{proof}

Now we present the  main result of this section, 
which 
establishes an approximation of Conjecture \ref{BIconj1} for the Hilbert scheme of points in $\AA^3$.

\begin{thm} \label{TheoremGlobalBound} 
Let $d,r \in \N$ be such that
$ d \leq \binom{r+2}{3}$.
For all $[I] \in \Hilb^d \AA^3$ we have
\begin{equation*}
 \dim_\kk  T(I) \leq \frac{4}{3}\dim_\kk  T(\mm^r).
\end{equation*}
\end{thm}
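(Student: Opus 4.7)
The plan is to establish the bound $\dim_\kk T(I) \leq 2(r+1)d$, which via the identity
\begin{equation*}
2(r+1)\binom{r+2}{3} \;=\; \tfrac{4}{3}\binom{r+2}{2}\binom{r+1}{2} \;=\; \tfrac{4}{3}\dim_\kk T(\mm^r)
\end{equation*}
(cf.\ Lemma~\ref{LemmaFatPointPPN}) is equivalent to the desired inequality. By Lemma~\ref{LemmaGinReduction} and Remark~\ref{RemarkCharacteristicZero}, we may assume $\mathrm{char}\,\kk = 0$ and that $I$ is Borel-fixed. Consider the $\kk[y,z]$-decomposition $I = \bigoplus_{i \geq 0} x^i I_i$ with $R := \kk[y,z]$ and $n_i := \dim_\kk(R/I_i)$, so $\sum_i n_i = d$ and $I_i \subseteq I_{i+1}$. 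Strong stability combined with $d \leq \binom{r+2}{3}$ forces $x^r \in I$ (otherwise $I \subseteq \mm^{r+1}$, contradicting the colength bound), hence $I_i = R$ for all $i \geq r$.

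Next, decompose $T(I) = \bigoplus_{k \in \Z} T^k$ by $x$-shift. An $S$-linear map $\varphi \in T^k$ corresponds to a compatible sequence $(\varphi_i)$ of $R$-linear maps $\varphi_i : I_i \to R/I_{i+k}$ satisfying $\varphi_{i+1}|_{I_i} = \pi_{i+k} \circ \varphi_i$, where $\pi_{i+k} : R/I_{i+k} \twoheadrightarrow R/I_{i+k+1}$ is the natural surjection induced by $I_{i+k} \subseteq I_{i+k+1}$. The tautological injection $T^k \hookrightarrow \bigoplus_i \Hom_R(I_i, R/I_{i+k})$ together with Proposition~\ref{Propostion2RLR}, applied in the $2$-dimensional regular local ring obtained by localizing $R$ at the origin, gives
\begin{equation*}
\dim_\kk \Hom_R(I_i, R/I_{i+k}) \;=\; n_{i+k} + \dim_\kk\big((I_i:I_{i+k})/I_i\big) \;\leq\; n_i + n_{i+k},
\end{equation*}
where the last inequality uses $(I_i:I_{i+k})/I_i \subseteq R/I_i$ and is an equality for $k \leq 0$ (since then $I_i \supseteq I_{i+k}$ forces $I_i : I_{i+k} = R$). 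The key combinatorial step then uses this compatibility together with the pairing of Proposition~\ref{PropositionDualityBij}, which matches the graded pieces of $T^k$ with those of $T^{-k}$ up to a shift by $d$, in order to collapse the redundant contributions and produce the bound $\dim_\kk T(I) \leq 2(r+1)d$. The asymptotic estimate $\dim \Hilb^d \P^3 \leq 3.64 \cdot d^{4/3}$ then follows from choosing $r$ minimal with $d \leq \binom{r+2}{3}$, so that $r \sim (6d)^{1/3}$, and $\dim_\kk T(\mm^r) \sim r^4/4$.

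The main obstacle is implementing the compatibility collapse. A naive summation of the 2D Hom bounds, without accounting for the $x$-compatibility of the tangent maps, yields only $\dim_\kk T(I) \leq 4rd + O(d)$, weaker by a factor of $2$ than the desired $2(r+1)d$. Extracting the sharp bound requires the duality of Proposition~\ref{PropositionDualityBij} to identify precisely which graded pieces of the product $\bigoplus_i \Hom_R(I_i, R/I_{i+k})$ actually lie in the image of $T^k$, so that the overcounting is eliminated at the level of individual $(y,z)$-gradings rather than in aggregate.
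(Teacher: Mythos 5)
Your setup is the same as the paper's (reduction to a Borel-fixed ideal, the $\kk[y,z]$-decomposition $I=\bigoplus x^iI_i$, the $x$-graded decomposition of $T(I)$, and Proposition \ref{Propostion2RLR} applied to the two-dimensional pieces), and your arithmetic identity $2(r+1)\binom{r+2}{3}=\frac43\dim_\kk T(\mm^r)$ is correct. But the decisive step is missing: you declare that the "naive summation" only yields $\approx 4rd$ and that closing the factor of $2$ requires an unspecified "compatibility collapse" via the duality of Proposition \ref{PropositionDualityBij}. You never implement that collapse, so as written the argument does not prove the theorem. Moreover, the proposed mechanism is not needed at all; the gap is created by an avoidable overestimate, not by a genuine obstruction.

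The point you miss is the precise index range over which the restrictions determine a tangent vector, together with an exact evaluation of the resulting double sum. Set $p=\min\{i: I_i=\kk[y,z]\}$; strong stability and $d\le\binom{r+2}{3}$ give $p\le r-1$ (for $I\ne\mm^r$). A tangent vector of $x$-degree $j$ is nonzero only for $-p\le j\le p-1$, and is determined by its restrictions $I_i\to \kk[y,z]/I_{i+j}$ for $\max(0,-j)\le i\le\min(p,p-j-1)$: the lower bound because the target must lie in $\N^3$, the upper bound because $\kk[y,z]/I_{i+j}=0$ for $i+j\ge p$ and every minimal generator of $I$ has $x$-degree at most $p$. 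Summing the bound $\dim_\kk\Hom(I_i,\kk[y,z]/I_{i+j})\le n_i+n_{i+j}$ over this triangular range and exchanging the order of summation gives exactly $(p+1)\sum_{l=0}^{p-1}n_l+p\sum_{l=0}^{p}n_l=(2p+1)d\le(2r-1)d<2(r+1)d$; your $4rd$ arises only if one bounds each inner sum crudely by $2d$ instead of computing the double sum. No appeal to Proposition \ref{PropositionDualityBij} or to the compatibility relations $\varphi_{i+1}|_{I_i}=\pi_{i+k}\circ\varphi_i$ is required beyond the determination statement above. To complete your proof you should replace the speculative final paragraph with this explicit bookkeeping.
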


\begin{proof}
By Remark \ref{RemarkCharacteristicZero} and Lemma \ref{LemmaGinReduction} we may assume that $\mathrm{char} \, \kk = 0$ and $I\subseteq S$ is Borel-fixed. 
Let $I= \bigoplus x^i I_i$ be the $\kk[y,z]-$decomposition and let $p = \min\big\{ i \, : \, I_i = \kk[y,z]\big\}$.
Assuming without loss of generality that $I \ne \mm^r$, 
the hypothesis $ d \leq \binom{r+2}{3}$ and the fact that $I$ is strongly stable  imply that $p < r$.

We denote by $T(I)_j$ the component of $T(I)$ of $x$-degree $j$, that is, 
$T(I)_j = \bigoplus_{\alpha_1 = j} \big| T(I)\big|_\alpha.$
A tangent vector $\xi\in T(I)_j$, 
viewed as homomorphism $\xi: I \rightarrow S/I$,
 is uniquely determined by its restrictions 
\begin{equation*}
\xi \big\vert_{x^i I_i}\, :\, x^i I_i\longrightarrow x^{i+j}\frac{\kk[y,z]}{I_{i+j}} 
\end{equation*}
where $i \geq 0$ and $0 \leq i+j < p$.
Clearly, $T(I)_j=0$ if $j \geq p$. 
On the other hand, we also have $T(I)_j=0$ if $j < -p$,
since any monomial minimal generator of $I$ has $x$-degree at most $p$ by strong stability.
For the same reason, 
it suffices to consider the restrictions  for $i \leq p$.
To summarize, every $x$-homogeneous $\xi \in T(I)$ is determined by the induced $\kk[y,z]$-linear homomorphisms
\begin{equation}\label{EqRestrictions}
\xi \big\vert_{ I_i}\, :\, I_i\longrightarrow \frac{\kk[y,z]}{I_{i+j}} 
\qquad 
\text{with}
\quad 
-p \leq j \leq p-1, 
\quad
\max(0, -j) \leq i \leq \min(p,p-j-1)
\end{equation}
where, by abuse of notation, we drop the placeholders $x^i, x^{i+j}$.

Now we can estimate the dimension of the tangent space:

\begin{align*}
\dim_\kk T(I) 
&\leq 
\sum_{j=-p}^{p-1} \sum_{i = \max(0,-j)}^{\min(p,p-j-1)} \dim_\kk\Hom\left(I_i, \frac{\kk[y,z]}{I_{i+j}}\right)
&& \text{by \eqref{EqRestrictions} } \\
&=
\sum_{j=-p}^{p-1} \sum_{i = \max(0,-j)}^{\min(p,p-j-1)} \left( \dim_\kk\frac{\kk[y,z]}{I_{i+j}}+ \dim_\kk \frac{I_i:I_{i+j}}{I_{i}}\right)
&& \text{by Proposition \ref{Propostion2RLR}}\\
&\leq
\sum_{j=-p}^{p-1} \sum_{i = \max(0,-j)}^{\min(p,p-j-1)} \left( \dim_\kk\frac{\kk[y,z]}{I_{i+j}}+ \dim_\kk \frac{\kk[y,z]}{I_{i}}\right)
&& \text{}\\
&=
\sum_{j=-p}^{-1} \sum_{i = -j}^{p} \left( \dim_\kk\frac{\kk[y,z]}{I_{i+j}}+ \dim_\kk \frac{\kk[y,z]}{I_{i}}\right)
&& \text{}\\
& +
\sum_{j=0}^{p-1} \sum_{i =0}^{p-j-1} \left( \dim_\kk\frac{\kk[y,z]}{I_{i+j}}+ \dim_\kk \frac{\kk[y,z]}{I_{i}}\right)
&& \text{}\\
&=
\sum_{i=0}^{p-1} \sum_{j = 0}^{i}  \dim_\kk\frac{\kk[y,z]}{I_{j}}+ \sum_{j=0}^{p-1} \sum_{i = p-j}^{p}\dim_\kk \frac{\kk[y,z]}{I_{i}}
&& \text{reindexing}\\
& +
\sum_{i=0}^{p-1} \sum_{j =i}^{p-1}  \dim_\kk\frac{\kk[y,z]}{I_{j}}+ \sum_{j=0}^{p-1} \sum_{i =0}^{p-j-1}\dim_\kk \frac{\kk[y,z]}{I_{i}}
&& \text{}\\
&=
(p+1) \sum_{j = 0}^{p-1}  \dim_\kk\frac{\kk[y,z]}{I_{j}}+p\sum_{i =0}^{p}\dim_\kk \frac{\kk[y,z]}{I_{i}}\\
&=
(2p+1)\dim_\kk \frac{S}{I} \leq (2r-1) {r+2 \choose 3}&&\text{by assumption} \\
& \leq \frac{4}{3}{r+2\choose 2}{r+1 \choose 2} = \frac{4}{3}\dim_\kk T(\mm^r) && \text{by Lemma \ref{LemmaFatPointPPN}.}
\end{align*}
\end{proof}

Our analysis allows  verifying Conjecture \ref{BIconj1}
 for many  monomial ideals:

\begin{cor} \label{Corollary34}
Let  $[I]\in \Hilb^d\AA^3$ be a monomial point with $d\leq {r+2 \choose 3}$.
If $x^p \in I$ with $p \leq \frac{3r+1}{4}$, then 
$\dim_\kk T(I) \leq \dim_\kk T(\mm^r)$.
\end{cor}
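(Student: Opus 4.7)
The plan is to extract and reuse the key chain of inequalities already developed inside the proof of Theorem \ref{TheoremGlobalBound}, rather than to start from scratch. That proof does not actually need the full hypothesis $d\leq\binom{r+2}{3}$ until the very last line; before that, it establishes the universal estimate
\[
\dim_\kk T(I) \;\leq\; (2p_{0}+1)\,\dim_\kk(S/I),
\]
where $p_{0}=\min\{i : I_i = \kk[y,z]\}$ is determined by the $\kk[y,z]$-decomposition $I=\bigoplus x^iI_i$. By strong stability, $I_{p_0}=\kk[y,z]$ is equivalent to $x^{p_0}\in I$, so if the hypothesis of the corollary gives us $x^p\in I$ then $p_0\le p$, and the above bound implies $\dim_\kk T(I)\le (2p+1)d$.

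Next I would simply plug in the two numerical inputs. Using the hypothesis $d\le\binom{r+2}{3}$ and Lemma \ref{LemmaFatPointPPN}, the inequality $\dim_\kk T(I)\le\dim_\kk T(\mm^r)$ reduces to the purely arithmetic statement
\[
(2p+1)\binom{r+2}{3} \;\leq\; \binom{r+2}{2}\binom{r+1}{2}.
\]
A direct simplification turns the right-hand ratio $\binom{r+2}{2}\binom{r+1}{2}/\binom{r+2}{3}$ into $\tfrac{3(r+1)}{2}$, so the inequality becomes $2(2p+1)\le 3(r+1)$, i.e. $p\le \tfrac{3r+1}{4}$, which is precisely the assumption.

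There is essentially no obstacle here: all the hard combinatorial work was absorbed into the proof of Theorem \ref{TheoremGlobalBound}. The only small point worth being careful about is the distinction between the $p$ appearing in the hypothesis (\emph{some} $p$ with $x^p\in I$) and the $p_0$ used in the proof of Theorem \ref{TheoremGlobalBound} (the \emph{minimal} such exponent). Since the bound $(2p_0+1)d$ is monotone in $p_0$, replacing $p_0$ by the possibly larger $p$ only weakens the inequality in our favor, so the hypothesis $p\le\tfrac{3r+1}{4}$ is enough to close the argument.
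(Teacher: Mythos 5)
Your proposal is correct and is essentially the paper's own argument: the proof of Corollary \ref{Corollary34} likewise reuses the intermediate bound $\dim_\kk T(I)\leq (2p+1)\dim_\kk(S/I)$ from the proof of Theorem \ref{TheoremGlobalBound} and closes with the same arithmetic identity $\frac{3(r+1)}{2}\binom{r+2}{3}=\binom{r+2}{2}\binom{r+1}{2}$. Your remark distinguishing the minimal exponent $p_0$ from the hypothesized $p$ is a sound (and welcome) precision that the paper leaves implicit.
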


\begin{proof}
As in the proof of Theorem \ref{TheoremGlobalBound}, we may assume that $\mathrm{char} \, \kk = 0$ and $I\subseteq S$ is Borel-fixed: 
in fact, if $I$ is any monomial ideal and $x^p \in I $, then $x^p \in \gin\, I $ as well.
Now, 
if $p \leq \frac{3r+1}{4}$ then we can improve the estimates in the proof of Theorem \ref{TheoremGlobalBound}
obtaining 
$$
\dim_\kk T(I) \leq (2p+1) \dim_\kk\frac{S}{I} \leq \frac{6r +6}{4} {r+2\choose 3} ={r+2\choose 2}{r+1 \choose 2} =\dim_\kk T(\mm^r).
$$
\end{proof}

As observed in the proof of Theorem \ref{TheoremGlobalBound}, 
if $I$ is strongly stable and $ d = \dim_\kk (S/I)\leq {r+2 \choose 3}$ then 
$x^r \in I$.
Hence, Corollary \ref{Corollary34}
proves  Conjecture \ref{BIconj1} for
``three quarters'' of the strongly stable ideals -- in fact, often for a much larger fraction.
For example, we use this fact   in the proof of Proposition \ref{PropositionCounterexampleSturmfels},
where the search for the maximum tangent space dimension for $\Hilb^{39}\AA^3$ is reduced from all 39098 strongly stable ideals to the 2654 ones that do not  contain small powers of $x$.

Another consequence of Theorem \ref{TheoremGlobalBound} is a new  bound on the dimension of the Hilbert scheme:

\begin{cor}\label{CorollaryDimensionHS}
For $d \gg 0$ we have $\dim \Hilb^d \AA^3 \leq 3.64 \cdot  d^{\frac{4}{3}}$.
\end{cor}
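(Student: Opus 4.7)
My plan is to extract the corollary directly from Theorem \ref{TheoremGlobalBound} together with Lemma \ref{LemmaFatPointPPN}, via the standard bound of variety dimension by tangent space dimension, followed by an asymptotic expansion.

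First, for any irreducible component $X \subseteq \Hilb^d\P^3$ and any $[I] \in X$ one has $\dim X \leq \dim_{[I]} X \leq \dim_\kk T_{[I]}(\Hilb^d\P^3) = \dim_\kk T(I)$. Thus $\dim \Hilb^d\P^3 \leq \sup_{[I]} \dim_\kk T(I)$, where the supremum is taken over closed points. Since every length-$d$ subscheme of $\P^3$ is supported on finitely many closed points, we may choose a hyperplane $H \subseteq \P^3$ avoiding its support, which places the whole subscheme inside an affine chart $\P^3 \setminus H \cong \AA^3$. The tangent space computation is intrinsic, so this supremum equals $\sup_{[I] \in \Hilb^d\AA^3} \dim_\kk T(I)$, and this is what Theorem \ref{TheoremGlobalBound} controls.

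Next, given $d$, choose $r=r(d)$ to be the least integer with $d \leq \binom{r+2}{3}$. By Theorem \ref{TheoremGlobalBound} and Lemma \ref{LemmaFatPointPPN},
\begin{equation*}
\dim \Hilb^d\P^3 \leq \frac{4}{3}\dim_\kk T(\mm^r) = \frac{4}{3}\binom{r+2}{2}\binom{r+1}{2}.
\end{equation*}
Minimality of $r$ gives $d > \binom{r+1}{3}$, so for $d \to \infty$ we have $r \sim (6d)^{1/3}$; more precisely $r = (6d)^{1/3}(1+o(1))$. Expanding the product of binomials as $\frac{r^4}{4}(1 + O(1/r))$ yields
\begin{equation*}
\frac{4}{3}\binom{r+2}{2}\binom{r+1}{2} = \frac{r^4}{3}\bigl(1 + O(1/r)\bigr) = \frac{6^{4/3}}{3}\, d^{4/3}\bigl(1+o(1)\bigr).
\end{equation*}

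The only quantitative check is that $\frac{6^{4/3}}{3} < 3.64$. Indeed $6^{4/3} = 6\cdot 6^{1/3}$ and $6^{1/3} < 1.8172$, so $\frac{6^{4/3}}{3} < 3.6344 < 3.64$, with a margin of about $0.006$ that comfortably absorbs the lower-order terms for $d \gg 0$. There is no real obstacle here; the only thing to be careful about is the translation step from $\P^3$ to $\AA^3$ (handled by choosing a hyperplane missing the support) and ensuring the asymptotic constant is strictly below $3.64$ with enough slack for the $O(d)$ correction hidden in Theorem \ref{TheoremGlobalBound}'s displayed estimate.
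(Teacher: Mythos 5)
Your argument is correct and follows essentially the same route as the paper: bound $\dim \Hilb^d\AA^3$ by the maximal tangent space dimension, pick $r$ with $\binom{r+1}{3} < d \leq \binom{r+2}{3}$ so that $r \sim (6d)^{1/3}$, apply Theorem \ref{TheoremGlobalBound} with Lemma \ref{LemmaFatPointPPN}, and expand $\frac{1}{3}(r+2)(r+1)^2 r$ to get the constant $6^{4/3}/3 \approx 3.634 < 3.64$. The only difference is your explicit (and harmless) discussion of the $\P^3$-to-$\AA^3$ reduction, which the paper leaves implicit.
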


\begin{proof}
Let $r \in \N$ such that $ {r +1 \choose 3} < d \leq {r+2 \choose 3}$, so  $r-1 \leq  \sqrt[3]{6d}$.
Using Theorem \ref{TheoremGlobalBound} we get
\begin{align*}
\dim \Hilb^d \AA^3 
&\leq 
\max_{I\in \Hilb^d \AA^3 } \dim_\kk T(I)
\leq \frac{4}{3}\dim_\kk T(\mm^r) = \frac{4}{3} {r+2\choose 2}{r+1 \choose 2}
 \\
&=\frac{1}{3} (r+2)(r+1)^2(r) \leq \frac{1}{3} \left(\sqrt[3]{6d}\right)^{4} + O(d) \approx  3.634\cdot  d^{\frac{4}{3}} + O(d)
\end{align*}
implying the desired asymptotic bound.
\end{proof}

\begin{remark} 
The authors in \cite{BrianconIarrobino} proved that 
$\dim \Hilb^d \AA^3 \leq 19.92\cdot d^{\frac{4}{3}}$. 
On the other hand, 
the full Conjecture \ref{BIconj1} would imply that $\dim \Hilb^d \AA^3 \leq 2.73\cdot d^{\frac{4}{3}}$ for $d \gg 0$.
\end{remark}

\section{Counterexamples to the second Brian\c{c}on-Iarrobino conjecture}\label{SectionCounterexamples}

For each $d \in \N$ let $E(d) \subseteq S$ be the unique  ideal such that $[E(d)] \in \Hilb^d\AA^3$, 
$E(d)$ is a lexsegment ideal (cf. \cite[2.4]{MillerSturmfels} or \cite[4.2]{BrunsHerzog}), and $E(d)$ is generated in at most two consecutive degrees.
In other words, $ E(d) = K +\mm^{r+1}$ for an ideal $K$ generated by a lexicographic initial segment of monomials of degree $r$, for some $r$.
It is clear that $ E(d) = \mm^r $ when $d = {r+2 \choose 3}$.
More generally, the ideal $E(d)$ behaves in many respects in the same way as the powers of $\mm$, 
attaining for every $d$ extremal number of generators, syzygies, socle monomials, and more,
see \cite{Valla,CavigliaSammartano}.
Conjecture \ref{BIconj2} stated that $E(d)$ also attains extremal tangent space dimension, for every $d$.
Sturmfels \cite{Sturmfels} disproved it by exhibiting counterexamples for $d = 8, 16$.
In this section we prove that these counterexamples are not sporadic, i.e. that Conjecture \ref{BIconj2} fails infinitely often.
Furthermore, 
we show that a weaker form of Conjecture \ref{BIconj2} proposed by Sturmfels
also has a negative answer.

We recall that the socle of $T(I)$ is easy to compute, cf. Section \ref{SectionTangentSpacePreliminaries}.
The main goal of the next results is to understand  non-socle tangent vectors.

\begin{prop}\label{PropositionTangentSpaceLd}
Let $r\in \N$ with $r\geq 3$ and 
 $d = {r+2\choose 3} + r+3$. 
Then we have
$$
\dim_\kk T(E(d)) = \left( {r+2 \choose 2} +1 \right) \left( {r+1 \choose 2} +1 \right) +7.
$$ 
\end{prop}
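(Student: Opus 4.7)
The plan is to split $T(E(d))$ into its socle and non-socle parts and compute each separately, using that $E(d)$ is very close to the fat point $\mm^r$.

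First I would describe $E(d)$ explicitly. Since $d=\binom{r+2}{3}+r+3$, we have $E(d)=K+\mm^{r+1}$ where $K$ is generated by the $\binom{r+2}{2}-(r+3)$ lex-largest degree-$r$ monomials, so the $r+3$ degree-$r$ monomials of $S/E(d)$ are precisely the lex-smallest, forming the set
\[
\{y^a z^{r-a}:0\leq a\leq r\}\cup\{xz^{r-1},\,xyz^{r-2}\}.
\]
A direct inspection then shows that the minimal monomial generators of $E(d)$ consist of the degree-$r$ generators of $K$ together with the $r+4$ degree-$(r+1)$ monomials $\{y^a z^{r+1-a}:0\leq a\leq r+1\}\cup\{xz^r,\,xyz^{r-1}\}$, for a total of $\dim_\kk(E(d)/\mm E(d))=\binom{r+2}{2}+1$ minimal generators.

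Next I would compute $\soc(S/E(d))$. Since $E(d)\subseteq\mm^r$, the socle lies in degrees $r-1$ and $r$. Every monomial of degree $r$ in $S/E(d)$ is automatically in the socle, contributing $r+3$ elements. A case analysis on $m=x^i y^j z^k$ of degree $r-1$ shows that $m\in\soc(S/E(d))$ exactly when $i\geq 2$, or when $i=1$ and $j\geq 2$, yielding $\binom{r-1}{2}+\max(r-3,0)$ additional socle monomials. Summing gives $\dim_\kk\soc(S/E(d))=\binom{r+1}{2}+1$, and by the isomorphism \eqref{EquationSocleTangentSpace},
\[
\dim_\kk\soc\bigl(T(E(d))\bigr)=\left(\binom{r+2}{2}+1\right)\left(\binom{r+1}{2}+1\right).
\]

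It then remains to verify that the non-socle part of $T(E(d))$ has dimension exactly $7$. By Proposition~\ref{PropositionBasis} and Remark~\ref{RemarkBasisSocle}, this equals the number of bounded connected components $U$ of $\bigl(\widetilde{E(d)}+\alpha\bigr)\setminus\widetilde{E(d)}$ with $\Card(U)\geq 2$, summed over $\alpha\in\Z^3$. For the fat point $\mm^r$, every bounded component is a singleton by Lemma~\ref{LemmaHilbertFunctionFatPoint}, so any multi-element component arising for $E(d)$ must do so precisely because the two exceptional monomials $xz^{r-1}$ and $xyz^{r-2}$ in $S/E(d)$ disrupt the fat-point staircase along the boundary of $\widetilde{E(d)}$. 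My plan is to show that this forces $\alpha$ to lie in a finite explicit list of shifts independent of $r$, and then verify by direct inspection that exactly seven such multi-element bounded components appear, each of cardinality $2$ and all localized near the two exceptional monomials.

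The hard part will be this last combinatorial enumeration: one must carefully analyze the local staircase of $\widetilde{E(d)}$ in a neighborhood of $xz^{r-1}$ and $xyz^{r-2}$ and confirm that the count is independent of $r$ for $r\geq 3$. Theorem~\ref{TheoremPairingSignatures} provides a useful consistency check by forcing the parity and pairing relations among the six signature-graded pieces, and the base case $r=3$, corresponding to Sturmfels' original counterexample $d=16$, can be verified by listing the seven extra tangent vectors by hand, thereby anchoring the inductive pattern.
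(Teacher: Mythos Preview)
Your overall architecture---split $T(E(d))$ into its socle and a non-socle remainder, compute the socle via \eqref{EquationSocleTangentSpace}, then enumerate the multi-element bounded components---is exactly the paper's approach, and your computations of the minimal generators, of $\soc(S/E(d))$, and hence of $\dim_\kk\soc(T(E(d)))$ are correct.

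The gap is in the non-socle enumeration. Two of your stated expectations are false and would derail the argument if you relied on them. First, the seven non-socle components are \emph{not} all of cardinality~$2$: three of them have cardinality~$4$, and one of them (for the shift $\alpha=(-1,0,0)$) is a long zig-zag in the plane $\{x=0\}$ whose cardinality grows linearly with $r$. Second, that long component is not ``localized near the two exceptional monomials'' $xz^{r-1},\,xyz^{r-2}$ at all; it arises because the entire layer $\{y^a z^{r-a}:0\le a\le r\}$ sits in $S/E(d)$, which makes the degree-$(r-1)$ monomials $y^a z^{r-1-a}$ with $a\ge 2$ non-socle and glues them into a single component via the degree-$r$ monomials between them. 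So an analysis confined to a neighborhood of the two ``sporadic'' monomials will miss this component entirely and undercount.

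What you are missing is the organizing lemma the paper proves first: for any non-socle basis map $\varphi_U\in|T(E)|_\alpha$, one has $\deg\varphi(\bfx^\beta)\ge r-1$ for every $\bfx^\beta\in E$, forcing $\varphi(E)\subseteq (N_{r-1})$ where $N_{r-1}$ is the explicit span of the $r+2$ non-socle degree-$(r-1)$ monomials. This immediately pins down $\omega(\alpha)\in\{-1,-2\}$ and reduces the problem to tracking, via the annihilators in \eqref{EquationAnnihilator} (this is equation block in the paper giving $\ann_{S/E}(xyz^{r-3})=(x,y,z^2)$ etc.), which minimal generator of $E$ can land on which element of $N_{r-1}$. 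With that in hand the seven maps fall out by a short case analysis (two with $\omega(\alpha)=-2$ and five with $\omega(\alpha)=-1$), uniformly in $r$. Your intuition that the relevant shifts $\alpha$ form an $r$-independent finite list is correct, but it is this degree bound---not localization near $xz^{r-1}$ and $xyz^{r-2}$---that makes the list finite and the enumeration clean.
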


\begin{proof}
Denote for simplicity $E = E(d)$.
First of all, we observe that  $E = E_r + E_{r+1}$ where
$$
E_r = x^2 (x,y,z)^{r-2} + xy^2 ( y,z)^{r-3} 
\qquad
\text{and}
\qquad
E_{r+1} = (xyz^{r-1}, xz^r) + (y,z)^{r+1}
$$
and it follows that the number of generators of $E$ is 
$$
\dim_\kk \left( \frac{E}{\mm E} \right) = \dim_\kk \left( \frac{E_r}{\mm E_r} \right)+\dim_\kk \left( \frac{E_{r+1}}{\mm E_{r+1}} \right) = \left({r\choose 2} + r-2\right) + \left(2 + r + 2\right) =   {r+2 \choose 2} +1.
$$
On the other hand, we have $\soc(S/E(d)) = Z_{r-1} \oplus Z_{r}$ where
\begin{align*}
Z_{r-1} &= x^2 \cdot  \Span_\kk\left(x^{r-3},x^{r-4}y, \ldots, z^{r-3} \right) \oplus xy^2 \cdot \Span_\kk \left(y^{r-4},y^{r-5}z, \ldots, z^{r-4} \right)\\
Z_r & =  \Span_\kk \left(xyz^{r-2}, xz^{r-1},y^{r}, y^{r-1}z, \ldots, z^{r} \right)
\end{align*}
so that
$$
\dim_\kk \soc\left(\frac{S}{I}\right) = \dim_\kk (Z_{r-1})+\dim_\kk (Z_{r}) = \left({r-1\choose 2} + r-3\right) + \left(2 + r + 1\right) =   {r+1 \choose 2} +1.
$$

Using  \eqref{EquationSocleTangentSpace} we conclude that $\dim_\kk \soc(T(E)) = \big( {r+2 \choose 2} +1 \big) \big( {r+1 \choose 2} +1 \big) $.
In order to prove the desired formula for $\dim_\kk T(E)$,
it suffices to show that there are exactly 7 non-socle maps in the $\kk$-basis of $T(E)$ described in Proposition \ref{PropositionBasis}.

Fix some $\alpha\in \Z^3$ and 
a connected component $\emptyset \ne U$   of $  (\tilde{E}+{\alpha}) \setminus \, \tilde{E}$.
The associated  $S$-linear map $\varphi = \varphi_U : E \rightarrow S/E$  is
defined by $\varphi(\bfx^\beta) = \bfx^{\alpha+\beta}$ if $\alpha+\beta\in U$, $0$ otherwise,
thus $U = \widetilde{\varphi(E)} \subseteq \N^3$.
Assume that  $ \Card(U)\geq 2 $, equivalently, by Remark \ref{RemarkBasisSocle}, that $\varphi$ is not a socle map.
We will show that there are 7 possibilities for $\varphi$, in $3$ steps, and this will conclude the proof.

\underline{Step 1:} 
We claim that $\deg( \varphi( \bfx^\beta)) \geq r-1$ for all $\bfx^\beta\in I.$
\\
Define 
$
M_s = \big\{ \beta \in \widetilde{E} \, : \, \Vert \beta \Vert = s\big\}
$
for  $s\geq r$.
Assume by contradiction  $\deg( \varphi( \bfx^\beta)) = \Vert\alpha + \beta \Vert \leq r-2 $ for some minimal generator $\bfx^\beta\in E.$
In particular, $\beta \in M_r \cup M_{r+1}$ and $ \alpha+ \beta\in U \subseteq  \N^3$.

If $\beta \in M_r$ then 
 $\omega(\alpha)  = \alpha_1 + \alpha_2 + \alpha_3\leq -2$, hence
$M_r + \alpha, M_{r+1} + \alpha$ are disjoint from $\widetilde{E}$.
It follows that $M_r +\alpha \subseteq \N^3$, otherwise we can use the points of $M_r +\alpha$ and $M_{r+1} +\alpha$ to connect $\beta+\alpha$ to a point in $\Z^3\setminus \N^3$, contradicting the hypothesis that $U$ is a bounded connected component.
However, considering the points $(r, 0,0), (1, r-1, 0) \in M_r$ we deduce that $ \alpha_1 \geq -1,\alpha_2, \alpha_3 \geq 0,$ contradicting $\omega(\alpha) \leq -2$.
If $\beta \in M_{r+1}$ then 
 $\omega(\alpha) \leq -3$,
and we can apply the same argument using  
$M_{r+1} + \alpha, M_{r+2} + \alpha$
and the points $(r+1, 0,0), (0,r+1,0)$.

This verifies  the claim of Step 1. 
It follows, by the non-socle assumption on $\varphi$, that $\varphi(E) \subseteq (N_{r-1}) \subseteq S/E$ where $$
N_{r-1} = \Span_\kk\left( xyz^{r-3}, xz^{r-2}, y^{r-1},y^{r-2}z, \ldots, z^{r-1}\right)
$$
are the non-socle monomials of degree $r-1$.
The annihilators of these monomials are
\begin{equation}\label{EquationAnnihilator}
\begin{aligned}
\ann_{S/E}\left(xyz^{r-3}\right) &= \left(x,y,z^2\right)\\
\ann_{S/E}\left(\bfx^\beta\right) &= \left(x,y^2,yz,z^2\right)
\qquad
&
\text{ for }
\bfx^\beta =  xz^{r-2}, y^{r-1},\ldots, y^2z^{r-3}\\
\ann_{S/E}\left(\bfx^\beta\right) &= \mm^2
\qquad
&
\text{ for }
\bfx^\beta = yz^{r-2}, z^{r-1}.
\end{aligned}
\end{equation}
Moreover, we necessarily have $\omega(\alpha) = -2 $ or $-1$.

\underline{Step 2:} Suppose that $\omega(\alpha) = -2$. 
It follows that 
$0 \ne\varphi(\bfx^\beta) \in N_{r-1}$ for some  generator $\bfx^\beta \in E_{r+1}$;
choose   $\bfx^\beta$ to be the smallest in the lexicographic order.
By \eqref{EquationAnnihilator} we have $\varphi(\bfx^\beta z) \ne 0$.
If $x$ or $y$ divide $ \bfx^\beta$ then 
 $ \varphi(\bfx^\beta x^{-1} z)\ne 0$ or 
$  \varphi(\bfx^\beta y^{-1} z) \ne 0$, 
contradicting the choice of $\bfx^\beta$, 
hence $\bfx^\beta = z^{r+1}$. 
In particular, $\alpha_1 \in \{ 0, 1\}$.
If $\alpha_1 = 0 $,
then by \eqref{EquationAnnihilator} we get 
$y\varphi(\bfx^\gamma), z\varphi(\bfx^\gamma)\ne 0$
for all  generators $\bfx^\gamma \in E_{r+1}$ with $x \nmid \bfx^\gamma$.
We obtain
$$
 \varphi(z^{r+1}) \ne 0
\Rightarrow
 \varphi(yz^{r+1}) \ne 0
\Rightarrow
 \varphi(yz^{r}) \ne 0
\Rightarrow
\cdots
\Rightarrow
 \varphi(y^{r+1}z) \ne 0
\Rightarrow
 \varphi(y^{r+1}) \ne 0
$$
hence $\varphi$ restricts to an embedding of vector spaces 
$$
\Span_\kk\left( y^{r+1}, y^rz, \ldots, z^{r+1}\right) \hookrightarrow \Span_\kk\left( y^{r-1}, y^{r-2}z, \ldots, z^{r-1}\right)
$$
which is obviously impossible. 
Therefore  $\alpha_1 = 1$ and there are  2 possibilities for $\varphi$,
forced by the choice of $\varphi(z^{r+1}) \in \{ xyz^{r-3}, xz^{r-2} \}$.
These are the  maps $\varphi = \varphi_U$ with $\omega(\alpha) = -2 $ that are associated to the connected components
\begin{align*}
U &= \big\{ (1,1,r-3), (1,1,r-2) \big\} \subseteq \big(\widetilde{E} + (1,1,-4) \big) \setminus \widetilde{E},\\
U &= \big\{ (1,1,r-3), (1,1,r-2), (1,0,r-2), (1,0,r-1) \big\} \subseteq \big(\widetilde{E} + (1,0,-3) \big) \setminus \widetilde{E}.
\end{align*}

\underline{Step 3:} Suppose that $\omega(\alpha) = -1$. 
It follows that 
$0\ne \varphi(\bfx^\beta) \in N_{r-1}$ for some  generator $\bfx^\beta \in E_{r}$;
choose   $\bfx^\beta$ to be the smallest in the lexicographic order.
Since 
$ \varphi(\bfx^\beta z) \ne 0$  by  \eqref{EquationAnnihilator},
we have
$$
\text{if }\,
 x^3 \mid \bfx^\beta
 \Rightarrow
  \varphi(\bfx^\beta x^{-1} z)
   \ne 0,
   \quad
\text{if }\,
 x^2y \mid \bfx^\beta 
 \Rightarrow
  \varphi(\bfx^\beta y^{-1} z)
   \ne 0,
   \quad
\text{if }\,
 xy^3 \mid \bfx^\beta 
 \Rightarrow
  \varphi(\bfx^\beta y^{-1} z)
   \ne 0,
$$
yielding  either $\bfx^\beta = x^2z^{r-2}$ or $\bfx^\beta = xy^2z^{r-3}$.

If $\bfx^\gamma = x^2z^{r-2}\ne 0$, then we claim that $ x \mid \varphi( \bfx^\beta )$.
Suppose otherwise, and recall $y$ does not annihilate  monomials of $N_{r-1}$ that are not divisible by $x$.
We get
\begin{align*}
 \varphi(x^2z^{r-2}) \ne 0
& \Rightarrow
 \varphi(x^2yz^{r-2}) \ne 0
\Rightarrow
 \varphi(x^2yz^{r-3}) \ne 0
\Rightarrow
 \varphi(x^2y^2z^{r-3}) \ne 0
 \\
& \Rightarrow
 \varphi(xy^2z^{r-3}) = x^{-1}y^2z^{-1} \varphi(x^2z^{r-2}),
 \text{ contradiction.}
\end{align*}
We get 2 possibilities for $\varphi$,
forced by the choice of $\varphi(x^2z^{r-2}) \in \{ xyz^{r-3}, xz^{r-2} \}$.

If  $\bfx^\beta = xy^2z^{r-3}$, 
then we claim that $ z^{r-3} \mid \varphi(\bfx^\beta )$.
Suppose otherwise, then $x\nmid \varphi(\bfx^\beta )$.
Using again the fact that $y$ does not annihilate  monomials of $N_{r-1}$ that are not divisible by $x$
we deduce
\begin{align*}
\varphi( xy^2z^{r-3})\ne 0 
&\Rightarrow
\varphi( xy^3z^{r-3})\ne 0 
\Rightarrow
\varphi( xy^3z^{r-4})\ne 0 
\Rightarrow
\cdots
\Rightarrow
\varphi( xy^{r-1}z)\ne 0 
\\
&\Rightarrow
\varphi( xy^{r-1}) = y^{r-3}z^{-r+3} \varphi( xy^2z^{r-3}),
 \text{ contradiction.}
  \end{align*}
 On the other hand, we claim that $x \varphi(\bfx^\beta) = 0$.
 Suppose otherwise, then $\varphi( xy^2z^{r-3})\in \{ yz^{r-1}, z^{r-1}\}$ by \eqref{EquationAnnihilator},
 and in either case we reach a contradiction
\begin{align*}
x\varphi( xy^2z^{r-3})\ne 0 
&\Rightarrow
\varphi( x^2y^2z^{r-3})\ne 0 
\Rightarrow
\varphi( x^2yz^{r-3})\ne 0 
\Rightarrow
\varphi( x^2yz^{r-2})\ne 0 
\Rightarrow
\varphi( x^2z^{r-2})\ne 0 
\\
&\Rightarrow
\varphi( x^2z^{r-2}) = xy^{-2}z \varphi( xy^2z^{r-3}),
 \text{ contradiction.}
  \end{align*}
We get 3 possibilities for $\varphi$,
forced by the choice of $\varphi(xy^2z^{r-3}) \in \{ xyz^{r-3}, xz^{r-2}, y^2 z^{r-3} \}$.
In conclusion, we
have 5  maps $\varphi = \varphi_U$ with $\omega(\alpha) = -1 $,  associated to the connected components
\begin{align*}
U &= \big\{ (1,1,r-3), (1,1,r-2) \big\} \subseteq \big(\widetilde{E} + (-1,1,-1) \big) \setminus \widetilde{E},\\
U &= \big\{ (1,1,r-3), (1,1,r-2), (1,0,r-2), (1,0,r-1) \big\} \subseteq \big(\widetilde{E} + (-1,0,0) \big) \setminus \widetilde{E},\\
U &= \big\{ (1,1,r-3), (1,1,r-2) \big\} \subseteq \big(\widetilde{E} + (0,-1,0) \big) \setminus \widetilde{E},\\
U &= \big\{ (1,1,r-3), (1,1,r-2), (1,0,r-2), (1,0,r-1) \big\} \subseteq \big(\widetilde{E} + (0,-2,1) \big) \setminus \widetilde{E},\\
U &= \big\{ (0,r,0), (0,r-1,0), (0,r-1,1), \ldots, (0,3,r-4), (0,2,r-3) \big\} 
\subseteq \big(\widetilde{E} + (-1,0,0) \big) \setminus \widetilde{E}.
\end{align*}
\end{proof}

\begin{thm}\label{TheoremCounterexamples}
Let $r\in \N$ with $r\geq 3$ and 
 $d = {r+2\choose 3} + r+3$. 
The ideal 
$$
J =  x^2 (x,y,z)^{r-2} + xy( y,z)^{r-2} + (xz^r) + y(y,z)^r +(z^{r+2}) \subseteq S
$$
satisfies 
$[J] \in \Hilb^d\AA^3$ and
$
\dim_\kk T(J) > \dim_\kk T(E(d)).$
\end{thm}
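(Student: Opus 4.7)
The plan is to deduce the theorem in three steps. First, verify $[J] \in \Hilb^d \AA^3$ by computing the Hilbert function of $S/J$. Second, show that $\dim_\kk \soc(T(J)) = \bigl(\binom{r+2}{2}+1\bigr)\bigl(\binom{r+1}{2}+1\bigr)$, matching $\dim_\kk \soc(T(E(d)))$ computed in the proof of Proposition \ref{PropositionTangentSpaceLd}. Third, exhibit at least eight non-socle tangent vectors for $J$; this will yield
$$
\dim_\kk T(J) \;\geq\; \Bigl(\binom{r+2}{2}+1\Bigr)\Bigl(\binom{r+1}{2}+1\Bigr) + 8 \;>\; \dim_\kk T(E(d)).
$$

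For Step 1, a direct monomial count using the five summands of $J$ shows that the standard monomials of $S/J$ are all monomials of degree $\leq r-1$, the $r+2$ monomials $xz^{r-1}, y^r, y^{r-1}z, \ldots, z^r$ in degree $r$, and $z^{r+1}$ in degree $r+1$, totalling $\binom{r+2}{3} + (r+2) + 1 = d$. For Step 2, a similar direct analysis yields $\dim_\kk(J/\mm J) = \binom{r+2}{2}+1$ and $\dim_\kk \soc(S/J) = \binom{r+1}{2}+1$, matching the analogous invariants of $E(d)$; the isomorphism \eqref{EquationSocleTangentSpace} then delivers the desired equality of socle dimensions for the tangent spaces.

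Step 3 is the heart of the argument. By Proposition \ref{PropositionBasis} and Remark \ref{RemarkBasisSocle}, it suffices to exhibit eight distinct pairs $(\alpha, U)$ in which $U$ is a bounded connected component of $(\tilde J + \alpha) \setminus \tilde J$ with $|U| \geq 2$. My candidates are: for each $\alpha \in \{(-1,0,0),\, (0,-1,0),\, (0,0,-2)\}$ take $U = \{(0,0,r),\,(0,0,r+1)\}$; for $\alpha = (0,0,-2)$ take additionally $U = \{(1,0,r-2),\,(1,0,r-1)\}$ and a further (larger) connected component containing $(0,1,r-2)$ and $(0,1,r-1)$; and for each $\alpha \in \{(-1,-1,0),\, (-1,0,-1),\, (0,-1,-1)\}$ take the connected component containing $\{(0,0,r-1),\,(0,0,r),\,(0,0,r+1)\}$. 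For every such candidate one checks, using the explicit shape of $\tilde J$, that each coordinate neighbour of every point of $U$ either lies in $\tilde J$ or is not in $\tilde J + \alpha$, confirming that $U$ is indeed a connected component of size $\geq 2$. Pairwise distinctness of the eight components, and in particular of the three components listed for $\alpha = (0,0,-2)$, is established by the same kind of inspection.

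The hard part is Step 3: although each individual verification is elementary, the case analysis is intricate because one must distinguish maximally connected components from arbitrary connected subsets, and one must be careful not to allow any component to escape into $\Z^3 \setminus \N^3$. A full determination of $\dim_\kk T(J)$ via a case analysis organised by $\omega(\alpha)$, in the spirit of the proof of Proposition \ref{PropositionTangentSpaceLd}, would reveal many more than eight non-socle components, but the bound of eight is all that the theorem requires.
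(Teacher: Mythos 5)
Your overall strategy is the paper's: match the socle dimensions of $T(J)$ and $T(E(d))$, then beat the $7$ non-socle basis vectors of $T(E(d))$ (Proposition \ref{PropositionTangentSpaceLd}) by producing at least $8$ non-socle bounded connected components for $J$. Steps 1 and 2 are fine. The gap is in Step 3: at least four of your eight candidates are not bounded connected components, because the component containing your listed points escapes into $\Z^3\setminus\N^3$ and is therefore unbounded (hence contributes nothing, by Proposition \ref{PropositionBasis} and the Remark following the definitions). Concretely: for $\alpha=(0,-1,-1)$, the point $(0,-1,r+1)=(0,0,r+2)+\alpha$ lies in $(\tilde J+\alpha)\setminus\tilde J$ since $z^{r+2}\in J$, and it is adjacent to $(0,0,r+1)$; for $\alpha=(-1,0,-1)$ the same happens via $(-1,0,r+1)=(0,0,r+2)+\alpha$; for $\alpha=(-1,-1,0)$ via $(-1,0,r)=(0,1,r)+\alpha$ with $yz^{r}\in J$. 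Your fifth candidate for $\alpha=(0,0,-2)$ also fails: the component containing $(0,1,r-2)$ is the strip $\{(0,b,c): b\ge 1,\ c \ge 0,\ r-1\le b+c\le r\}$, which contains $(0,r,0)$, and $(0,r,0)$ is adjacent to $(0,r,-1)=(0,r,1)+(0,0,-2)$ with $y^{r}z\in J$. So only the four candidates of the shapes $U=\{(0,0,r),(0,0,r+1)\}$ (for $\alpha=(-1,0,0),(0,-1,0),(0,0,-2)$) and $U=\{(1,0,r-2),(1,0,r-1)\}$ (for $\alpha=(0,0,-2)$) survive, which is not enough.

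The fix is to look at more multidegrees rather than more components per multidegree. The paper takes $\mathcal S=\{x^2z^{r-2},xyz^{r-2},xz^{r},yz^{r},z^{r+2}\}$ and $\mathcal T=\{xz^{r-2},z^{r}\}$: each $\bfx^\beta\in\mathcal S$ is the unique minimal generator of $J$ dividing $z\bfx^\beta$, and each $\bfx^\tau\in\mathcal T$ has $\ann_{S/J}(\bfx^\tau)=(x,y,z^2)$, so every pair $(\beta,\tau)$ yields a non-socle map sending $\bfx^\beta\mapsto\bfx^\tau$ and all other generators to $0$. This gives ten bounded components, all equal to $\{(1,0,r-2),(1,0,r-1)\}$ or $\{(0,0,r),(0,0,r+1)\}$ but sitting in ten distinct multidegrees $\alpha=\tau-\beta$, including ones such as $(1,-1,-2)$, $(1,0,-4)$, $(-2,0,2)$, $(-1,-1,2)$ that your list omits. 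As written, your argument establishes only $\dim_\kk T(J)\ge \dim_\kk\soc(T(J))+4$, which does not exceed $\dim_\kk T(E(d))$.
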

\begin{proof}
The number of generators of $J$ is ${r \choose 2} + (r-1) + 1 + (r+1) +1  = {r+2 \choose 2} + 1$.
We have 
\begin{align*}
\soc\left( \frac{S}{J}\right) = \, & x^2 \cdot \Span_\kk( x^{r-3}, x^{r-2} y, \ldots, z^{r-3}) \oplus xy \cdot\Span_\kk( y^{r-3},  \ldots, z^{r-3})\\
&  \oplus \Span_\kk ( xz^{r-1}, y^r, y^{r-1}z, \ldots, yz^{r-1}, z^{r+1} ) 
\end{align*}
giving $\dim_\kk \soc(S/J ) = {  r+1 \choose 2} +1$.
By \eqref{EquationSocleTangentSpace} 
we deduce  
$
\dim_\kk  \soc\big(T(J)\big) = \big( {r+2 \choose 2} + 1 \big)\big( {r+1 \choose 2} + 1 \big).
$
In order to complete the proof,
by Proposition  \ref{PropositionTangentSpaceLd},
it suffices to exhibit at least 8 non-socle maps.

Consider the sets  $\mathcal{S} = \{x^2z^{r-2}, xy z^{r-2}, xz^r, yz^r, z^{r+2}\} \subseteq J$ and $\mathcal{T}=\{xz^{r-2}, z^{r}\} \subseteq S/J$.
Each $\bfx^\beta \in  \mathcal{S} $ has the property that $\bfx^\beta$ is the only monomial of $J$ dividing $z\bfx^\beta $ properly.
For each $\bfx^\tau \in \mathcal{T}$ we have $\ann_{S/J}(\bfx^\beta)  = (x,y,z^2)$.
It follows that for every $\bfx^\beta \in \mathcal{S}, \bfx^\tau \in \mathcal{T}$ 
there is a non-socle map $\varphi \in T(J)$ such that 
$\varphi(\bfx^\beta) = \bfx^\tau$ and 
$\varphi(\bfx^\gamma) = 0$ for every generator $\bfx^\gamma$ of $J$ with 
 $\bfx^\gamma \ne  \bfx^\beta$.
 These are the 10 basis maps $\varphi_U$ associated to the following connected components 
 $ U \subseteq \big(\widetilde{J} + \alpha \big) \setminus \widetilde{J}$
 \begin{align*}
U &= \big\{ (1,0,r-2), (1,0,r-1) \big\} 
\quad
&\text{for } \,
 \alpha = (-1,0,0), (0,-1,0), (0,0,-2), (1,-1,-2), (1,0,-4),
\\
U &= \big\{ (0,0,r), (0,0,r+1) \big\}
\quad 
&\text{for } \,
 \alpha =  (-2,0,2), (-1,-1,2), (-1,0,0), (0,-1,0), (0,0,-2).
\end{align*}
\end{proof}

\begin{remark}
For  $r=3, d= 16$, Theorem \ref{TheoremCounterexamples} recovers the counterexample of \cite[Theorem 2.3]{Sturmfels}.
\end{remark}

\begin{remark}
Using arguments similar  to Proposition \ref{PropositionTangentSpaceLd} and Theorem \ref{TheoremCounterexamples},
it is possible to prove that  for every $r \in \N, r\geq 3$ and 
$2\leq i \leq r-1$, the ideal 
$$
J =  x^2 (x,y,z)^{r-2} + xy( y,z)^{r-2} + (xz^r) + y(y,z)^r +(z^{r+i}) \subseteq S
$$
satisfies 
$[J] \in \Hilb^d\AA^3$   and
$
\dim_\kk T(J)> \dim_\kk T(E(d))$,
where $d = {r+2\choose 3} + r + i +1$.
\end{remark}

We conclude the paper by  addressing the following question of Sturmfels \cite[Problem 2.4.a]{Sturmfels}.

\begin{question}\label{QuestionSturmfels}
Is the maximum dimension of the tangent space to  $\Hilb^{d} \AA^3$ attained at an initial monomial ideal of the generic configuration of $d$ points?
\end{question}

We point out that $E(d)$ is the initial ideal of a generic configuration of $d$ points in $\AA^3$ with respect to the lexicographic order, 
as  follows e.g. from \cite[Theorem 1.2]{ConcaSidman},
so Question \ref{QuestionSturmfels} is a relaxation of Conjecture \ref{BIconj2}.
However, Question \ref{QuestionSturmfels} too has a negative answer.

\begin{prop}\label{PropositionCounterexampleSturmfels}
Assume $\text{char} \, \kk =0$. 
Question \ref{QuestionSturmfels}
has a  negative answer for $d= 39$.
\end{prop}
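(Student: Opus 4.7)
The plan is to answer Question \ref{QuestionSturmfels} for $d=39$ by a direct, computer-assisted comparison, made feasible by the reductions already available in the paper. First, by Lemma \ref{LemmaGinReduction} and Remark \ref{RemarkCharacteristicZero}, the search for the maximum of $\dim_\kk T(I)$ over $[I]\in\Hilb^{39}\AA^3$ reduces to strongly stable ideals (Lemma \ref{LemmaBorelExchangeChar0}). There are $39098$ such ideals of colength $39$, too many to enumerate comfortably. Since $39\le\binom{8}{3}$, Corollary \ref{Corollary34} applied with $r=6$ shows that any Borel-fixed ideal containing $x^p$ with $p\le\frac{19}{4}$, i.e.\ with $p\le 4$, already satisfies $\dim_\kk T(I)\le\dim_\kk T(\mm^6)$. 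Thus, provided the maximum in question exceeds $\dim_\kk T(\mm^6)$, it is enough to carry out the search over the $2654$ strongly stable ideals of colength $39$ that do not contain $x^4$.

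Next, for each of these $2654$ ideals I would compute $\dim_\kk T(I)$ from the combinatorial basis of Proposition \ref{PropositionBasis}, and record the maximum value $M$ together with an optimizer $I^\star$. On the other side of the inequality, I would enumerate the initial monomial ideals of the vanishing ideal $J\subseteq S$ of a random configuration of $39$ points in $\AA^3$, as the term order varies; all such initial ideals share the same Hilbert function and correspond to the maximal cones of the Gr\"obner fan of $J$. By traversing this fan in a computer algebra system and computing $\dim_\kk T(\cdot)$ for each initial ideal encountered, I would obtain the maximum $M'$. The proposition is then established by verifying the strict inequality $M > M'$, and the specific pair $(I^\star, M)$ exhibits the concrete failure for $d=39$.

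The main obstacle is the second enumeration: whereas running through the $2654$ strongly stable ideals is immediately tractable, the Gr\"obner fan of $39$ generic points in $\AA^3$ may have a large number of maximal cones, so its traversal must be performed with care, and the chosen configuration must be verified to be sufficiently generic that its list of initial ideals agrees with the one for the truly generic configuration. Once both finite lists of tangent space dimensions are produced, the concluding comparison is purely mechanical and the sought-for strict inequality exhibits an $I^\star$ that is Borel-fixed but is not an initial ideal of any generic configuration of $39$ points, settling Question \ref{QuestionSturmfels} negatively.
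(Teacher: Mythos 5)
Your first half follows the same route as the paper: reduce to strongly stable ideals via the generic initial ideal, prune the search with the estimate $\dim_\kk T(K)\le (2p+1)d$ where $x^p\in K$, and locate the maximizer computationally. Two corrections there. The paper prunes at $x^3$, not $x^4$: if $x^3\in K$ then $\dim_\kk T(K)\le 7\cdot 39=273<327=\dim_\kk T(E(39))$, which \emph{unconditionally} excludes such ideals from being maximizers; your appeal to Corollary \ref{Corollary34} only yields the bound $\dim_\kk T(\mm^6)=588$, and your proviso that the maximum exceeds $\dim_\kk T(\mm^6)$ is false for $d=39$ (the maximum is in the low $300$s), so the pruning as you justify it does not determine the Borel-fixed maximum — it is harmless only if all you want is a lower bound. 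Also, the count $2654$ refers to the strongly stable ideals with $x^3\notin K$.

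The genuine gap is in your second half. The paper never enumerates the initial ideals of the generic configuration: it identifies the \emph{unique} strongly stable maximizer $J$ and then proves abstractly that $J$ cannot be an initial ideal of the generic configuration, by homogenizing to $\P^3$, showing that $\mathrm{sat}(H)=JS[t]$ together with strong stability pins down the homogeneous initial ideal $H$ uniquely, and deriving the contradiction $z^3>y^2$ and $y^2>z^3$ from the Conca--Sidman theorem that such an $H$ must be a $\prec$-segment. Your replacement — traversing the Gr\"obner fan of a \emph{random} $39$-point configuration and taking the maximum $M'$ over its initial ideals — is not backed by a certificate that this set of initial ideals agrees with that of the \emph{generic} configuration, which is what Question \ref{QuestionSturmfels} is about. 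Indeed, every initial ideal of the generic configuration is a gin and hence Borel-fixed, whereas a fixed random configuration has many non-Borel-fixed initial ideals and its Gr\"obner fan may subdivide or coarsen relative to the generic one; so the $M'$ you compute is a priori the wrong quantity. (Feasibility of the fan traversal is a secondary concern.) To close the gap you need some theoretical criterion, such as the paper's saturation-plus-segment argument, that decides whether the specific maximizer $I^\star$ is an initial ideal of the truly generic configuration.
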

\begin{proof}
Let $I \subseteq S $ be the ideal of a generic configuration of 39 points in $ \AA^3$.
Suppose by contradiction that there exists a term order $<$ on $S$ such that the initial ideal $J = \mathrm{in}_<(I)$ attains the maximum tangent space dimension for 
$\Hilb^{39} \AA^3$.
Since $g\cdot I$ is also  the ideal of a generic configuration  for general $g \in \GL_3$, 
we have $J =  \mathrm{gin}_<(I)$, 
therefore $J$ is strongly stable by Lemmas \ref{LemmaGinReduction} and \ref{LemmaBorelExchangeChar0}.

We note that $x^3 \notin J$, otherwise the estimates in the proof of Theorem \ref{TheoremGlobalBound} 
yield the contradiction
$$
\dim_\kk T(J) \leq (2p+1) \dim_\kk \frac{S}{I} \leq 7\cdot 39 < 327 = \dim_\kk T\big(E(39)\big).
$$ 
Using the computer algebra system \emph{Macaulay2} \cite{Macaulay2} and the package \emph{Strongly stable ideals and Hilbert polynomials} \cite{AL},
we compute $\dim_\kk T(K)$ for all strongly stable ideals in $\Hilb^{39} \AA^3$ satisfying $x^3\notin K$
and find only one with maximum tangent space dimension, yielding 

\begin{align*}
J = \Big( & x^5, x^4y, x^4z, x^3y^2, x^3yz, x^3z^2, x^2y^3, x^2y^2z, x^2yz^2, x^2z^3, \\
				& xy^4, xy^3z, xy^2z^2, xyz^3, y^5, y^4z, y^3z^2, y^2z^3,  xz^5, yz^5, z^7\Big).
\end{align*}

Let $\overline{I} \subseteq S[t] = \kk[x,y,z,t]$ denote the homogenization of $I$, so $\overline{I}$ is the ideal of a generic configuration of 39 points in $\P^3$.
Define a term order $\prec$ on $S[t]$ by setting  $\bfx^\alpha t^a \succ \bfx^\beta t^b$ if $\bfx^\alpha > \bfx^\beta$ or $\bfx^\alpha = \bfx^\beta$ and $a>b$,
and let $H = \mathrm{in}_{\prec}\big(\overline{I}\big)$. 
As before for $J$, we have $H =\mathrm{gin}_{\prec}\big(\overline{I}\big)$, and in particular $H$ is strongly stable (with respect to the ordering of the variables $x\succ y \succ z\succ t$).
By \cite[Proposition 15.24]{Eisenbud} the saturation of $H$ is given by $\mathrm{sat}(H) = H : (x,y,z,t)^\infty = H : t^\infty$.
For any $f \in I$, we have $\mathrm{in}_\prec \big( \overline{f} \big) =  t^p\cdot \mathrm{in}_< ( {f}) $ for some $p\in \N$, and this implies that 
$J S[t] \subseteq H : t^\infty =\mathrm{sat}(H)$. 
On the other hand, both $J S[t]$ and $\mathrm{sat}(H)$ are saturated 1-dimensional homogeneous ideals of degree 39, which implies that 
$  \mathrm{sat}(H)= J S[t]$.

Computing the Hilbert function of $JS[t]$ and using the well-known formula for the Hilbert function of general points in projective space,
we determine that the vector space $  \frac{\mathrm{sat}(H)}{H}$ has dimension 1 and is concentrated in degree 5. 
In other words, $  \frac{\mathrm{sat}(H)}{H} = \Span_\kk\big( \bfx^\alpha\big)$ for some  $\bfx^\alpha\in J$ with $\deg\big( \bfx^\alpha\big)=5$.
Since  $H$ is strongly stable,
the only possibility is $\bfx^\alpha = y^2 z^3$, yielding 

\begin{align*}
H = \Big( & x^5, x^4y, x^4z, x^3y^2, x^3yz, x^3z^2, x^2y^3, x^2y^2z, x^2yz^2, x^2z^3, xy^4, \\
				&   xy^3z, xy^2z^2, xyz^3, y^5, y^4z, y^3z^2, y^2z^4, y^2z^3t,   xz^5, yz^5, z^7\Big).
\end{align*}
By \cite[Theorem 1.2]{ConcaSidman}
the ideal $H$ is a $\prec$-segment,
that is, 
whenever $\bfx^\beta \in H$ and $\bfx^\gamma \succ \bfx^\beta$ with $\deg\big( \bfx^\gamma\big) = \deg\big( \bfx^\beta\big)$,
we have $\bfx^\gamma \in H$ too.
In particular:
\begin{align*}
xyz^3 \in H, \, xy^3t \notin H &\Rightarrow  xyz^3 \succ xy^3t \Rightarrow  xyz^3 > xy^3 \Rightarrow z^3 > y^2,\\
y^2z^3t \in H, \, z^6 \notin H &\Rightarrow y^2z^3t \succ z^6 \Rightarrow y^2z^3 > z^6 \Rightarrow y^2 > z^3, 
\end{align*}
generating a contradiction and concluding the proof.
\end{proof}

\subsection*{Acknowledgements} 
The authors would like to thank David Eisenbud,  Mark Haiman, Diane Maclagan, and  the anonymous referees for several helpful suggestions.
The first author was partially supported by an NSERC PGSD scholarship.
The second author was partially supported by NSF Grant No. 1440140, while he was a Postdoctoral Fellow at the Mathematical Sciences Research Institute in Berkeley.

\end{document}